\date{}
\newtheorem{theorem}{Theorem}[section]
\newtheorem{definition}[theorem]{Definition}
\newtheorem{lemma}[theorem]{Lemma}
\newtheorem{corollary}[theorem]{Corollary}
\newtheorem{proposition}[theorem]{Proposition}
\newtheorem{example}[theorem]{Example}
\renewcommand{\footnotemark}{\rule{0mm}{0mm}}
\newenvironment{proof}{\vskip2pt{\it Proof.}}{$\hfill \Box$\par\bigskip}
\begin{document}

\author{Fatma Kaynarca and H. Melis Tekin Akcin}

\title{\bf On symmetric property of skew polynomial rings}

\maketitle

\begin{abstract}
Symmetric rings were introduced by Lambek to extend usual commutative ideal theory in noncommutative rings. In this paper, we study symmetric rings over which Ore extensions are symmetric. A ring $R$  is  called \emph{strongly $\sigma$-symmetric} if the skew polynomial ring $R[x;\sigma]$ is symmetric.
We consider some properties and extensions of strongly $\sigma$-symmetric rings. Then we show the relationship between strongly $\sigma$-symmetric rings and other classes of rings. We next argue the polynomial extensions over strongly $\sigma$-symmetric rings. Moreover, we prove that if $R$ is a $\sigma$-rigid ring, then $R[x]/(x^n)$ is a strongly $\bar{\sigma}$-symmetric ring, where $\sigma$ is an endomorphism of $R$, $(x^n)$ is the ideal generated by $x^n$ and $n$ is a positive integer; and that if the classical left quotient ring $Q(R)$ of $R$ exists, then $R$ is $\sigma$-symmetric if and only if $Q(R)$ is strongly $\bar{\sigma}$-symmetric.

$\noindent $

\textbf{Mathematics Subject Classification (2010).} Primary 16W20, 16U80; Secondary 16S36.

\textbf{Keywords.} Strongly $\sigma$-symmetric ring, (strongly) symmetric ring, $\sigma$-rigid ring,  skew polynomial ring, Dorroh extension.
\end{abstract}

\section{Introduction}

\indent\indent Throughout this paper, $R$ denotes an associative ring with identity and $\sigma$ denotes a nonzero and non-identity endomorphism, unless otherwise stated. We denote the polynomial ring with an indeterminate $x$ over $R$ by $R[x]$ and the degree of $f(x)\in R[x]$ by $deg\, f$.

Recall that a ring $R$ is called \emph{reduced} if it has no nonzero nilpotent elements. In \cite{C}, Cohn introduced the notion of a reversible ring as a generalization of commutativity. A ring $R$ is called \emph{reversible}, if whenever $a,b\in R$ satisfy $ab=0$, then $ba=0$. Anderson and Camillo \cite{AC} used the notation $ZC_{2}$ for reversible rings. While Krempa and Niewieczerzal \cite{KN} used the term $C_{0}$ for it.
This classes of rings have also found application in K\"{o}the's conjecture. Cohn proved that K\"{o}the's Conjecture is true for the class of reversible rings.

A stronger condition than 'reversibility' was defined by Lambek in \cite{L}. A ring $R$ is called {\it symmetric} if $abc=0$ implies $acb=0$ for $a, b, c\in R$. Anderson and Camillo \cite{AC} used the notation $ZC_{3}$ for symmetric rings.
Lambek also proved in \cite[Proposition 1]{L} that a ring $R$ is symmetric if and only if $r_1r_2\cdots r_n=0$ implies $r_{\sigma(1)}r_{\sigma(2)}\cdots r_{\sigma(n)}=0$ for any permutation $\sigma$ of the set $\{1,2,\cdots, n\}$ and $r_i\in R$ for all $i=1,\ldots, n$.
It is clear that commutative rings are symmetric and symmetric rings are reversible. But, in general, reversible rings need not be symmetric by \cite[Examples 5]{M} and symmetric rings need not be commutative by \cite[Example II.5]{AC}.

According to Krempa \cite{Kr}, an endomorphism $\sigma$ of a ring $R$ is called {\it rigid} if $a\sigma(a)=0$ implies $a=0$ for $a\in R$. A ring $R$ is called {\it $\sigma$-rigid} if there exists a rigid endomorphism $\sigma$ of $R$. Note that any rigid endomorphism of a ring is a monomorphism and $\sigma$-rigid rings are reduced by \cite[Propositon 5]{HKKrigid}.

Rege and Chhawchharia \cite{RC} introduced the notion of an Armendariz ring which is a generalization of a reduced ring. A ring $R$ is called {\it Armendariz} if whenever any polynomials $f(x)=a_0+a_1x+\cdots +a_mx^{m}, g(x)=b_0+b_1x+\cdots +b_nx^{n}\in R[x]$ satisfy $f(x)g(x)=0$, then $a_ib_j=0$ for each $i$ and $j$. The name `Armendariz' was given since it was Armendariz who showed that a reduced ring satisfies this condition.
Armendariz rings are also helpful to understand the relation between the annihilators of the ring and the annihilators of the polynomial ring $R[x]$.

For a ring $R$ equipped with an endomorphism $\sigma: R\rightarrow R$, a \emph{skew polynomial ring} $R[x;\sigma]$  over the coefficient ring $R$ (also called an \emph{Ore extension of endomorphism type}) is the ring obtained by giving the polynomial ring over $R$ with the new multiplication $xr=\sigma(r)x$ for all $r\in R$.
This property makes the study of Ore extensions of endomorphism type more difficult than that of polynomial rings.
Let $\sigma:R\rightarrow R$ be an endomorphism of a ring $R$. For any skew polynomial ring $R[x;\sigma]$ of $R$, we have $\sigma(1)=1$ since $1.x=x.1=\sigma(1)x$.

Armendariz property of a ring is extended to skew polynomial rings by considering the polynomials in $R[x;\sigma]$ instead of $R[x]$ (see \cite{HKR} and \cite{HKK} for more details). For an endomorphism $\sigma$ of a ring $R$, $R$ is called \emph{$\sigma$-Armendariz} (resp., \emph{$\sigma$-skew Armendariz}) if for $p(x)=\sum^{m}_{i=0}a_{i}x^{i}$ and $q(x)=\sum^{n}_{j=0}b_{j}x^{j}$ in $R[x;\sigma]$, $p(x)q(x)=0$ implies $a_{i}b_{j}=0$ (resp., $a_{i}\sigma^{i}(b_{j})=0$) for all $0\leq i\leq m$ and $0\leq j\leq n$.

Kim and Lee showed in \cite[Example 2.1]{KL} that polynomial rings over reversible rings need not be reversible. Following \cite{YL}, Yang and Liu consider reversible rings over which polynomial rings are reversible and called them \emph{strongly reversible}. According to Bell \cite{B}, a one-sided ideal $I$ of a ring $R$ is said to have the \emph{insertion-of factors-principle} (or simply \emph{IFP}) if $ab\in I$ implies $aRb\subseteq I$ for $a, b\in R$. Hence a ring $R$ is called an {\it IFP ring} if the zero ideal of $R$ has the IFP. Also note that polynomial rings over IFP rings need not be IFP by \cite[Example 2]{HLS}.
Following \cite{KLY}, Kwak et. al. called a ring $R$ \emph{strongly IFP} if $R[x]$ has IFP. Huh et. al. also proved in \cite[Example 3.1]{HKKL}  that polynomial rings are over symmetric rings need not be symmetric. In \cite{EA}, Eltiyeb and Ayoub investigated symmetric rings over which polynomial rings are symmetric.

Another approach to generalize reversible and IFP  properties is obtained by considering the properties on Ore extensions of endomorphism type.
Following \cite{JKKL}, Jin et. al. called a ring $R$ {\it strongly $\sigma$-skew reversible} if the skew polynomial ring $R[x;\sigma]$ is reversible and in \cite{BHKKL}, Ba\c{s}er et. al. called a ring $R$ {\it strongly $\sigma$-IFP} if the skew polynomial ring $R[x;\sigma]$ has $IFP$.

Motivated by the above, in this paper, we introduce a new class of rings which is called {\it strongly $\sigma$-symmetric} ring to extend the symmetry property on skew polynomials. The following diagram describes all known implications. Also note that no other implications in the diagram hold in general.

\begin{center}
\begin{tikzpicture}[->,>=stealth]
    \matrix (dag)  [matrix of nodes,%
    nodes={rectangle,draw},
    column sep={4cm,between origins},
    row sep={1cm,between origins},
    ampersand replacement=\&] { \& |(1)| $\sigma$-rigid  \&   \\
                                   |(11)| strongly $\sigma$-symmetric   \& |(12)| strongly symmetric     \&   |(13)| symmetric \\
                                   |(21)| strongly $\sigma$-skew reversible  \& |(22)| strongly reversible \&  |(23)| reversible \\
                                   |(31)| strongly $\sigma$-IFP  \& |(32)| strongly IFP  \&  |(33)| IFP \\
                                   \&    |(4)| Abelian \&  \\  };
    \draw (1) to (11);
    \draw (1) to (12);
    \draw (1) to (13);
    \draw (11) to (12);
    \draw (12) to (13);
    \draw (11) to (21);
    \draw (12) to (22);
    \draw (13) to (23);
    \draw (21) to (22);
    \draw (21) to (31);
    \draw (22) to (32);
    \draw (31) to (32);
    \draw (22) to (23);
    \draw (32) to (33);
    \draw (23) to (33);
    \draw (31) to (4);
    \draw (32) to (4);
    \draw (33) to (4);
  \end{tikzpicture}
\end{center}

In section 2, we examine the relationships between several classes of rings and strongly $\sigma$-symmetric rings and prove some statements about the links given in the above diagram. We also provide some examples of strongly $\sigma$-symmetric rings and counterexamples to several naturally raised situations.

In Section 3, as suggested by the literature, there is considerable interest whether strongly $\sigma$-symmetric property is preserved under extensions. We first examine when polynomial rings over strongly $\sigma$-symmetric rings are again strongly $\bar{\sigma}$-symmetric. Next, we prove that if $R$ is a $\sigma$-rigid ring, then $R[x]/(x^n)$ is a strongly $\bar{\sigma}$-symmetric ring, where $\sigma$ is an endomorphism of $R$, $(x^n)$ is the ideal generated by $x^n$ and $n$ is a positive integer.

In Theorem \ref{Quotientring}, we prove that if the classical left quotient ring $Q(R)$ of $R$ exists, then $R$ is $\sigma$-symmetric if and only if $Q(R)$ is strongly $\bar{\sigma}$-symmetric. In Proposition \ref{delta}, we obtain the results proved in \cite[Proposition 3.6]{BHKKL} and \cite[Proposition 3.8]{JKKL}  without the condition '$\sigma(u)=u$' for any central regular element $u$. Hence, we get a direct generalization of \cite[Lemma 3.2]{HKKL} without any restriction on the endomorphism $\sigma$. Moreover, several known results relating to symmetric rings can be obtained as corollaries of our results.

\section{Strongly $\sigma$-symmetric rings and  related properties }

\indent \indent
The present work is devoted to study ring-theoretical properties of strongly $\sigma$-symmetric rings.
Our focus in this section is to introduce the concept of a strongly $\sigma$-symmetric ring for an endomorphism $\sigma$ and investigate its properties. Firstly, we begin with the following example which illustrates the need to introduce the symmetry property of skew polynomial rings.

\begin{example}\label {sss}
{\rm   Consider the ring $R=\mathbb{Z}_{2}\oplus \mathbb{Z}_{2}$ with the usual addition and multiplication. Then we know that $R$ is symmetric since $R$ is reduced. Let $\sigma:R\rightarrow R$ be an endomorphism of $R$ defined by $\sigma((a,b))=(b,a)$. For $p(x)=(1,1)$, $q(x)=(1,0)$ and  $r(x)=(0,1)x$ in $R[x;\sigma]$ we have $p(x)q(x)r(x)=0$, but $p(x)r(x)q(x)=(1,1)(0,1)x(1,0)=(1,1)(0,1)\sigma((1,0))x=(0,1)x\neq (0,0)$. Thus $R[x;\sigma]$ is not symmetric.}
\end{example}
\indent Motivated by this example, we can give the following definition.

\begin{definition} \label{def}
{\rm  Let $R$ be a ring and $\sigma$ be an endomorphism of $R$. Then $R$ is called  {\it strongly $\sigma$-symmetric} if $R[x;\sigma]$ is symmetric.}
\end{definition}

Every  strongly $\sigma$-symmetric ring is symmetric, but the converse is not true by Example~\ref{sss}.  Any $\sigma$-rigid ring (i.e., $R[x;\sigma]$ is reduced) is clearly strongly $\sigma$-symmetric. However, there exists a  strongly $\sigma$-symmetric ring which is not $\sigma$-rigid by \cite[Example 3.4]{GM}. It is clear that any domain $R$ with a monomorphism $\sigma$ is strongly $\sigma$-symmetric since $R$ is $\sigma$-rigid. Note that every subring $S$ of a strongly $\sigma$-symmetric ring with $\sigma(S)\subseteq S$ is also strongly $\sigma$-symmetric. Any strongly $\sigma$-symmetric ring is clearly strongly $\sigma$-IFP, but the converse is not true in general, by the following example.

\begin{example}\label{IFPnotsym}
\rm Consider the polynomial ring $R=\mathbb{Z}[x]$. Let $\sigma: R\rightarrow R$ be an endomorphism of $R$ defined by $\sigma(f(x))=f(0)$ for $f(x)\in \mathbb{Z}[x]$.
Then by \cite[Example 2.3]{BHKKL}, we know that $R$ is strongly $\sigma$-IFP. On the other hand, for the polynomials $p(y)=1+x$, $q(y)=xy$ and $r(y)=x \in \mathbb{Z}[x][y;\sigma]$ we have $p(y)q(y)r(y)=0$, but $p(y)r(y)q(y)\neq 0$. Therefore, $R$ is not strongly $\sigma$-symmetric.
\end{example}

Following Kwak \cite[Definition 2.1]{K}, an endomorphism $\sigma$ of a ring
$R$ is called {\it right (left) symmetric} if whenever $abc=0$ for $a, b, c\in R$, then
$ac\sigma(b)=0$ ($\sigma(b)ac=0$). A ring $R$ is called {\it right (left) $\sigma$-symmetric}
if there exists a right (left) endomorphism $\sigma$ of $R$. $R$ is called \emph{$\sigma$-symmetric} if it is both right and left $\sigma$-symmetric.

\begin{proposition} \label{sym}
Let $R$ be a ring and $\sigma$ be an endomorphism of $R$. If $R$ is strongly $\sigma$-symmetric, then $R$ is both symmetric and $\sigma$-symmetric.
\end{proposition}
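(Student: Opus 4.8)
The plan is to establish the two conclusions separately, since ``symmetric'' concerns $R$ itself while ``$\sigma$-symmetric'' concerns the interaction of $\sigma$ with products, and both should fall out of the symmetry of $R[x;\sigma]$ by feeding it cleverly chosen low-degree polynomials. Symmetry of $R$ is immediate: the map $r\mapsto r$ embeds $R$ into $R[x;\sigma]$ as the constant polynomials (a ring monomorphism, since $\sigma$ never intervenes among constants), so if $abc=0$ with $a,b,c\in R$ then the same equation holds in $R[x;\sigma]$, and symmetry of $R[x;\sigma]$ forces $acb=0$ there, hence in $R$.

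For right $\sigma$-symmetry I would exploit the twisted multiplication $xr=\sigma(r)x$ to manufacture a $\sigma$. Starting from $abc=0$ in $R$, I form the three elements $a,\ b,\ cx\in R[x;\sigma]$ and observe that $a\,b\,(cx)=(abc)x=0$. Applying the three-term symmetry of $R[x;\sigma]$ to swap the last two factors gives $a\,(cx)\,b=0$; but $a(cx)b=ac(xb)=ac\sigma(b)x$, so comparing coefficients of $x$ yields $ac\sigma(b)=0$, which is exactly the right-symmetric condition.

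For left $\sigma$-symmetry the cleanest route is to bootstrap from what I have just proved rather than to repeat the polynomial manipulation. Once $ac\sigma(b)=0$ is known and $R$ is already symmetric, the three ring elements $a,\ c,\ \sigma(b)$ have vanishing product, so every permutation of them also has vanishing product (this is Lambek's permutation form of symmetry, quoted in the introduction); in particular $\sigma(b)\,a\,c=0$, which is the left-symmetric condition. Hence $\sigma$ is both right and left symmetric, so $R$ is $\sigma$-symmetric.

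The main obstacle, and the point that dictates the order of the argument, is bookkeeping of where $x$ sits relative to the constants: because $x$ standing to the left of a string of constants applies $\sigma$ to all of them, a naive attempt to read off $\sigma(b)ac$ directly from a zero product in $R[x;\sigma]$ tends to produce $\sigma(b)\sigma(a)\sigma(c)$ instead. Placing the single $x$ adjacent to exactly the factor $b$ that should receive $\sigma$, as in the right-symmetric step, is therefore the delicate choice; once that is secured, invoking the already-established symmetry of $R$ to relocate $\sigma(b)$ to the front avoids repeating the twisting argument and sidesteps this difficulty entirely.
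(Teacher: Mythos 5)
Your proposal is correct and matches the paper's proof essentially step for step: symmetry of $R$ is inherited from the constants, the choice $p(x)=a$, $q(x)=b$, $r(x)=cx$ yields $ac\sigma(b)=0$ for right $\sigma$-symmetry, and the left condition $\sigma(b)ac=0$ is then obtained by commuting a zero product in the symmetric (hence reversible) ring $R$, exactly as the paper does. Your appeal to Lambek's permutation property is just a mild restatement of the paper's appeal to reversibility, not a different argument.
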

\begin{proof}
Let $R$ be a strongly $\sigma$-symmetric ring, then it is clear that $R$ is symmetric. Let $abc=0$ for $a, b, c\in R$ and consider the polynomials $p(x)=a$, $q(x)=b$ and $r(x)=cx$ in $R[x;\sigma]$. Then $p(x)q(x)r(x)=0$.
Since $R$ is strongly $\sigma$-symmetric, we have $p(x)r(x)q(x)=0$ and so $ac\sigma(b)=0$. Then we obtain that $R$ is right $\sigma$-symmetric. Moreover, we have $\sigma(b)ac=0$ since $R$ is reversible. Therefore, $R$ is  left $\sigma$-symmetric.
\end{proof}

Let $\sigma$ be an endomorphism of a ring $R$. Then $R$ is called \emph{$\sigma$-skew quasi Armendariz} \cite[Definition 2.1]{HKL1} if whenever $p(x)R[x;\sigma]q(x)=0$ for $p(x)=\sum^{m}_{i=0}a_ix^{i}$ and $q(x)=\sum^{n}_{j=0}b_{j}x^{j}\in R[x;\sigma]$, then $a_{i}R\sigma^{i} (b_{j})=0$ for all $0\leq i\leq m$ and $0\leq j\leq n$. Also note that any $\sigma$-skew Armendariz ring is clearly $\sigma$-skew quasi Armendariz, when $\sigma$ is an epimorphism by \cite{HKL1}.
But, $\sigma$-skew quasi Armendariz rings need not be $\sigma$-skew Armendariz even if $\sigma$ is an automorphism by \cite[Example 2.2(1)]{HKL1}. In the following theorem, we show that over strongly $\sigma$-symmetric rings these concepts are equivalent.

\begin{theorem}
Let $R$ be a ring and $\sigma$ be an epimorphism of $R$. If $R$ is strongly $\sigma$-symmetric, then $R$ is $\sigma$-skew Armendariz if and only if $R$ is $\sigma$-skew quasi Armendariz.
\end{theorem}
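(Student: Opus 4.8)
The plan is to treat the two implications separately, since one of them is already available. The forward implication, that a $\sigma$-skew Armendariz ring is $\sigma$-skew quasi Armendariz, holds for any epimorphism $\sigma$ by the fact recalled above from \cite{HKL1}; it uses neither symmetry nor any hypothesis beyond $\sigma$ being onto. So the real content is the converse: assuming that $R$ is strongly $\sigma$-symmetric and $\sigma$-skew quasi Armendariz, I want to conclude that $R$ is $\sigma$-skew Armendariz.

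For the converse I would start from polynomials $p(x)=\sum_{i=0}^{m}a_i x^i$ and $q(x)=\sum_{j=0}^{n}b_j x^j$ in $R[x;\sigma]$ with $p(x)q(x)=0$, aiming at $a_i\sigma^i(b_j)=0$ for all $i,j$. The key observation is that $R[x;\sigma]$ is symmetric by hypothesis, and every symmetric ring is semicommutative (has IFP): if $uv=0$ in a symmetric ring, then $(uv)r=0$ forces $urv=0$ by the defining condition $abc=0\Rightarrow acb=0$, so $u\,R[x;\sigma]\,v=0$. Applying this with $u=p(x)$ and $v=q(x)$ promotes the single relation $p(x)q(x)=0$ to the annihilator relation $p(x)\,R[x;\sigma]\,q(x)=0$.

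Once this is in place, the $\sigma$-skew quasi Armendariz hypothesis applies directly to $p(x)\,R[x;\sigma]\,q(x)=0$ and yields $a_i R\,\sigma^i(b_j)=0$ for all $i$ and $j$. Specializing the central factor to $1\in R$ gives $a_i\sigma^i(b_j)=0$, which is exactly the $\sigma$-skew Armendariz conclusion. I expect the only step requiring genuine attention to be the passage from $p(x)q(x)=0$ to $p(x)\,R[x;\sigma]\,q(x)=0$; everything else is bookkeeping. This is precisely where the full strength of symmetry is used, since reversibility alone would not obviously upgrade the product to an $R[x;\sigma]$-annihilator, so the main (and rather modest) obstacle is simply to confirm that the semicommutative property of the symmetric ring $R[x;\sigma]$ suffices, after which the two Armendariz-type notions coincide.
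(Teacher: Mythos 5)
Your proposal is correct and follows essentially the same route as the paper: both pass from $p(x)q(x)=0$ to $p(x)q(x)r(x)=0$, use symmetry of $R[x;\sigma]$ to swap the last two factors and obtain $p(x)\,R[x;\sigma]\,q(x)=0$, then invoke the $\sigma$-skew quasi Armendariz hypothesis and specialize the middle factor to $1$. Your framing of this step as ``symmetric implies IFP'' is just a named version of the identical inline manipulation in the paper's proof, so there is no substantive difference.
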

\begin{proof}
It is enough to prove that $R$ is $\sigma$-skew Armendariz when $R$ is $\sigma$-skew quasi Armendariz. Let $p(x)q(x)=0$, where $p(x)=\sum^{m}_{i=0}a_{i}x^{i}$ and $q(x)=\sum^{n}_{j=0}b_{j}x^{j}$ in $R[x;\sigma]$. It is clear that $p(x)q(x)r(x)=0$ for all $r(x)\in R[x;\sigma]$. Since $R$ is strongly $\sigma$-symmetric, we have $p(x)r(x)q(x)=0$ for all $r(x)\in R[x;\sigma]$. Then $p(x)R[x;\sigma]q(x)=~0$ and so $a_{i}R\sigma^{i}(b_{j})=0$ for all $0\leq i\leq m$ and $0\leq j\leq n$ since $R$ is $\sigma$-skew quasi Armendariz. Therefore, $a_{i}\sigma^{i}(b_{j})=0$ for all $0\leq i\leq m$ and $0\leq j\leq n$, as required.
\end{proof}

Note that $\sigma$-skew Armendariz rings and strongly $\sigma$-symmetric rings are independent of each other by the following examples.

\begin{example}\label{skewarm}
\rm (1) We consider the ring $R=\mathbb{Z}_{2}[x]$ and the endomorphism $\sigma:R\rightarrow R$ is defined by $\sigma(f(x))=f(0)$ for $f(x)\in \mathbb{Z}_{2}[x]$. Then $R$ is $\sigma$-skew Armendariz by \cite[Example 5]{HKK}. But $R$ is not strongly $\sigma$-symmetric. Indeed, for the polynomials $p(y)=1, q(y)=(\bar{1}+x)y, r(y)=x$ in $\mathbb{Z}_2[x][y;\sigma]$, we have $p(y)q(y)r(y)=0$. But $p(y)r(y)q(y)=x(\bar{1}+x)y\neq 0$ and hence, $R$ is not strongly $\sigma$-symmetric.\\
\rm (2) We consider the ring $S$ given by $S=\left\{\left( \begin{array}{cc} \bar{a} & \bar{b} \\ \bar0 & \bar{a} \\ \end{array} \right)\mid~\bar{a}, \bar{b}\in \mathbb{Z}_{4}  \right\}$. By \cite[Example 14]{HKK}, $S$ is not $I_{S}$-skew Armendariz, where $I_{S}$ is the identity map of $S$. On the other hand, it can be seen that $S$ is strongly $I_{S}$-symmetric.

%However $S$ is strongly $I_{S}$-symmetric. Indeed, assume that $p(x)q(x)r(x)=0$ for the polynomials
%$$p(x)=\sum^{m}_{i=0}\left(\begin{array}{cc} \bar{a_i} & \bar{b_i} \\ 0 & \bar{a_i} \\ \end{array} \right)x^{i},
 % q(x)=\sum^{n}_{j=0}\left(\begin{array}{cc} \bar{c_j} & \bar{d_j} \\ 0 & \bar{c_j} \\ \end{array} \right)x^{j},
 % r(x)=\sum^{l}_{k=0}\left(\begin{array}{cc} \bar{e_k} & \bar{f_k} \\ 0 & \bar{e_k} \\ \end{array} \right)x^{k}$$
%in $S[x;I_{S}]$ where $ \bar{a_i}, \bar{b_i}, \bar{c_j}, \bar{d_j}, \bar{e_k}, \bar{f_k}\in \mathbb{Z}_{4}$ for $0\leq i\leq m, 0\leq j\leq n, 0\leq k\leq l$. %We obtain that
%\begin{align}
%0=& p(x)q(x)r(x)\notag \\
% =& \sum^{m+n+l}_{t=0}\left( \sum_{t=s+k} \left( \sum_{s=i+j}
% \left(\begin{array}{cc} \bar{a_i} & \bar{b_i} \\ 0 & \bar{a_i} \\ \end{array} \right)
%{\bar\sigma}^{i}\left(\begin{array}{cc} \bar{c_j} & \bar{d_j} \\ 0 & \bar{c_j} \\ \end{array} \right)
%{\bar\sigma}^{s}\left(\begin{array}{cc} \bar{e_k} & \bar{f_k} \\ 0 & \bar{e_k} \\ \end{array} \right)
%\right) \right)x^{t} \notag \\
%=& \sum^{m+n+l}_{t=0}\left( \sum_{t=s+k} \left( \sum_{s=i+j}
%\left( \begin{array}{cc} \overline{a_ic_je_k} & \overline{a_ic_jf_k+a_id_je_k+b_ic_je_k} \\ 0 & \overline{a_ic_je_k} \\ \end{array} \right)
%\right) \right)x^{t} \notag \\
%=& p(x)r(x)q(x)\notag
%\end{align}
%Therefore $S$ is strongly $I_{S}$-symmetric.
\end{example}

\begin{theorem}\label{sigmaA}
Let $R$ be a $\sigma$-Armendariz ring. Then the followings are equivalent:
\begin{enumerate}
  \item [(1)] $R$ is right $\sigma$-symmetric.
  \item [(2)] $R$ is symmetric.
  \item [(3)] $R$ is strongly $\sigma$-symmetric.
\end{enumerate}
\end{theorem}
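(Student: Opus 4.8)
The plan is to establish the cycle $(1)\Rightarrow(2)\Rightarrow(3)\Rightarrow(1)$. The implication $(3)\Rightarrow(1)$ is free: by Proposition~\ref{sym} a strongly $\sigma$-symmetric ring is $\sigma$-symmetric, hence in particular right $\sigma$-symmetric. Everything else rests on one elementary consequence of the $\sigma$-Armendariz hypothesis, call it $(\ast)$: for $u,v\in R$, if $u\sigma(v)=0$ then $uv=0$. To get $(\ast)$ I would apply the defining property to $p(x)=ux$ and $q(x)=v$ in $R[x;\sigma]$, where $p(x)q(x)=u\sigma(v)x=0$ forces the coefficient product $uv=0$. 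No injectivity or reversibility is needed for this.

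For $(1)\Rightarrow(2)$, suppose $abc=0$. Right $\sigma$-symmetry gives $ac\sigma(b)=0$, and then $(\ast)$ with $u=ac$, $v=b$ yields $acb=0$; thus $R$ is symmetric. This direction uses only the easy observation $(\ast)$.

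The core is $(2)\Rightarrow(3)$: assuming $R$ symmetric and $\sigma$-Armendariz, I must show $R[x;\sigma]$ is symmetric, i.e. $fgh=0\Rightarrow fhg=0$ for $f=\sum a_ix^i$, $g=\sum b_jx^j$, $h=\sum c_kx^k$. The first step reduces a vanishing triple product to its coefficients, and the trick is to keep the degree-$0$ factor on the left so that no twist intervenes. From $f(gh)=0$ the $\sigma$-Armendariz property gives $a_i(gh)_n=0$ for all $i,n$; fixing $i$, this says $a_i\cdot(gh)=0$ in $R[x;\sigma]$, so by associativity $(a_ig)h=0$ with $a_ig=\sum_j(a_ib_j)x^j$. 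A second application of $\sigma$-Armendariz yields $(a_ib_j)c_k=0$, i.e. $a_ib_jc_k=0$ for all $i,j,k$ (note that compatibility is not needed for this extraction). Since $R$ is symmetric, $a_ic_kb_j=0$ for all $i,j,k$.

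The remaining task, reassembling these into $fhg=0$, is the step I expect to be the main obstacle: the coefficient of $x^m$ in $fhg$ is $\sum_{i+k+j=m}a_i\sigma^i(c_k)\sigma^{i+k}(b_j)$, so I must promote the untwisted vanishing $a_ic_kb_j=0$ to the twisted one $a_i\sigma^i(c_k)\sigma^{i+k}(b_j)=0$. This is exactly $\sigma$-compatibility, which I would extract from the hypotheses rather than assume. From $(\ast)$ and $\sigma$ being a homomorphism, $rs=0$ forces $\sigma(r)\sigma(s)=\sigma(rs)=0$ and hence $\sigma(r)s=0$ by $(\ast)$; applying this to the relation $sr=0$ (valid since a symmetric ring is reversible) gives $\sigma(s)r=0$, and reversing once more yields $r\sigma(s)=0$. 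Iterating shows $rs=0$ implies $r\sigma^{t}(s)=0$ for every $t\ge0$. Feeding $a_ic_kb_j=0$ through this, first distributing $\sigma^i$ across the product $c_kb_j$ and then absorbing the extra $\sigma^k$ onto the last factor, makes every term of the $x^m$-coefficient vanish, so $fhg=0$. This closes the cycle. The whole difficulty is thus concentrated in recognizing that symmetry supplies precisely the reversibility needed to upgrade the one-directional $(\ast)$ to full $\sigma$-compatibility, which is what powers the reassembly.
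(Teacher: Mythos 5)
Your proof is correct, and its skeleton --- the cycle $(1)\Rightarrow(2)\Rightarrow(3)\Rightarrow(1)$, with $(3)\Rightarrow(1)$ obtained from Proposition~\ref{sym} (equivalently, from the polynomials $a$, $b$, $cx$) --- coincides with the paper's. The genuine difference lies in how the two hard steps are discharged: the paper cites \cite[Proposition 1.3(ii)]{HKR} for the step $ac\sigma(b)=0\Rightarrow acb=0$ and \cite[Theorem 3.6(i)]{HKR} for all of $(2)\Rightarrow(3)$, whereas you prove both from the definition. Your observation $(\ast)$ (that $u\sigma(v)=0$ forces $uv=0$, by applying the $\sigma$-Armendariz condition to $ux$ and $v$) is precisely the content of the first citation; and your proof of $(2)\Rightarrow(3)$ --- upgrading $(\ast)$ to full compatibility $rs=0\Rightarrow r\sigma^{t}(s)=0$ via the reversibility inherent in symmetry, extracting $a_ib_jc_k=0$ from $fgh=0$ by a double application of the Armendariz condition (correctly keeping the degree-zero factor on the left so that no twist intervenes), and then reassembling $fhg=0$ term by term --- is in effect a self-contained proof of the second citation. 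I checked the delicate points: the derivation of $r\sigma(s)=0$ from $rs=0$ (via $\sigma(r)s=0$, reversibility, and $(\ast)$ again) is sound, as is the two-step twisting $a_ic_kb_j=0\Rightarrow a_i\sigma^{i}(c_k)\sigma^{i}(b_j)=0\Rightarrow a_i\sigma^{i}(c_k)\sigma^{i+k}(b_j)=0$. What the paper's route buys is brevity; what yours buys is a self-contained argument that also makes explicit a fact the paper never states for this setting: a symmetric $\sigma$-Armendariz ring is automatically $\sigma$-compatible, which is the exact analogue of the compatibility hypothesis that the paper must assume separately in Theorem~\ref{THM1 23.10.18} when it weakens $\sigma$-Armendariz to $\sigma$-skew Armendariz.
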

\begin{proof}
$(1)\Rightarrow (2)$ Suppose that $R$ is right $\sigma$-symmetric. Let $a, b, c\in R$ such that $abc=0$. Since $R$ is right $\sigma$-symmetric, we have $ac\sigma(b)=0$. Then $acb=0$ by \cite[Proposition 1.3(ii)]{HKR}. Hence, $R$ is symmetric.\\
$(2)\Rightarrow (3)$ It is clear by \cite[Theorem 3.6(i)]{HKR}.\\
$(3)\Rightarrow (1)$ Suppose that $R$ is strongly $\sigma$-symmetric and $abc=0$ for $a, b, c\in R$. Let $p(x)=a, q(x)=b$ and $r(x)=cx$ in $R[x;\sigma]$. Then $p(x)q(x)r(x)=0$ and hence, we get $p(x)r(x)q(x)=0$ since $R$ is strongly $\sigma$-symmetric. Therefore, $ac\sigma(b)=0$ and we obtain that $R$ is right $\sigma$-symmetric.
\end{proof}

Notice that the condition ``$R$ is $\sigma$-Armendariz'' in Theorem~\ref{sigmaA} is not superfluous by Example~\ref{skewarm}(1). In \cite[Example 1.9]{HKR}, it is proved that $R=\mathbb{Z}_{2}[x]$ is not $\sigma$-Armendariz. Also note that $R$ is a commutative domain and hence, we get $R$ is symmetric and right $\sigma$-symmetric for any endomorphism.

\begin{corollary}(\cite[Proposition 3.4]{HKKL})
Let $R$ be an Armendariz ring, then $R$ is symmetric if and only if $R[x]$ is symmetric.
\end{corollary}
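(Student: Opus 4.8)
The plan is to deduce this corollary as the special case $\sigma = I_R$ of Theorem~\ref{sigmaA}, where $I_R$ denotes the identity endomorphism of $R$; here we relax the standing convention that $\sigma$ be non-identity. First I would check that the hypothesis matches: when $\sigma = I_R$ we have $R[x;I_R] = R[x]$, so the defining condition of a $\sigma$-Armendariz ring, namely $p(x)q(x)=0 \Rightarrow a_i b_j = 0$ for $p(x)=\sum_{i} a_i x^i$ and $q(x)=\sum_{j} b_j x^j$ in $R[x;\sigma]$, becomes precisely the condition that $R$ be Armendariz. Thus an Armendariz ring is exactly an $I_R$-Armendariz ring, and Theorem~\ref{sigmaA} applies with $\sigma = I_R$.

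Next I would translate the two relevant statements of Theorem~\ref{sigmaA} into the $\sigma = I_R$ language. Statement (2) is unchanged: $R$ is symmetric. For statement (3), being strongly $I_R$-symmetric means by Definition~\ref{def} that $R[x;I_R]$ is symmetric, and since $R[x;I_R]=R[x]$ this is exactly the assertion that $R[x]$ is symmetric. One could equally use statement (1): right $I_R$-symmetric means $abc=0 \Rightarrow ac\,I_R(b)=acb=0$, which is again just symmetry, consistent with the equivalence $(1)\Leftrightarrow(2)$.

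Finally, invoking the equivalence $(2) \Leftrightarrow (3)$ of Theorem~\ref{sigmaA} for the endomorphism $I_R$ yields that $R$ is symmetric if and only if $R[x]$ is symmetric, which is the claim. The only point requiring care---and the sole obstacle---is verifying that the identity endomorphism is admissible, i.e. that the proof of Theorem~\ref{sigmaA} nowhere used that $\sigma$ is nonzero or non-identity. Inspecting that proof, each step (the appeals to \cite[Proposition 1.3(ii)]{HKR} and \cite[Theorem 3.6(i)]{HKR}, together with the polynomial-insertion argument placing $cx$ in the last slot) goes through verbatim for $\sigma = I_R$, so the specialization is legitimate and the corollary follows immediately.
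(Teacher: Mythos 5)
Your proposal is correct and matches the paper's intent exactly: the corollary is stated immediately after Theorem~\ref{sigmaA} precisely as its specialization to the identity endomorphism, under which $\sigma$-Armendariz, strongly $\sigma$-symmetric, and right $\sigma$-symmetric reduce to Armendariz, ``$R[x]$ symmetric,'' and symmetric, respectively. Your extra verification that the identity endomorphism is admissible (despite the paper's standing non-identity convention) is a sensible precaution and is consistent with the paper's own use of $I_S$ elsewhere, e.g.\ in Example~\ref{skewarm}(2).
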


The following lemma is clear by \cite[Lemma 2.3]{JKKL}, since any strongly $\sigma$-symmetric ring is strongly $\sigma$-skew reversible. For the sake of completeness, we include the statements.

\begin{lemma}\label{Lemma1-23.10.18}
Let $R$ be a strongly $\sigma$-symmetric ring. Then we have the following results:
\begin{enumerate}
\item $R$ is symmetric.
\item $\sigma$ is a monomorphism.
\item For any $a,b\in R$ and nonnegative integer $m$ and $n$, we have $a\sigma^{m}(b)=0\Leftrightarrow ab=0 \Leftrightarrow ba=0 \Leftrightarrow \sigma^{m}(b)\sigma^{n}(a)=0\Leftrightarrow \sigma^{n}(a)\sigma^{m}(b)=0$.
\item $R$ is abelian and $\sigma(e)=e$ for any $e^2=e\in R$.
\end{enumerate}
\end{lemma}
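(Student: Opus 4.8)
The plan is to prove the four items by exploiting the fact, noted just before the lemma, that a strongly $\sigma$-symmetric ring is strongly $\sigma$-skew reversible, so that \cite[Lemma 2.3]{JKKL} applies; but to keep the argument self-contained I would actually re-derive each item directly from the defining property that $R[x;\sigma]$ is symmetric, which is the cleanest route. Item (1) is immediate: taking constant polynomials $p(x)=a$, $q(x)=b$, $r(x)=c$ in $R[x;\sigma]$, the equation $abc=0$ in $R$ becomes $p(x)q(x)r(x)=0$, and symmetry of $R[x;\sigma]$ gives $p(x)r(x)q(x)=0$, i.e.\ $acb=0$. Item (4) follows from the standard fact that symmetric rings are abelian (reversible plus IFP forces idempotents to be central), and then $\sigma(e)=e$ for an idempotent $e$ is obtained by the same polynomial trick used in Proposition~\ref{sym}: one shows $e\sigma(e)$ and $(1-e)\sigma(e)$ behave correctly so that $\sigma(e)=e\sigma(e)e=e$.

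For the substantive content I would first establish item (3), the chain of annihilator equivalences, since items (2) and the remaining implications all drop out of it. The key inputs are Proposition~\ref{sym}, which tells us $R$ is both symmetric and $\sigma$-symmetric, together with symmetry of the skew polynomial ring. I would argue the equivalence $ab=0 \Leftrightarrow ba=0$ from reversibility (symmetric implies reversible). To bring $\sigma$ into play, I would use the skew polynomial trick: from $ab=0$ consider $p(x)=a$, $q(x)=bx$ so that $p(x)q(x)=a\sigma(b)\cdot$(wait, $a\cdot bx = ab\,x$? careful) --- the correct reading is $a\cdot(bx)=(ab)x=0$, and to produce $a\sigma^m(b)$ one instead multiplies in the other order, $bx\cdot a = b\sigma(a)x$, using reversibility to pass back and forth. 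Iterating $x r=\sigma(r)x$ and applying $\sigma$-symmetry repeatedly upgrades single applications of $\sigma$ to arbitrary powers $\sigma^m,\sigma^n$, which yields the full string of equivalences. The essential mechanism is that each equality $a\sigma^k(b)=0$ can be inserted as the vanishing of a two-term product of monomials in $R[x;\sigma]$, and then permuted or re-indexed using the symmetry of $R[x;\sigma]$ and the relation $\sigma(1)=1$.

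Item (2), that $\sigma$ is a monomorphism, I would extract from item (3): if $\sigma(a)=0$ then taking $b=a$, $m=1$ in the relation $a\sigma^m(b)=0\Leftrightarrow ab=0$ is not quite the right shape, so instead I would note that $\sigma(a)=0$ gives $1\cdot\sigma(a)=0$, hence by the equivalence (with $b=a$, $a$ replaced by $1$) $1\cdot a = a = 0$; alternatively, $\sigma$-rigidity-type reasoning via $a\sigma(a)=0\Rightarrow a=0$ combined with reducedness of symmetric rings closes it. The main obstacle I anticipate is bookkeeping in item (3): getting the powers of $\sigma$ to propagate cleanly through products of the form $\sum a_i x^i \cdot \sum b_j x^j$ requires care with the twisted multiplication $x r=\sigma(r)x$, and one must verify that symmetry of $R[x;\sigma]$ applied to genuinely multi-term polynomials (not just monomials) does not introduce cross terms that obstruct the coefficient-wise conclusion. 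Because $R$ is already known to be symmetric and $\sigma$-symmetric, however, the cross terms should be controllable, and the cleanest presentation is probably to cite \cite[Lemma 2.3]{JKKL} for item (3) while giving the short direct arguments for (1), (2), and (4).
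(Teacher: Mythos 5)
Your proposal is correct and takes essentially the same route as the paper: the paper's entire proof consists of the single observation that a strongly $\sigma$-symmetric ring is strongly $\sigma$-skew reversible (symmetry of $R[x;\sigma]$ implies its reversibility) followed by citing \cite[Lemma 2.3]{JKKL}, which is precisely the reduction you identify, and your direct monomial arguments (constant polynomials for (1), products like $(ax^m)(b)=a\sigma^m(b)x^m$ permuted by reversibility for (3), and specializing (3) for (2) and for $\sigma(e)=e$) simply unpack what that citation hides. One caveat: your parenthetical fallback for item (2) invoking ``reducedness of symmetric rings'' is false (symmetric rings need not be reduced, e.g.\ $\mathbb{Z}_4$), but this is harmless since your primary derivation of (2) from (3), via $1\cdot\sigma(a)=0\Rightarrow 1\cdot a=0$, is sound.
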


Following \cite{HM}, a ring $R$ with an endomorphism $\sigma$ is called \emph{$\sigma$-compatible} if for each $a, b\in R$, $ab=0\Leftrightarrow a\sigma(b)=0$. It is a well-known fact that if $R$ is a $\sigma$-compatible ring, then $\sigma$ is a monomorphism.

\begin{lemma}\label{Lemma2-23.10.18}
Let $R$ be a ring and $\sigma$ be an endomorphism of $R$. If $R$ is strongly $\sigma$-symmetric, then $R$ is $\sigma$-compatible.
\end{lemma}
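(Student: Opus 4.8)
The plan is to unwind the definition of $\sigma$-compatibility, namely the biconditional $ab=0 \Leftrightarrow a\sigma(b)=0$ for $a,b\in R$, and to verify each implication by applying the symmetry of $R[x;\sigma]$ to a suitable triple of polynomials built from the constant polynomials $a$, $b$ and the indeterminate $x$. The crucial device is the skew commutation rule $xb=\sigma(b)x$, which lets me convert a monomial product such as $a\cdot x\cdot b$ into $a\sigma(b)x$ and then read off coefficients.

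First I would treat the forward implication. Assuming $ab=0$, I form the product $a\cdot b\cdot x$ in $R[x;\sigma]$; since $ab=0$, this product vanishes. Because $R$ is strongly $\sigma$-symmetric, $R[x;\sigma]$ is symmetric, so interchanging the last two factors yields $a\cdot x\cdot b=0$. Rewriting the left-hand side via $xb=\sigma(b)x$ gives $a\sigma(b)x=0$, and comparing the coefficient of $x$ forces $a\sigma(b)=0$. As a sanity check, the same conclusion drops out of Proposition~\ref{sym}: right $\sigma$-symmetry applied to $ab\cdot 1=0$ gives $a\cdot 1\cdot\sigma(b)=0$.

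For the reverse implication I would run essentially the same argument backwards. Assuming $a\sigma(b)=0$, the product $a\cdot x\cdot b$ equals $a\sigma(b)x=0$. Applying the symmetry of $R[x;\sigma]$ once more to swap the last two factors gives $a\cdot b\cdot x=abx=0$, hence $ab=0$. Together the two implications establish $ab=0 \Leftrightarrow a\sigma(b)=0$, which is exactly $\sigma$-compatibility.

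I do not anticipate a genuine obstacle here; the only points requiring care are the bookkeeping that $a$ and $b$ are regarded as degree-zero polynomials while $x$ is the indeterminate, and the correct use of the skew relation $xb=\sigma(b)x$ (rather than $bx$). I would also remark that the statement is in fact the special case $m=1$ of the chain of equivalences in Lemma~\ref{Lemma1-23.10.18}(3), so the direct computation above may be viewed as an explicit, self-contained derivation of that instance.
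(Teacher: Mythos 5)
Your proof is correct, and it takes a genuinely different (more self-contained) route than the paper's. The paper disposes of this lemma in one line: since a symmetric ring is reversible, strongly $\sigma$-symmetric implies strongly $\sigma$-skew reversible, and then it invokes \cite[Corollary 2.4(1)]{JKKL}, which asserts that strongly $\sigma$-skew reversible rings are $\sigma$-compatible. You instead verify compatibility directly from the defining property of symmetry in $R[x;\sigma]$ applied to the polynomials $a$, $b$, $x$: from $ab=0$ you get $a\cdot b\cdot x=0$, swap the last two factors to obtain $a\cdot x\cdot b=a\sigma(b)x=0$, and read off $a\sigma(b)=0$; conversely, from $a\sigma(b)=0$ you get $a\cdot x\cdot b=0$, swap back to $a\cdot b\cdot x=abx=0$, and conclude $ab=0$. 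Both swaps are legitimate instances of the symmetric condition ($pqr=0\Rightarrow prq=0$ in $R[x;\sigma]$), and the coefficient comparisons are sound, so there is no gap. What your approach buys is an elementary argument that spares the reader a trip to the literature and isolates exactly which polynomials witness compatibility; what the paper's route buys is brevity and an explicit placement of the lemma inside the implication chain strongly $\sigma$-symmetric $\Rightarrow$ strongly $\sigma$-skew reversible $\Rightarrow$ $\sigma$-compatible, which also yields the longer string of equivalences in Lemma~\ref{Lemma1-23.10.18}(3), of which your computation is the case $m=1$, as you observe. One small caveat: your sanity check via Proposition~\ref{sym} covers only the forward implication $ab=0\Rightarrow a\sigma(b)=0$; the reverse implication genuinely requires the polynomial argument (or the cited reversibility result), so it cannot be dismissed as a corollary of right $\sigma$-symmetry alone.
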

\begin{proof}
If $R$ is strongly $\sigma$-symmetric, then $R$ is strongly $\sigma$-skew reversible and by \cite[Corollary 2.4(1)]{JKKL}, we get $R$ is $\sigma$-compatible.
\end{proof}

But $\sigma$-compatible rings need not be strongly $\sigma$-symmetric by \cite[Example 2.11]{JKKL}. In the following theorem, we show the relation between $\sigma$-compatible rings and strongly $\sigma$-symmetric rings.

\begin{theorem}\label{THM1 23.10.18}
Let $R$ be a ring and $\sigma$ be an endomorphism of $R$. Assume that $R$ is $\sigma$-skew Armendariz. Then $R$ is symmetric and $\sigma$-compatible if and only if $R$ is strongly $\sigma$-symmetric.
\end{theorem}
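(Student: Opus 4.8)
The plan is to prove the two implications separately, noting that one of them is essentially free from the machinery already developed. For the ``only if'' direction, suppose $R$ is strongly $\sigma$-symmetric. Then $R$ is symmetric by Proposition~\ref{sym} and $R$ is $\sigma$-compatible by Lemma~\ref{Lemma2-23.10.18}. Observe that this direction needs neither the $\sigma$-skew Armendariz hypothesis nor any further computation; it is an immediate consequence of the earlier results.

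The content of the theorem therefore lies in the ``if'' direction, and here my plan is to reduce to Theorem~\ref{sigmaA} rather than argue inside $R[x;\sigma]$ directly. First I would upgrade the $\sigma$-skew Armendariz hypothesis to genuine $\sigma$-Armendariz using $\sigma$-compatibility. Concretely, take $p(x)=\sum_i a_i x^i$ and $q(x)=\sum_j b_j x^j$ in $R[x;\sigma]$ with $p(x)q(x)=0$. The $\sigma$-skew Armendariz condition yields $a_i\sigma^i(b_j)=0$ for all $i,j$. Since $\sigma$-compatibility gives $a\sigma(c)=0 \Leftrightarrow ac=0$, applying it with $c=\sigma^{i-1}(b_j)$ and iterating strips the powers of $\sigma$ one at a time, so $a_i\sigma^i(b_j)=0 \Leftrightarrow a_ib_j=0$. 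Hence $a_ib_j=0$ for all $i,j$, i.e.\ $R$ is $\sigma$-Armendariz. Now $R$ is both $\sigma$-Armendariz and symmetric, so the implication $(2)\Rightarrow(3)$ of Theorem~\ref{sigmaA} applies and gives that $R$ is strongly $\sigma$-symmetric, completing the proof.

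The step I expect to be the only nontrivial one is the compatibility-to-Armendariz reduction above; once $R$ is shown to be $\sigma$-Armendariz, Theorem~\ref{sigmaA} does all the heavy lifting and there is no genuine obstacle. Should one instead want a self-contained argument that bypasses Theorem~\ref{sigmaA}, the hard part would be passing directly from $p(x)q(x)r(x)=0$ to coefficientwise relations such as $a_ib_jc_k=0$: grouping the product as $(p(x)q(x))\,r(x)$ produces coefficients of $p(x)q(x)$ that are \emph{sums} $\sum_{i+j=l}a_i\sigma^i(b_j)$ rather than individual products, so the Armendariz input cannot be applied termwise in one stroke. In that approach the symmetric property (to permute the three factors of a vanishing product) together with $\sigma$-compatibility (to move all powers of $\sigma$ to the outside and delete them) would be the tools needed to separate the mixed terms and then to reassemble $p(x)r(x)q(x)$ coefficientwise and show it vanishes.
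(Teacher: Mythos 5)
Your proof is correct, but it takes a genuinely different route from the paper's for the substantive direction. The forward implication is handled the same way in both (the paper invokes Lemma~\ref{Lemma2-23.10.18} together with the subring observation; your appeal to Proposition~\ref{sym} and Lemma~\ref{Lemma2-23.10.18} is equivalent). For the converse, the paper works directly inside $R[x;\sigma]$: from $p(x)q(x)r(x)=0$ it asserts the termwise relations $a_i\sigma^i(b_j)\sigma^{i+j}(c_k)=0$, converts them to $a_ib_jc_k=0$ by compatibility, permutes with symmetry to get $a_ic_kb_j=0$, and reinserts powers of $\sigma$ to conclude $p(x)r(x)q(x)=0$. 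The assertion in that first step is precisely the triple-product subtlety you flag in your closing paragraph: the $\sigma$-skew Armendariz hypothesis speaks only of two factors, so one must apply it twice, interleaved with compatibility --- first to $p(x)\bigl(q(x)r(x)\bigr)=0$, which after stripping the $\sigma$'s gives $a_iq(x)r(x)=0$ for each $i$, then to $\bigl(a_iq(x)\bigr)r(x)=0$ --- a point the paper leaves implicit. Your route avoids this issue altogether: compatibility upgrades $\sigma$-skew Armendariz to $\sigma$-Armendariz (your iteration of $ab=0\Leftrightarrow a\sigma(b)=0$ is exactly right), and then Theorem~\ref{sigmaA}(2)$\Rightarrow$(3) finishes. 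What this buys is a shorter argument with no polynomial manipulation at all; what it costs is self-containedness, since the paper's proof of Theorem~\ref{sigmaA}(2)$\Rightarrow$(3) is itself only a citation of \cite[Theorem 3.6(i)]{HKR}, whereas the paper's direct computation for the present theorem --- once the implicit double application of the skew Armendariz condition is spelled out --- stays internal to the paper.
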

\begin{proof}
It suffices to show the necessity by using Lemma \ref{Lemma2-23.10.18} and the fact that strongly $\sigma$-symmetric property is inherited by its subrings.
Suppose that $R$ is symmetric and $\sigma$-compatible. Let $p(x)q(x)r(x)=0$, where $p(x)=\sum^{m}_{i=0}a_{i}x^{i}$, $q(x)=\sum^{n}_{j=0}b_{j}x^{j}$ and $r(x)=\sum^{l}_{k=0}c_{k}x^{k}$ in $R[x;\sigma]$. Then we have $a_{i}\sigma^{i}(b_{j})\sigma^{i+j}(c_{k})=0$ for all $0\leq i\leq m, 0\leq j\leq n$ and $0\leq k\leq l$. Since $R$ is $\sigma$-compatible, we obtain $a_{i}b_{j}c_{k}=0$. Therefore, $a_{i}c_{k}b_{j}=0$ since $R$ is symmetric. Hence, $a_{i}\sigma^{i}(c_{k})\sigma^{i+j}(b_{j})=0$ for all $0\leq i\leq m, 0\leq j\leq n$ and $0\leq k\leq l$ and this implies that $p(x)r(x)q(x)=0$, as required.
\end{proof}

The conditions "$R$ is a $\sigma$-compatible ring" and "$R$ is $\sigma$-skew Armendariz ring" in Theorem~\ref{THM1 23.10.18} can not be dropped by the following examples.

\begin{example}\rm
Consider the polynomial ring $R=\mathbb{Z}_{2}[x]$ and the endomorphism  $\sigma:R\rightarrow R$ defined by $\sigma(f(x))=f(0)$ for $f(x)\in \mathbb{Z}_{2}[x]$. By \cite[Example 5]{HKK}, $R$ is $\sigma$-skew Armendariz and $R$ is symmetric since $R$ is a domain. On the other hand, $R$ is not strongly $\sigma$-symmetric by Example~\ref{skewarm}(1) and $R$ is not $\sigma$-compatible since $\sigma$ is not a monomorphism.
\end{example}

\begin{example} \label{noncomalg} \rm
We use the ring constructions in \cite[Example 3.1]{HKKL} and \cite[Example 2.5(1)]{JKKL}. Let $\mathbb{Z}_{2}$ be the ring of integers modulo 2 and $A=\mathbb{Z}_{2}[a_0, a_1, a_2, b_0, b_1, b_2,c]$ be the  the free algebra of polynomials with noncommuting indeterminates $a_0,a_1, a_2, b_0, b_1, b_2, c$ over $\mathbb{Z}_{2}$. Define an automorphism $\sigma$ of $A$ by
$$a_0,a_1, a_2, b_0, b_1, b_2, c\mapsto b_0,b_1, b_2, a_0, a_1, a_2, c$$
respectively. Let $B$ be the set of all polynomials with zero constant terms in $A$ and $I$ be the ideal of $A$ generated by
$$a_0b_0,~~b_0a_0,~~a_2b_2,~~b_2a_2,~~a_0a_0,~~a_2a_2,~~b_0b_0,~~b_2b_2$$
$$a_0rb_0,~~b_0ra_0,~~a_2rb_2,~~b_2ra_2,~~a_0ra_0,~~a_2ra_2,~~b_0rb_0,~~b_2rb_2,~~r_1r_2r_3r_4$$
$$ a_0b_1+a_1b_0,~~b_0a_1+b_1a_0,~~a_1b_2+a_2b_1,~~b_1a_2+b_2a_1,~~a_0a_1+a_1a_0,~~b_0b_1+b_1b_0,~~a_1a_2+a_2a_1,~~b_1b_2+b_2b_1$$
$$a_0b_2+a_1b_1+a_2b_0,~~b_0a_2+b_1a_1+b_2a_0,~~a_0a_2+a_1a_1+a_2a_0,~~b_0b_2+b_1b_1+b_2b_0$$
$$(a_0+a_1+a_2)r(b_0+b_1+b_2),~~(b_0+b_1+b_2)r(a_0+a_1+a_2),~~(a_0+a_1+a_2)r(a_0+a_1+a_2),~~(b_0+b_1+b_2)r(b_0+b_1+b_2)$$
for $r, r_1, r_2, r_3, r_4\in B$. Then clearly $B^{4}\subseteq I$. Set $R=A/I$. Since $\sigma(I)\subseteq I$, we can induce an automorphism $\bar\sigma$ of $R$ defined by $\bar\sigma(s+I)=\sigma(s)+I$ for $s\in A$. Note that $\sigma^{2}=1_R$, where $1_R$ denotes the identity endomorphism of $R$. By \cite[Example 2.5]{JKKL}, $R$ is $\bar\sigma$-compatible, but not strongly $\bar\sigma$-skew reversible. Therefore, $R$ is not strongly $\bar\sigma$-symmetric. Moreover, by \cite[Example 2.10(1)]{HKKL}, $R$ is not $\bar\sigma$-skew Armendariz.

\noindent Now we show that $R$ is symmetric. We shall call $f\in A$ a \emph{monomial} of degree $n$ if it is a product of exactly $n$ number of indeterminates, and let $H_{n}$ be the set of all linear combinations of monomials of degree $n$ over $\mathbb{Z}_{2}$. Then $H_n$ is finite for any $n$. Moreover, the ideal $I$ is homogeneous (i.e., if $\sum^t_{i=1}r_{i}\in I$ with $r_i\in H_i$ then every $r_i$ is in I) by the construction.\\
\textbf{Claim:} If $f_1g_1h_1\in I$ for $f_1, g_1, h_1\in H_1$ then $f_1h_1g_1\in I$.\\
\textbf{Proof of the claim:}
Based on the construction of $I$ we have the following cases when $f_1g_1h_1\in I$ for $f_1, g_1, h_1\in H_1$; \\
\emph{Case 1:}
$$(f_1=a_0, g_1=b_0, h_1=r),~~(f_1=a_0, g_1=r, h_1=b_0),~~(f_1=b_0, g_1=a_0, h_1=r),$$
$$(f_1=b_0, g_1=r, h_1=a_0),~~(f_1=a_0, g_1=a_0, h_1=r),~~(f_1=a_0, g_1=r, h_1=a_0),$$
$$(f_1=b_0, g_1=b_0, h_1=r),~~(f_1=b_0, g_1=r, h_1=b_0),~~(f_1=r, g_1=a_0, h_1=b_0),$$
$$(f_1=r, g_1=b_0, h_1=a_0),~~(f_1=a_2, g_1=b_2, h_1=r),~~(f_1=a_2, g_1=r, h_1=b_2),$$
$$(f_1=b_2, g_1=a_2, h_1=r),~~(f_1=b_2, g_1=r, h_1=a_2),~~(f_1=a_2, g_1=a_2, h_1=r),$$
$$(f_1=a_2, g_1=r, h_1=a_2),~~(f_1=b_2, g_1=b_2, h_1=r),~~(f_1=b_2, g_1=r, h_1=b_2),$$
$$(f_1=r, g_1=a_2, h_1=b_2),~~(f_1=r, g_1=b_2, h_1=a_2),~~(f_1=r, g_1=a_0, h_1=a_0),$$
$$(f_1=r, g_1=a_2, h_1=a_2),~~(f_1=r, g_1=b_0, h_1=b_0),~~(f_1=r, g_1=b_2, h_1=b_2),$$
$$(f_1=a_0+a_1+a_2, g_1=r, h_1=b_0+b_1+b_2),~~(f_1=a_0+a_1+a_2, g_1=b_0+b_1+b_2, h_1=r),$$
$$(f_1=r, g_1=a_0+a_1+a_2, h_1=b_0+b_1+b_2),~~(f_1=b_0+b_1+b_2, g_1=r, h_1=a_0+a_1+a_2),$$
$$(f_1=b_0+b_1+b_2, g_1=a_0+a_1+a_2, h_1=r),~~(f_1=r, g_1=b_0+b_1+b_2, h_1=a_0+a_1+a_2),$$
$$(f_1=a_0+a_1+a_2, g_1=a_0+a_1+a_2, h_1=r),~~(f_1=a_0+a_1+a_2, g_1=r, h_1=a_0+a_1+a_2),$$
$$(f_1=r, g_1=a_0+a_1+a_2, h_1=a_0+a_1+a_2),~~(f_1=b_0+b_1+b_2, g_1=b_0+b_1+b_2, h_1=r),$$
$$(f_1=b_0+b_1+b_2, g_1=r, h_1=b_0+b_1+b_2),~~(f_1=r, g_1=b_0+b_1+b_2, h_1=b_0+b_1+b_2)$$
for $r\in H_1$ and these cases are clear by the construction of $I$.\\
\emph{Case 2:}
\begin{description}
  \item[] If ($f_1=a_0, g_1=b_1, h_1=a_0$),~then $f_1g_1h_1\in I$ and $f_1h_1g_1\in I$, since $a_0b_1a_0=a_0b_1a_0+a_1b_0a_0=\\(a_0b_1+a_1b_0)a_0\in I$ and $a_0a_0b_1\in I$.
  \item[] If ($f_1=b_0, g_1=a_1, h_1=b_0$),~then $f_1g_1h_1\in I$ and $f_1h_1g_1\in I$, since $b_0a_1b_0=b_0a_1b_0+b_1a_0b_0=\\(b_0a_1+b_1a_0)b_0\in I$ and $b_0b_0a_1\in I$.
  \item[] If ($f_1=b_2, g_1=a_1, h_1=b_2$),~then $f_1g_1h_1\in I$ and $f_1h_1g_1\in I$, since $b_2a_1b_2=b_2a_1b_2+b_2a_2b_1=\\b_2(a_1b_2+a_2b_1)\in I$ and $b_2b_2a_1\in I$.
  \item[] If ($f_1=a_2, g_1=b_1, h_1=a_2$),~then $f_1g_1h_1\in I$ and $f_1h_1g_1\in I$, since $a_2b_1a_2=a_2b_1a_2+a_2b_2a_1=\\a_2(b_1a_2+b_2a_1)\in I$ and $a_2a_2b_1\in I$.
  \item[] If ($f_1=b_0, g_1=a_1, h_1=a_0$),~then $f_1g_1h_1\in I$ and $f_1h_1g_1\in I$, since $b_0a_1a_0=b_0a_1a_0+b_0a_0a_1=\\b_0(a_0a_1+a_1a_0)\in I$ and $b_0a_0a_1\in I$.
  \item[] If ($f_1=a_0, g_1=a_1, h_1=b_0$),~then $f_1g_1h_1\in I$ and $f_1h_1g_1\in I$, since $a_0a_1b_0=a_0a_1b_0+a_1a_0b_0=\\(a_0a_1+a_1a_0)b_0\in I$ and $a_0b_0a_1\in I$.
  \item[] If ($f_1=a_0, g_1=b_1, h_1=b_0$),~then $f_1g_1h_1\in I$ and $f_1h_1g_1\in I$, since $a_0b_1b_0=a_0b_1b_0+a_0b_0b_1=\\a_0(b_0b_1+b_1b_0)\in I$ and $a_0b_0b_1\in I$.
  \item[] If ($f_1=b_0, g_1=b_1, h_1=a_0$),~then $f_1g_1h_1\in I$ and $f_1h_1g_1\in I$, since $b_0b_1a_0=b_0b_1a_0+b_1b_0a_0=\\(b_0b_1+b_1b_0)a_0\in I$ and $b_0a_0b_1\in I$.
  \item[] If ($f_1=a_2, g_1=a_1, h_1=b_2$),~then $f_1g_1h_1\in I$ and $f_1h_1g_1\in I$, since $a_2a_1b_2=a_2a_1b_2+a_1a_2b_2=\\(a_1a_2+a_2a_1)b_2\in I$ and $a_2b_2a_1\in I$.
  \item[] If ($f_1=b_2, g_1=a_1, h_1=a_2$),~then $f_1g_1h_1\in I$ and $f_1h_1g_1\in I$, since $b_2a_1a_2=b_2a_1a_2+b_2a_2a_1=\\b_2(a_1a_2+a_2a_1)\in I$ and $b_2a_2a_1\in I$.
  \item[] If ($f_1=a_2, g_1=b_1, h_1=b_2$),~then $f_1g_1h_1\in I$ and $f_1h_1g_1\in I$, since $a_2b_1b_2=a_2b_1b_2+a_2b_2b_1=\\a_{2}(b_1b_2+b_2b_1)\in I$ and $a_2b_2b_1\in I$.
  \item[] If ($f_1=b_2, g_1=b_1, h_1=a_2$),~then $f_1g_1h_1\in I$ and $f_1h_1g_1\in I$, since $b_2b_1a_2=b_2b_1a_2+b_1b_2a_2=\\(b_1b_2+b_2b_1)a_2\in I$ and $b_2a_2b_1\in I$.
\end{description}
Thus we obtain that $f_1g_1h_1\in I$ implies $f_1h_1g_1\in I$ for any case, and it proves the claim. Now let $f=f_1+f_2+f_3+f_4,~g=g_1+g_2+g_3+g_4,~h=h_1+h_2+h_3+h_4$ with $f_i, g_i, h_i\in H_i$ for $i=1, 2, 3$ and $f_4, g_4, h_4\in I$. Let $fgh\in I$ with $f, g, h\in A$. We want to see that $R$ is symmetric. Since each monomial of degree $\geq 4$ is contained in $I$, then we have $fgh=f_1g_1h_1+h'\in I$, where $h'\in I$. Hence, $f_1g_1h_1\in I$ and $f_1h_1g_1\in I$ by the claim. Therefore, we have $fhg\in I$ as required.
\end{example}

Recall that for a ring $R$ and an endomorphism $\sigma$ of $R$, an ideal $I$ of $R$ is called a \emph{$\sigma$-ideal} if $\sigma(I)\subseteq I$.

\begin{definition}\cite[Definition 1.1]{KKH}
\rm Let $\sigma$ be an automorphism of a ring $R$. For a $\sigma$-ideal $I$ of $R$, $I$ is called \emph{strongly $\sigma$-semiprime ideal} of $R$ if $aR\sigma(a)\subseteq I$ implies $a\in I$ for any $a\in R$. $R$ is called a \emph{strongly $\sigma$-semiprime} ring if the zero ideal is strongly $\sigma$-semiprime.
\end{definition}

For an automorphism $\sigma$ of $R$, every $\sigma$-rigid ring is strongly $\sigma$-semiprime. Also recall that any $\sigma$-rigid ring is strongly $\sigma$-symmetric. The following proposition shows when strongly $\sigma$-symmetric rings are $\sigma$-rigid.

\begin{proposition}\label{sssp}
Let $R$ be a ring and $\sigma$ be an automorphism of $R$. Then $R$ is $\sigma$-rigid if and only if $R$ is strongly $\sigma$-semiprime and $R$ is strongly $\sigma$-symmetric.
\end{proposition}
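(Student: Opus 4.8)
The plan is to prove the two implications separately. The forward direction will be essentially free: the discussion following Definition~\ref{def} records that every $\sigma$-rigid ring is strongly $\sigma$-symmetric, and the remark immediately preceding the statement records that, for an automorphism $\sigma$, every $\sigma$-rigid ring is strongly $\sigma$-semiprime. Hence if $R$ is $\sigma$-rigid, both conditions hold and there is nothing further to check.

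The content lies in the converse. Assuming $R$ is both strongly $\sigma$-semiprime and strongly $\sigma$-symmetric, I would take an arbitrary $a\in R$ with $a\sigma(a)=0$ and aim to force $a=0$. The idea is to manufacture the hypothesis $aR\sigma(a)=0$ of the strong $\sigma$-semiprime condition and then invoke that condition directly. First, applying the chain of equivalences in Lemma~\ref{Lemma1-23.10.18}(3) with both entries equal to $a$ (and $m=1$) converts $a\sigma(a)=0$ into $a^2=0$. Since a strongly $\sigma$-symmetric ring is symmetric by Lemma~\ref{Lemma1-23.10.18}(1), and symmetric rings have the IFP, the relation $a^2=0$ upgrades to $aRa=0$, that is, $ara=0$ for all $r\in R$.

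The crux is the bridging step that reinserts $\sigma$ on the correct factor: I would apply Lemma~\ref{Lemma1-23.10.18}(3) a second time, now with first entry $ar$ and second entry $a$, to turn each relation $(ar)a=0$ into $(ar)\sigma(a)=0$. Letting $r$ run over $R$, this is precisely $aR\sigma(a)=0$, so strong $\sigma$-semiprimeness forces $a=0$; thus $\sigma$ is rigid and $R$ is $\sigma$-rigid. The only point requiring care is that this second invocation of the lemma is applied to the pair $(ar,a)$ rather than to $a$ alone, but the equivalences there are stated for arbitrary elements of $R$, so the application is legitimate and is exactly what allows $\sigma$ to attach to the right-hand factor while leaving $ar$ untouched.
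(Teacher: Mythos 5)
Your proof is correct, but it reaches the key identity $aR\sigma(a)=0$ by a genuinely different route than the paper. The paper argues directly inside the skew polynomial ring: from $a\sigma(a)=0$ it forms $p(x)=ax$ and $q(x)=a$, notes $p(x)q(x)r(x)=0$ for every $r(x)\in R[x;\sigma]$, applies symmetry of $R[x;\sigma]$ to get $p(x)r(x)q(x)=0$, and reads off $a\sigma(r)\sigma(a)=0$ for all $r\in R$; it then needs the surjectivity of $\sigma$ to convert $a\sigma(R)\sigma(a)=0$ into $aR\sigma(a)=0$ before invoking strong $\sigma$-semiprimeness. You instead stay at the level of ring elements and use already-established consequences of strong $\sigma$-symmetry: Lemma~\ref{Lemma1-23.10.18}(3) turns $a\sigma(a)=0$ into $a^2=0$, the chain symmetric $\Rightarrow$ reversible $\Rightarrow$ IFP upgrades this to $aRa=0$, and a second application of the lemma to each pair $(ar,a)$ inserts $\sigma$ on the right-hand factor without twisting $r$, giving $aR\sigma(a)=0$ directly. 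Both proofs finish identically via strong $\sigma$-semiprimeness, and both treat the forward direction as a citation of the remarks preceding the statement. What the paper's computation buys is self-containedness: it uses only the definition of strong $\sigma$-symmetry, with explicit polynomials. What yours buys is an element-level argument that isolates exactly which consequences (compatibility and IFP) are needed, and in particular it never uses that $\sigma$ is onto --- the automorphism hypothesis enters only because strong $\sigma$-semiprimeness is defined for automorphisms --- so your converse would survive in any setting where that definition is extended to endomorphisms.
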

\begin{proof} Suppose that $R$ is strongly $\sigma$-semiprime and strongly $\sigma$-symmetric. Let $a\sigma(a)=0$ for $a\in R$. Now, consider the polynomials $p(x)=ax$ and $q(x)=a$ in $R[x;\sigma]$. Then we have $p(x)q(x)r(x)=0$ for any $r(x)\in R[x;\sigma]$. Since $R$ is strongly $\sigma$-symmetric, we have $p(x)r(x)q(x)=0$ for any $r(x)\in R[x;\sigma]$. This implies that $aR\sigma(a)=0$ since $\sigma$ is onto. Therefore, $a=0$ since $R$ is strongly $\sigma$-semiprime.
\end{proof}

The concepts of strongly $\sigma$-symmetric rings and strongly $\sigma$-semiprime rings do not imply each other by the following examples.

\begin{example}\rm
(1) Let $F$ be a field with char$(F)\neq 2$. Consider the $2\times 2$ matrix ring over $F$, $R=Mat_{2}(F)$, and let $\sigma:R\rightarrow R$ be an endomorphism defined by $\sigma\left( \left( \begin{array}{cc} a & b \\ c & d \\ \end{array} \right)  \right) = \left( \begin{array}{cc} a & -b \\ -c & d \\ \end{array} \right)$. Then it is proved in \cite[Example 2.1]{KKH} that $R$ is strongly $\sigma$-semiprime. Let $p(x)=\left( \begin{array}{cc} 0 & 1 \\  0 & 1 \\  \end{array}  \right)$, $q(x)=\left( \begin{array}{cc} 1 & 1 \\  1 & 1 \\  \end{array}  \right)x$ and $r(x)=\left( \begin{array}{cc} 1 & 1 \\  1 & 1 \\  \end{array}  \right)\in R[x;\sigma]$. Then we have $p(x)q(x)r(x)=0$, but
$$p(x)r(x)q(x)=\left( \begin{array}{cc} 0 & 1 \\  0 & 1 \\  \end{array}  \right)\left( \begin{array}{cc} 1 & 1 \\  1 & 1 \\  \end{array}  \right)\left( \begin{array}{cc} 1 & 1 \\ 1 & 1 \\  \end{array}  \right)x=\left( \begin{array}{cc} 2 & 2 \\  2 & 2 \\  \end{array}  \right)x\neq O.$$
Thus $R$ is not strongly $\sigma$-symmetric. Moreover, $R$ is not $\sigma$-rigid since $\begin{pmatrix}
1 & 1\\
1 & 1
\end{pmatrix}\sigma\left(\begin{pmatrix}
1 & 1\\
1 & 1
\end{pmatrix}\right)=0$ and $\begin{pmatrix}
1 & 1\\
1 & 1
\end{pmatrix}\neq 0$.
This example also shows that being strongly $\sigma$-symmetric is not superfluous in Theorem \ref{sssp}.\\
(2) Let us consider the ring $R=\left\{ \left( \begin{array}{cc} a & t \\ 0 & a \\ \end{array} \right)\mid a\in \mathbb{Z},\ t\in \mathbb{Q} \right\}$ where $\mathbb{Z}$ and $\mathbb{Q}$ are the set of all integers and rational numbers, respectively. Let $\sigma$ be the identity endomorphism of $R$. $R$ is not strongly $\sigma$-semiprime since
$$\left( \begin{array}{cc} 0 & 1 \\ 0 & 0 \\ \end{array} \right)
 \left( \begin{array}{cc} a & t \\ 0 & a \\ \end{array} \right)
 \sigma\left(\left( \begin{array}{cc} 0 & 1 \\ 0 & 0 \\ \end{array} \right)\right)=0$$
for any $a\in \mathbb{Z}$ and $t\in \mathbb{Q}$. On the other hand, suppose that $p(x)q(x)r(x)=0$ for the polynomials
$$p(x)=\sum^{m}_{i=0}\left( \begin{array}{cc} a_i & t_i \\ 0 & a_i \\ \end{array} \right)x^{i},\
  q(x)=\sum^{n}_{j=0}\left( \begin{array}{cc} b_j & t'_j\\ 0 & b_j \\ \end{array} \right)x^{j} \text{ and }
  r(x)=\sum^{l}_{k=0}\left( \begin{array}{cc}c_k & t''_k\\ 0 & c_k \\ \end{array} \right)x^{k}$$
in $R[x;\sigma]$. We have
\begin{align}
0=&p(x)q(x)r(x)\notag \\
 =&\sum^{m+n+l}_{t=0}\bigg( \sum_{t=s+k} \bigg( \sum_{s=i+j}
 \left( \begin{array}{cc} a_i & t_i \\ 0 & a_i \\ \end{array} \right)
 {\bar\sigma}^{i}\left( \begin{array}{cc} b_j & t'_j\\ 0 & b_j \\ \end{array} \right)
 {\bar\sigma}^{s}\left( \begin{array}{cc} c_k & t''_k\\ 0 & c_k \\ \end{array} \right)
 \bigg) \bigg)x^{t} \notag\\
 =&\sum^{m+n+l}_{t=0} \sum_{t=s+k}  \sum_{s=i+j}
 \left( \begin{array}{cc} a_ib_jc_k & a_ib_jt''_k+a_it'_jc_k+t_ib_jc_k \\ 0 & a_ib_jc_k \\ \end{array} \right)x^{t} \notag\\
 =&p(x)r(x)q(x). \notag
\end{align}
Therefore, $R$ is strongly $\sigma$-symmetric.
\end{example}

\begin{corollary} Let $R$ be a ring and $\sigma$ be an automorphism of $R$. Then the following statements are equivalent:
\begin{itemize}
\item[(1)]$R$ is strongly $\sigma$-semiprime and and strongly $\sigma$-symmetric.
\item[(2)]$R$ is $\sigma$-rigid.
\item[(3)] $R[x, x^{-1}; \sigma]$ is reduced.
\end{itemize}
\end{corollary}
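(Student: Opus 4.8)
The equivalence of (1) and (2) is exactly Proposition~\ref{sssp}, so the plan is to establish (2)$\Leftrightarrow$(3); this is where all the work lies, and I would treat the two directions separately.

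For (3)$\Rightarrow$(2) I would argue directly from reducedness of the skew Laurent ring. Given $a\in R$ with $a\sigma(a)=0$, consider the element $ax\in R[x,x^{-1};\sigma]$; using $xa=\sigma(a)x$ one computes $(ax)^2=a\sigma(a)x^2=0$, so $ax=0$ because $R[x,x^{-1};\sigma]$ is reduced, and multiplying by the unit $x^{-1}$ gives $a=0$. Hence $\sigma$ is rigid, i.e. $R$ is $\sigma$-rigid. (Equivalently, one may note that $R[x;\sigma]$ is a subring of $R[x,x^{-1};\sigma]$ and is therefore reduced, and $R$ is $\sigma$-rigid precisely when $R[x;\sigma]$ is reduced.)

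For (2)$\Rightarrow$(3) the first step is to record the standard consequences of $\sigma$-rigidity: such a ring is reduced by \cite[Proposition 5]{HKKrigid} and is $\sigma$-compatible, so that $ab=0\Leftrightarrow a\sigma(b)=0$; since $\sigma$ is an automorphism this upgrades to $ab=0\Leftrightarrow a\sigma^{n}(b)=0$ for every integer $n$. With these in hand I would take a putative nonzero $f=\sum_{i=s}^{t}a_ix^{i}\in R[x,x^{-1};\sigma]$ with $f^2=0$ and examine the coefficients of $f^2$ in descending degree. The coefficient of $x^{2t}$ is $a_t\sigma^{t}(a_t)$, so $a_t\sigma^{t}(a_t)=0$; applying $\sigma$-compatibility repeatedly untwists this to $a_t\sigma(a_t)=0$, whence $a_t=0$ by rigidity. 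Replacing $f$ by $f-a_tx^{t}$ and iterating forces every $a_i=0$, so $f=0$ and the ring is reduced. The presence of negative exponents causes no difficulty: the top-degree analysis is identical to the ordinary polynomial case, and the automorphism hypothesis is exactly what allows the compatibility step to run for negative powers of $\sigma$ as well.

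The main obstacle is precisely the twisting by $\sigma$ in the leading-coefficient computation of (2)$\Rightarrow$(3): a priori the relation $a_t\sigma^{t}(a_t)=0$ is weaker than the rigidity hypothesis $a\sigma(a)=0$, and the crux is that $\sigma$-compatibility (itself a consequence of rigidity) collapses these conditions to one another. Once this is observed, the remainder is a routine descending induction on the degree, and the passage to (1) is already supplied by Proposition~\ref{sssp}.
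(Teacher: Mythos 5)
Your proposal is correct, but it handles the main equivalence by a genuinely different route than the paper. For $(1)\Leftrightarrow(2)$ you and the paper do the same thing: both invoke Proposition~\ref{sssp} verbatim. For $(2)\Leftrightarrow(3)$, however, the paper's entire proof is a citation of \cite[Theorem 3]{NM} (Nasr-Isfahani--Moussavi), whereas you prove the equivalence from scratch: the direction $(3)\Rightarrow(2)$ via the square-zero element $ax$ (or, equivalently, by restricting to the subring $R[x;\sigma]$ and using that $\sigma$-rigidity of $R$ means exactly that $R[x;\sigma]$ is reduced, a fact the paper itself records in Section 2), and the direction $(2)\Rightarrow(3)$ via the $\sigma$-compatibility of $\sigma$-rigid rings together with a descending induction on the leading coefficient, where $a_t\sigma^{t}(a_t)=0$ is untwisted to $a_t\sigma(a_t)=0$ --- with the automorphism hypothesis correctly identified as what makes this work for negative $t$. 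What the paper's approach buys is brevity and reliance on published literature; what yours buys is a self-contained, elementary argument that in effect re-proves the special case of \cite[Theorem 3]{NM} actually needed, and makes visible where each hypothesis enters. Two small points you should make explicit to be airtight: first, ruling out square-zero elements suffices for reducedness (if $f^{n}=0$ with $n\geq 2$ minimal, then $(f^{n-1})^{2}=0$ since $2n-2\geq n$, contradicting minimality); second, the claim that rigidity implies $\sigma$-compatibility deserves either a citation to the Hong--Kim--Kwak lemma in \cite{HKKrigid} or the short reduced-ring computation (from $a\sigma(b)=0$ one gets $(ba)\sigma(ba)=b\left[a\sigma(b)\right]\sigma(a)=0$, hence $ba=0$, hence $ab=0$, and conversely). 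Both are routine, so there is no gap of substance.
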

\begin{proof}
$(1)\Leftrightarrow(2)$ is obtained by Theorem \ref{sssp} and $(2)\Leftrightarrow(3)$ follows from \cite[Theorem 3]{NM}.
\end{proof}

By Proposition~\ref{sssp}, we obtain a generalization of the following result.

\begin{corollary}\cite[Proposition 2.7(1)]{HKKL}
$R$ is reduced if and only if $R$ is semiprime and $R$ is symmetric.
\end{corollary}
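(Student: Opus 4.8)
The plan is to deduce the corollary from Proposition~\ref{sssp} by specialising to the identity automorphism $\sigma = 1_R$, and to close the one gap this leaves by a short direct computation. First I would record the dictionary between the $\sigma$-notions and their classical shadows at $\sigma = 1_R$. Since $a\,1_R(a) = a^2$, the ring $R$ is $1_R$-rigid exactly when $a^2 = 0$ forces $a = 0$, and this is equivalent to $R$ being reduced: given $a^n = 0$, repeatedly applying the implication to $a^{2^{k}}$ brings the nilpotency index down to $1$. Since $aR\,1_R(a) = aRa$, the zero ideal is strongly $1_R$-semiprime precisely when $aRa = 0 \Rightarrow a = 0$, which is the usual semiprimeness. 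Finally, since $R[x;1_R] = R[x]$, being strongly $1_R$-symmetric means that $R[x]$ is symmetric. With these identifications Proposition~\ref{sssp} reads: $R$ is reduced if and only if $R$ is semiprime and $R[x]$ is symmetric.

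For the forward implication of the corollary I would then argue that if $R$ is reduced, Proposition~\ref{sssp} at $\sigma = 1_R$ gives that $R$ is semiprime and that $R[x]$ is symmetric; since symmetry is inherited by subrings and $R$ embeds in $R[x]$, the ring $R$ itself is symmetric, as required.

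The reverse implication is where the subtlety lies, and it is the step I expect to be the main obstacle. Proposition~\ref{sssp} only produces reducedness from semiprimeness together with the a priori \emph{stronger} hypothesis that $R[x]$ is symmetric, whereas the corollary assumes merely that $R$ is symmetric; and I see no way to upgrade ``symmetric'' to ``$R[x]$ symmetric'' before knowing $R$ is reduced. So here I would bypass Proposition~\ref{sssp} and argue directly. A symmetric ring has the IFP: if $ab = 0$ then $abr = 0$ for every $r \in R$, and applying symmetry to the triple $(a,b,r)$ yields $arb = 0$, i.e.\ $aRb = 0$. Now assume $R$ is semiprime and symmetric and take $a$ with $a^2 = 0$; then $a\cdot a = 0$ gives $aRa = 0$ by the IFP, and semiprimeness forces $a = 0$. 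Hence $R$ has no nonzero square-zero elements, and the squaring reduction recorded in the first paragraph shows $R$ is reduced. This completes the equivalence and simultaneously confirms the stated point that Proposition~\ref{sssp} is a genuine generalisation: the classical equivalence is exactly its $\sigma = 1_R$ shadow, with the short IFP computation serving only to bridge the distinction between ``symmetric'' and ``strongly symmetric'' in the one direction where the $\sigma$-statement is formally stronger.
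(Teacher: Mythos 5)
Your proof is correct, and it is in fact more complete than what the paper itself provides: the paper gives no proof of this corollary at all, merely prefacing it with the remark that Proposition~\ref{sssp} generalizes it and citing the original source \cite{HKKL}. The implied derivation is exactly your $\sigma=1_R$ specialization, under which $\sigma$-rigid becomes reduced, strongly $\sigma$-semiprime becomes semiprime, and strongly $\sigma$-symmetric becomes ``$R[x]$ is symmetric'' (note that this specialization quietly steps outside the paper's blanket convention that $\sigma$ is non-identity, though nothing in the proof of Proposition~\ref{sssp} uses that convention). You are right to flag that this dictionary only delivers the forward implication of the corollary --- reduced $\Rightarrow$ semiprime and $R[x]$ symmetric $\Rightarrow$ semiprime and $R$ symmetric, via the subring $R\subseteq R[x]$ --- whereas the reverse implication of the corollary is formally \emph{stronger} than the reverse implication of the proposition, since it assumes only that $R$, not $R[x]$, is symmetric. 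Your bridge for that direction (symmetric $\Rightarrow$ IFP, so $a^2=0$ gives $aRa=0$, semiprimeness then forces $a=0$, and the squaring trick eliminates all nilpotents) is exactly the missing ingredient, and it is the standard elementary argument one would expect in the cited reference. So your route differs from the paper's in that you actually prove the statement rather than assert it as a specialization; what your approach buys is logical completeness, making explicit that Proposition~\ref{sssp} ``generalizes'' the corollary only modulo this short direct computation, while what the paper's approach buys is brevity at the cost of leaving that gap unacknowledged.
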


\begin{proposition}\label{subdirectsum}
Let $R$ be a ring, $\sigma$ be an endomorphism of $R$ and $I_{\lambda}$ be an ideal of $R$ with $\sigma(I_{\lambda})\subseteq I_{\lambda}$ for all $\lambda\in \Lambda$. Let $\sigma_{\lambda}:R/I_{\lambda}\rightarrow R/I_{\lambda}$ be the induced endomorphism of $R/I_{\lambda}$. If $R$ is a subdirect sum of strongly $\sigma_{\lambda}$-symmetric rings for all $\lambda\in \Lambda$, then $R$ is a strongly $\sigma$-symmetric ring.
\end{proposition}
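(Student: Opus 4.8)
The plan is to transfer the symmetry condition from each quotient $R/I_{\lambda}$ up to $R$ by using the natural coefficient-wise reduction maps on skew polynomial rings. The crucial structural fact I will invoke is that, since $R$ is a subdirect sum of the $R/I_{\lambda}$, the canonical embedding $R \hookrightarrow \prod_{\lambda} R/I_{\lambda}$ is injective, which is equivalent to $\bigcap_{\lambda \in \Lambda} I_{\lambda} = 0$. This is what will let me conclude that an element whose image vanishes in every quotient is itself zero.

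First I would set up, for each $\lambda$, the ring homomorphism $\pi_{\lambda} \colon R[x;\sigma] \to (R/I_{\lambda})[x;\sigma_{\lambda}]$ obtained by reducing each coefficient modulo $I_{\lambda}$ and fixing $x$. This map is well defined precisely because $\sigma(I_{\lambda}) \subseteq I_{\lambda}$: writing $\rho_{\lambda} \colon R \to R/I_{\lambda}$ for the projection, the hypothesis gives $\rho_{\lambda}\circ\sigma = \sigma_{\lambda}\circ\rho_{\lambda}$, so the skew relation $xa = \sigma(a)x$ is respected, namely $x\,\rho_{\lambda}(a) = \sigma_{\lambda}(\rho_{\lambda}(a))\,x = \rho_{\lambda}(\sigma(a))\,x$, and hence $\pi_{\lambda}$ is a ring homomorphism. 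Its kernel is exactly the set of skew polynomials all of whose coefficients lie in $I_{\lambda}$.

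Now suppose $p(x)q(x)r(x) = 0$ in $R[x;\sigma]$ for $p(x), q(x), r(x) \in R[x;\sigma]$. Applying the homomorphism $\pi_{\lambda}$ gives $\pi_{\lambda}(p)\pi_{\lambda}(q)\pi_{\lambda}(r) = 0$ in $(R/I_{\lambda})[x;\sigma_{\lambda}]$. Since $R/I_{\lambda}$ is strongly $\sigma_{\lambda}$-symmetric, the ring $(R/I_{\lambda})[x;\sigma_{\lambda}]$ is symmetric by Definition~\ref{def}, so we may permute the factors to obtain $\pi_{\lambda}(p)\pi_{\lambda}(r)\pi_{\lambda}(q) = 0$, that is, $\pi_{\lambda}\big(p(x)r(x)q(x)\big) = 0$. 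As this holds for every $\lambda$, each coefficient of $p(x)r(x)q(x)$ lies in $\bigcap_{\lambda \in \Lambda} I_{\lambda} = 0$, whence $p(x)r(x)q(x) = 0$. This shows $R[x;\sigma]$ is symmetric, i.e. $R$ is strongly $\sigma$-symmetric. The argument is essentially routine once the reduction maps are in place; the only point requiring care is verifying that $\pi_{\lambda}$ is genuinely a homomorphism of skew polynomial rings, which is where the invariance $\sigma(I_{\lambda})\subseteq I_{\lambda}$ (and the resulting compatibility $\rho_{\lambda}\circ\sigma = \sigma_{\lambda}\circ\rho_{\lambda}$) is indispensable.
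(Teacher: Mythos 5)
Your proposal is correct and follows essentially the same route as the paper: reduce the relation $p(x)q(x)r(x)=0$ modulo each $I_{\lambda}$, apply symmetry of $(R/I_{\lambda})[x;\sigma_{\lambda}]$, and use $\bigcap_{\lambda\in\Lambda} I_{\lambda}=0$ to lift $p(x)r(x)q(x)=0$ back to $R[x;\sigma]$. The only cosmetic difference is that the paper cites McCoy's theorem on subdirect sums to extract the facts that each $R/I_{\lambda}$ is strongly $\sigma_{\lambda}$-symmetric and $\bigcap_{\lambda\in\Lambda} I_{\lambda}=0$, whereas you obtain them directly from the definition of a subdirect representation, and you spell out the well-definedness of the coefficient-wise reduction map, which the paper leaves implicit.
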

\begin{proof} Since $R$ is a subdirect sum of strongly $\sigma_{\lambda}$-symmetric rings, by \cite[Theorem 3]{MC2}, we have $R/I_{\lambda}$ is a strongly $\sigma_{\lambda}$-symmetric ring for all $\lambda\in \Lambda$ and $\cap_{\lambda\in \Lambda}I_{\lambda}=0$. Suppose that $p(x)q(x)r(x)=0$, where $p(x)=\sum^{m}_{i=0}a_{i}x^{i}$, $q(x)=\sum^{n}_{j=0}b_{j}x^{j}$ and $r(x)=\sum^{l}_{k=0}c_{k}x^{k}$ in $R[x;\sigma]$. Then $\bar{p}(x)\bar{q}(x)\bar{r}(x)=\bar{0}$ in $\big(R/I_{\lambda}\big)[x;\sigma_{\lambda}]$ for all $\lambda\in \Lambda$. Since $R/I_{\lambda}$ is  strongly $\sigma_{\lambda}$-symmetric for all $\lambda\in \Lambda$, we can deduce that $\bar{p}(x)\bar{r}(x)\bar{q}(x)=\bar{0}$. Then
$\sum_{t=s+j}\sum_{s=i+k} a_{i}\sigma^{i}(c_{k})\sigma^{s}(b_{j}) \in I_{\lambda}$ for all $\lambda\in \Lambda$. Hence,
$\sum_{t=s+j}\sum_{s=i+k} a_{i}\sigma^{i}(c_{k})\sigma^{s}(b_{j})=0$ since $\cap_{\lambda\in \Lambda}I_{\lambda}=0$. Therefore, $p(x)r(x)q(x)=0$ as required.
\end{proof}

Let $\sigma_{\gamma}$ be an endomorphism of a ring $R_{\gamma}$ for each $\gamma\in \Gamma$. Then the map $\sigma:\prod_{\gamma\in \Gamma}R_{\gamma}\rightarrow \prod_{\gamma\in \Gamma}R_{\gamma}$ defined by $\sigma((a_{\gamma}))=(\sigma_{\gamma}(a_{\gamma}))$ for $(a_{\gamma})\in \prod_{\gamma\in \Gamma}R_{\gamma}$ is endomorphism of $\prod_{\gamma\in \Gamma}R_{\gamma}$.
The proof of the following lemma is obtained by routine computations.

\begin{lemma}\label{product}
Let $R_{\gamma}$ be a ring with an endomorphism $\sigma_{\gamma}$ for each $\gamma\in \Gamma$. Then the followings are equivalent:
\begin{itemize}
  \item [(i)] $R_{\gamma}$ is strongly $\sigma_{\gamma}$-symmetric for each $\gamma\in \Gamma$.
  \item [(ii)] The direct product $\prod_{\gamma\in \Gamma}R_{\gamma}$ is strongly $\sigma$-symmetric.
  \item [(iii)] The direct sum $\bigoplus_{\gamma\in \Gamma}R_{\gamma}$ is strongly $\sigma$-symmetric.
\end{itemize}
\end{lemma}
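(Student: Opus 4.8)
The plan is to reduce everything to a componentwise analysis of products in the skew polynomial ring. Write an element of $\left(\prod_{\gamma\in\Gamma}R_{\gamma}\right)[x;\sigma]$ as $p=\sum_{i}a_{i}x^{i}$ with coefficients $a_{i}=(a_{i,\gamma})_{\gamma}\in\prod_{\gamma}R_{\gamma}$, and associate to it the family of component polynomials $p_{\gamma}=\sum_{i}a_{i,\gamma}x^{i}\in R_{\gamma}[x;\sigma_{\gamma}]$. The key observation is that addition of coefficients, the twisted rule $xa=\sigma(a)x=(\sigma_{\gamma}(a_{\gamma}))_{\gamma}x$, and hence the whole multiplication of skew polynomials are all carried out componentwise; consequently, for $p,q,r$ as above the $\gamma$-component of $pqr$ is exactly $p_{\gamma}q_{\gamma}r_{\gamma}$. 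Therefore $pqr=0$ holds in $\left(\prod_{\gamma}R_{\gamma}\right)[x;\sigma]$ if and only if $p_{\gamma}q_{\gamma}r_{\gamma}=0$ in $R_{\gamma}[x;\sigma_{\gamma}]$ for every $\gamma\in\Gamma$, and similarly with $prq$ in place of $pqr$.

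Granting this, $(i)\Rightarrow(ii)$ is immediate: if $pqr=0$ then $p_{\gamma}q_{\gamma}r_{\gamma}=0$ for all $\gamma$, whence $p_{\gamma}r_{\gamma}q_{\gamma}=0$ because each $R_{\gamma}[x;\sigma_{\gamma}]$ is symmetric, and reassembling gives $prq=0$. For $(ii)\Rightarrow(iii)$ I would note that $\bigoplus_{\gamma}R_{\gamma}$ is a subring of $\prod_{\gamma}R_{\gamma}$ with $\sigma\big(\bigoplus_{\gamma}R_{\gamma}\big)\subseteq\bigoplus_{\gamma}R_{\gamma}$, since $\sigma$ acts componentwise and each $\sigma_{\gamma}$ fixes $0$; hence the conclusion follows from the earlier remark that a $\sigma$-invariant subring of a strongly $\sigma$-symmetric ring is again strongly $\sigma$-symmetric. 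Finally, for $(iii)\Rightarrow(i)$ I would fix $\gamma_{0}$ and use the canonical embedding of $R_{\gamma_{0}}[x;\sigma_{\gamma_{0}}]$ into $\big(\bigoplus_{\gamma}R_{\gamma}\big)[x;\sigma]$ placing a polynomial in the $\gamma_{0}$-slot and $0$ elsewhere: a relation $p_{\gamma_{0}}q_{\gamma_{0}}r_{\gamma_{0}}=0$ lifts to a zero product of the embedded polynomials, and strong $\sigma$-symmetry of the direct sum pushes it back down to $p_{\gamma_{0}}r_{\gamma_{0}}q_{\gamma_{0}}=0$.

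The step requiring the most care---and the reason I would resist a slicker one-line argument---is the componentwise description of the product. It is tempting to assert a ring isomorphism $\left(\prod_{\gamma}R_{\gamma}\right)[x;\sigma]\cong\prod_{\gamma}\big(R_{\gamma}[x;\sigma_{\gamma}]\big)$, but this fails when $\Gamma$ is infinite, since a single skew polynomial has one finite degree bounding all of its components, whereas the right-hand side permits the degrees to grow without bound across $\gamma$. The componentwise formula for $pqr$ avoids this pitfall entirely, as it only ever inspects the finitely many coefficients actually occurring in $p$, $q$, and $r$. A minor related point is that $\bigoplus_{\gamma}R_{\gamma}$ lacks an identity when $\Gamma$ is infinite, but the symmetry condition $pqr=0\Rightarrow prq=0$ makes sense and transfers regardless, so this causes no difficulty.
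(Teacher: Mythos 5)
Your proof is correct, and it is precisely the ``routine computation'' that the paper invokes without writing out: Lemma~\ref{product} is stated in the paper with no explicit proof, and the intended argument is exactly your componentwise description of multiplication in $\bigl(\prod_{\gamma}R_{\gamma}\bigr)[x;\sigma]$ together with the $\sigma$-invariant-subring remark for the direct sum and the slot embedding for the converse. Your added caution that $\bigl(\prod_{\gamma}R_{\gamma}\bigr)[x;\sigma]\not\cong\prod_{\gamma}\bigl(R_{\gamma}[x;\sigma_{\gamma}]\bigr)$ for infinite $\Gamma$ is a genuine pitfall worth recording, and your argument correctly sidesteps it.
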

%\begin{proof}
%The proof is trivial, noting that the class of strongly $\sigma$-symmetric is closed under subrings.
%\end{proof}

A ring $R$ is called \emph{local} if $R/J(R)$ is a division ring, where $J(R)$ denotes the Jacobson radical of $R$. $R$ is called \emph{semilocal} if $R/J(R)$ is semisimple Artinian and $R$ is called \emph{semiperfect} if $R$ is semilocal and idempotents can be lifted modulo $J(R)$. Note that local rings are Abelian and semilocal (see \cite{L} for details).

\begin{proposition}
Let $R$ be a ring and $\sigma$ be an endomorphism of $R$. Then we have the followings.
\begin{itemize}
  \item [(i)]$R$ is a strongly $\sigma$-symmetric and semiperfect ring if and only if $R=\bigoplus^{n}_{i=1}R_{i}$ such that $R_{i}$ is local and strongly $\sigma_{i}$-symmetric ring, where $\sigma_{i}$ is an endomorphism of $R_{i}$ for all $i=0,1,\ldots n$.
  \item [(ii)] Let $R$ be a ring and $e$ be a central idempotent of $R$. Then $eR$ and $(1-e)R$ are strongly $\sigma$-symmetric if and only if $R$ is strongly $\sigma$-symmetric.
\end{itemize}
\end{proposition}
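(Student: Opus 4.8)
The plan is to reduce both parts to Lemma~\ref{product} once the appropriate ring direct-sum decompositions are in place and $\sigma$ is shown to respect them; the only real work is producing those decompositions. I would therefore prove (ii) first, then feed it, together with a structure result for Abelian semiperfect rings, into (i).

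For (ii), I would start from the observation that since $e$ is a central idempotent, $R=eR\oplus(1-e)R$ is a decomposition of $R$ into a direct sum of two-sided ideals which are themselves rings with identities $e$ and $1-e$, and this identifies $R$ with the ring direct product $eR\times(1-e)R$. The crux is that $\sigma$ is the product of its restrictions. In the direction ``$R$ strongly $\sigma$-symmetric $\Rightarrow$ summands strongly $\sigma$-symmetric'', Lemma~\ref{Lemma1-23.10.18}(4) gives $\sigma(e)=e$ (hence $\sigma(1-e)=1-e$), so $\sigma(eR)\subseteq eR$ and $\sigma((1-e)R)\subseteq(1-e)R$, and we may set $\sigma_1=\sigma|_{eR}$, $\sigma_2=\sigma|_{(1-e)R}$. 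In the converse direction, the hypothesis that $eR$ and $(1-e)R$ are strongly $\sigma$-symmetric already presupposes that $\sigma$ restricts to each summand, and unitality of the restricted endomorphism forces it to fix the corresponding identity $e$ or $1-e$. In either case $R\cong eR\times(1-e)R$ with $\sigma$ corresponding to the product endomorphism, and the asserted equivalence is precisely Lemma~\ref{product} with $\Gamma=\{1,2\}$.

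For (i), the converse (``if'') direction is immediate: a finite direct sum of local rings is semiperfect (local rings are semiperfect and semiperfectness is preserved by finite direct sums), and by Lemma~\ref{product} a finite direct sum of strongly $\sigma_i$-symmetric rings is strongly $\sigma$-symmetric, where $\sigma$ is the product of the $\sigma_i$. For the forward direction I would argue as follows. By Proposition~\ref{sym} together with Lemma~\ref{Lemma1-23.10.18}(4), a strongly $\sigma$-symmetric ring is Abelian, so every idempotent of $R$ is central and is fixed by $\sigma$. Since $R$ is semiperfect, there is a complete set of orthogonal local idempotents $e_1,\dots,e_n$ with $1=e_1+\cdots+e_n$ and each Peirce corner $e_iRe_i$ local. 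Centrality of the $e_i$ collapses $e_iRe_i=e_iR$, so $R=\bigoplus_{i=1}^n e_iR$ is a direct sum of local rings $R_i:=e_iR$. Because $\sigma(e_i)=e_i$, $\sigma$ restricts to an endomorphism $\sigma_i$ of each $R_i$ and $\sigma$ is the product of the $\sigma_i$; Lemma~\ref{product} then yields that each $R_i$ is strongly $\sigma_i$-symmetric, completing the decomposition.

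I expect the main obstacle to be the structure-theoretic step in the forward direction of (i): passing from ``Abelian and semiperfect'' to an honest ring direct sum $R=\bigoplus_{i=1}^n e_iR$ of local summands, and confirming that this decomposition is compatible with $\sigma$. The ring-theoretic half is standard once centrality of the primitive idempotents is invoked (it is exactly what turns each corner $e_iRe_i$ into the ideal $e_iR$), and the compatibility with $\sigma$ is supplied for free by Lemma~\ref{Lemma1-23.10.18}(4). After the decomposition is secured, both parts funnel entirely through Lemma~\ref{product}.
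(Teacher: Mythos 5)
Your proposal is correct and follows essentially the same route as the paper: decompose $R$ via the (central, $\sigma$-fixed) idempotents supplied by semiperfectness and Lemma~\ref{Lemma1-23.10.18}(4), collapse $e_iRe_i$ to $e_iR$ using Abelian-ness, and funnel everything through Lemma~\ref{product}. The only cosmetic difference is that for the forward direction of (i) you invoke the reverse implication of Lemma~\ref{product} where the paper appeals to the fact that a $\sigma$-stable subring of a strongly $\sigma$-symmetric ring is again strongly $\sigma$-symmetric; both justifications are valid.
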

\begin{proof} (i) Suppose that $R$ is  strongly $\sigma$-symmetric  and semiperfect. Since $R$ is semiperfect, $R$ has a finite orthogonal set $\{e_1, e_2, \ldots, e_n \}$ of local idempotents whose sum is $1$ by \cite[Corollary 3.7.2]{JL}. Then $R=\sum\limits_{i=1}^ne_iR$ such that $e_{i}Re_{i}$ is a local ring for all $i=1,\ldots, n$. Since $R$ is strongly $\sigma$-symmetric, then $R$ is abelian and $e_iRe_i=e_iR$. Also, by Lemma~\ref{Lemma1-23.10.18}(iv), $\sigma(e_iR)\subseteq e_iR$ for all $i=1,\ldots, n$. Then $e_iR$ is strongly $\sigma_{i}$-symmetric and local subring of $R$, where $\sigma_{i}$ is an endomorphism of $e_iR$ induced by $\sigma$.
Conversely, let $R$ be a finite direct sum of strongly $\sigma_i$-symmetric local rings $R_{i}$ for all $i=0,1,\ldots n$. Then $R$ is semiperfect
since local rings are semiperfect and $R$ is  strongly $\sigma$-symmetric by Lemma~\ref{product}.\\
(ii) The proof is clear by Lemma~\ref{product} since $R \cong eR \oplus (1-e)R$.
\end{proof}

%McCoy \cite{MC} showed that if two polynomials annihilate each other over a commutative ring then each polynomial has a nonzero annihilator in the base
%ring.  Nielsen \cite{NI} and Rege-Chhawchharia \cite{RC} called a noncommutative ring $R$  {\it right McCoy} (resp.,  left McCoy) if
%whenever any nonzero polynomials $f(x), g(x)\in R[x]$ satisfy $f(x)g(x)=0$, then  $f(x)c=0$ (resp., $cg(x)=0$) for some nonzero
%$c\in R$, and a ring $R$ is called McCoy if it is both left and right McCoy. Reversible rings are  McCoy by
%\cite[Theorem 2]{NI}, but there exists an IFP ring which is not McCoy \cite[Section 3]{NI}.
%The McCoy condition on a ring  is also extended   on the skew polynomial ring. A ring $R$ is called
%{\it $\sigma$-skew McCoy}  \cite[Defiition 1]{BKL}  if for any nonzero polynomials $p(x)$ and $q(x) \in R[x;\sigma ]$, $p(x)q(x)=0$
%implies $p(x)c=0$  for some nonzero $c\in R$.
%\begin{proposition}\label{mc}
%If $R$ is a strongly $\sigma$-symmetric ring, then $R$ is a $\sigma$-skew McCoy ring.
%\end{proposition}
%\begin{proof}Suppose that $R$ is a strongly $\sigma$-symmetric ring. Then $R[x;\sigma]$ is symmetric and so $R[x;\sigma]$ reversible. Therefore $R$ is %$\sigma$-skew McCoy by \cite[Corollary 6]{BKL}. \hfill $\square$
%\end{proof}

\section{Extensions of strongly $\sigma$-symmetric rings}

In this section, we investigate properties of strongly $\sigma$-symmetric rings and their extensions. First, we deal with the polynomial extensions of strongly $\sigma$-symmetric rings. Recall that if $\sigma$ is an endomorphism of a ring $R$, then the map $\sigma$ can be extended to an endomorphism of the polynomial ring $R[x]$ and the map $\bar{\sigma}:R[x]\rightarrow R[x]$ is defined by
$$\bar{\sigma}\left( \sum^{m}_{i=0}a_{i}x^{i}\right)=\sum^{m}_{i=0}\sigma (a_{i})x^{i}.$$
Polynomial rings over commutative (resp., reduced) rings are commutative (resp., reduced) obviously. On the other hand, Huh et. al. showed that polynomial rings over IFP rings need not be IFP \cite[Example 2]{HLS} and Kim and Lee proved that polynomial rings over reversible rings need not be reversible \cite[Example 2.1]{KL}. In \cite[Example 3.1]{HKKL}, it is also proved that polynomial rings over symmetric rings need not be symmetric. Based
on these results, one may ask whether polynomial rings over strongly $\sigma$-symmetric rings are also strongly $\bar{\sigma}$-symmetric. We remark that the idea of the following proof is similar to \cite[Theorem 6]{HKK}.

\begin{theorem} \label{rx}
Let $R$ be a ring with an endomorphism $\sigma$ such that $\sigma^{t}=1_{R}$ for some positive integer $t$, where $1_{R}$ denotes the identity endomorphism of $R$. Then $R$ is strongly $\sigma$-symmetric if and only if $R[x]$ is strongly $\bar{\sigma}$-symmetric.
\end{theorem}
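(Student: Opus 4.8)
The plan is to analyze the ring $T := (R[x])[y;\bar{\sigma}]$, whose symmetry is precisely the assertion ``$R[x]$ is strongly $\bar{\sigma}$-symmetric,'' and to connect it to $R[x;\sigma]$ through the single identity $\bar{\sigma}(x)=x$. Since $\bar{\sigma}$ acts as $\sigma$ on the coefficients in $R$ and fixes the commuting indeterminate $x$, the defining relations of $T$ are $xr=rx$ and $yr=\sigma(r)y$ for $r\in R$, together with $yx=\bar{\sigma}(x)y=xy$; thus $x$ is central in $T$ while $y$ is a skew variable over $R$. In particular the subring of $T$ generated by $R$ and $y$ is exactly $R[y;\sigma]\cong R[x;\sigma]$, which is the object governing ``$R$ is strongly $\sigma$-symmetric.''

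For the easy implication I would note that $R$ is a subring of $R[x]$ with $\bar{\sigma}(R)=\sigma(R)\subseteq R$, so the subring remark following Definition~\ref{def} immediately yields that $R[x]$ strongly $\bar{\sigma}$-symmetric forces $R$ strongly $\sigma$-symmetric. (Equivalently, $R[x;\sigma]\cong R[y;\sigma]$ is a subring of the symmetric ring $T$, and subrings of symmetric rings are symmetric.)

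The content is the converse, and here I would exploit $\sigma^{t}=1_{R}$ by a substitution trick in the spirit of \cite[Theorem 6]{HKK}. For every positive integer $k$ divisible by $t$ we have $\sigma^{k}=1_{R}$, so $y^{k}$ is central in $R[y;\sigma]$; hence the assignment $x\mapsto y^{k}$, together with the identity on $R$ and on $y$, respects all the defining relations of $T$ and defines a ring homomorphism $\Phi_{k}\colon T\to R[y;\sigma]$ sending $\sum_{u,i}c_{ui}x^{u}y^{i}$ to $\sum_{u,i}c_{ui}y^{ku+i}$. The multiplicativity check is where $t\mid k$ is genuinely used: the twist $\sigma^{ku+i}$ produced on the right collapses to $\sigma^{i}$ exactly because $\sigma^{ku}=1_{R}$, matching the twist in $T$. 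Now given $pqr=0$ in $T$, applying $\Phi_{k}$ gives $\Phi_{k}(p)\Phi_{k}(q)\Phi_{k}(r)=0$ in $R[y;\sigma]$; since $R$ is strongly $\sigma$-symmetric, $R[y;\sigma]$ is symmetric, so $\Phi_{k}(p)\Phi_{k}(r)\Phi_{k}(q)=0$, that is $\Phi_{k}(prq)=0$.

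The remaining step, lifting $\Phi_{k}(prq)=0$ back to $prq=0$, is the one I expect to be the main obstacle: it is pure degree bookkeeping but must be arranged carefully. Writing $w=prq=\sum_{u,i}c_{ui}x^{u}y^{i}$ and letting $D$ be its $y$-degree, I would choose a single $k$ that is simultaneously a multiple of $t$ (so $\Phi_{k}$ is well defined) and satisfies $k>D$ (so $\Phi_{k}$ separates the monomials of $w$); both conditions can be met at once. Then for all occurring indices $i,i'\le D<k$ the exponents $ku+i$ are pairwise distinct, since the residue modulo $k$ recovers $i$ and hence $u$; no two monomials of $w$ collide under $\Phi_{k}$, so the coefficient of $y^{ku+i}$ in $\Phi_{k}(w)$ is exactly $c_{ui}$. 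Therefore $\Phi_{k}(w)=0$ forces every $c_{ui}=0$, i.e. $prq=0$, completing the converse and the proof.
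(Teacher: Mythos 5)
Your proof is correct, but it is not the paper's proof: the two arguments share the same skeleton (easy direction by the subring remark; hard direction by a substitution trick in the spirit of \cite[Theorem 6]{HKK} that exploits $\sigma^{t}=1_{R}$ and finishes with a collision-free degree count), yet the substitutions go in opposite directions, and the difference is substantive. The paper keeps $x$ and replaces the skew variable, forming $p(x^{ts+1}),q(x^{ts+1}),r(x^{ts+1})$ inside $R[x;\sigma]$: under that reading the variable $x$, which is central in $R[x]$, is re-interpreted as the skew variable of $R[x;\sigma]$, so the assignment is \emph{not} a ring homomorphism (multiplicativity already fails on $x\cdot a=a\cdot x$ for $a\in R$ with $\sigma(a)\neq a$, since in $R[x;\sigma]$ one has $xa=\sigma(a)x\neq ax$). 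Consequently the paper's pivotal one-line claim, that $p(y)q(y)r(y)=0$ forces $p(x^{ts+1})q(x^{ts+1})r(x^{ts+1})=0$, is not automatic: written out on coefficients, the substituted product acquires extra twists $\sigma^{\alpha}$ from moving the powers of $x$ hidden in the coefficients $p_{i}\in R[x]$ past elements of $R$, and reconciling these with the untwisted relations coming from $p(y)q(y)r(y)=0$ needs a further argument ($\sigma$-compatibility, Lemma \ref{Lemma2-23.10.18}, handles single monomial products but not cross-term cancellations) which the paper does not supply. You instead replace the \emph{central} variable, sending $x\mapsto y^{k}$ with $t\mid k$ into $R[y;\sigma]\cong R[x;\sigma]$: since $y^{k}$ is central precisely because $t\mid k$, your $\Phi_{k}$ is an honest ring homomorphism, so transferring $pqr=0$ across it costs nothing, and the only remaining work is the bookkeeping that $k>D$ prevents exponent collisions, which you carry out correctly. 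In short, your choice of substitution direction makes the key transfer step automatic exactly where the paper's version requires (and omits) a nontrivial justification, so your route is the cleaner and more robust one.
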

\begin{proof}
It is enough to show that $R[x]$ is strongly $\bar{\sigma}$-symmetric when $R$ is strongly $\sigma$-symmetric. Assume that
\begin{align}
p(y)=&p_{0}+p_{1}y+\cdots +p_{m}y^{m} \notag , \\
q(y)=&q_{0}+q_{1}y+\cdots +q_{n}y^{n} \notag ,\\
r(y)=&r_{0}+r_{1}y+\cdots +r_{l}y^{l} \notag
\end{align}
in $R[x][y;\bar{\sigma}]$ such that $p(y)q(y)r(y)=0$. We also let
\begin{align}
p_{i}=&a_{i_{0}}+a_{i_{1}}x+\cdots +a_{i_{u_{i}}}x^{u_{i}} \notag ,\\
q_{j}=&b_{j_0}+b_{j_1}x+\cdots +b_{j_{v_{j}}}x^{v_{j}} \notag ,\\
r_{k}=&c_{k_0}+c_{k_1}x+\cdots +c_{k_{w_{k}}}x^{w_{k}} \notag
\end{align}
in $R[x]$, where $u_{i}, v_{j}, w_{k}\geq 0$ for each $0\leq i\leq m,~ 0\leq j\leq n$ and $0\leq k\leq l$. We claim that $p(y)r(y)q(y)=0$. Take a positive integer $s$ such that
\begin{center}
$s> max \{deg~ p_i,deg~ q_j, deg~ r_k\}$
\end{center}
for any $i,j$ and $k$, where the degree is considered as polynomials in $R[x]$ and the degree of zero polynomial is zero.
Let
\begin{align}
p(x^{ts+1})=&p_{0}+p_{1}x^{ts+1}+\cdots +p_{m}x^{mts+m} \notag, \\
q(x^{ts+1})=&q_{0}+q_{1}x^{ts+1}+\cdots +q_{n}x^{nts+n} \notag, \\
r(x^{ts+1})=&r_{0}+r_{1}x^{ts+1}+\cdots +r_{l}x^{lts+l} \notag.
\end{align}
Then the set of coefficients of the $p_i$(resp., $q_j$ and $h_k$) equals the set of coefficients of $p(x^{ts+1})$ (resp., $q(x^{ts+1})$ and $r(x^{ts+1})$). Since $p(y)q(y)r(y)=0$, $x$ commutes with elements of $R$ in the polynomial ring $R[x]$ and $\sigma^{ts}=1_R$, we get $p(x^{ts+1})q(x^{ts+1})r(x^{ts+1})=0 \in R[x,\sigma]$. Then $p(x^{ts+1})r(x^{ts+1})q(x^{ts+1})=0 \in R[x,\sigma]$ since $R$ is strongly $\sigma$-symmetric. Thus, we obtain $p(y)r(y)q(y)=0$ as required.
\end{proof}

We note that there is a non-identity endomorphism $\sigma$ of a strongly $\sigma$-symmetric ring $R$ such that $\sigma^{t}=I_{R}$ for some positive integer $t$ by the following example.

\begin{example} \label{nonidentity}\rm
Let $\mathbb{Z}$ be the ring of integers and $\mathbb{Z}_{4}$ be the ring of integers modulo 4. Consider the ring
$$R=\left\{ \left(\begin{array}{cc} a & \bar{b} \\ 0 & a \\ \end{array} \right)\mid a\in \mathbb{Z},\ \bar{b}\in \mathbb{Z}_{4}\right\}$$
and let $\sigma$ be an endomorphism defined by
$\sigma\left(\left(\begin{array}{cc} a & \bar{b}\\ 0 & a \\\end{array}\right)\right)=\left(\begin{array}{cc} a & -\bar{b}\\0 & a\\\end{array}\right)$. By \cite[Example 1.10]{HKR}, $R$ is $\sigma$-Armendariz and we also have $R$ is symmetric since it is commutative. Then by Theorem~\ref{sigmaA}, we obtain $R$ is strongly $\sigma$-symmetric and $\sigma^2=I_{R}$.
\end{example}

Recall that an element $u$ of a ring $R$ is {\emph right regular} if $ur = 0$ implies $r = 0$ for $r\in R$. Left regular elements can be defined similarly. An element is called \emph{regular} if it is both left and right regular.

A ring $R$ is called \emph{left Ore} for given $a, b\in R$ with $b$ regular, there exist $a_1, b_1\in R$ with $b_1$ regular such that $b_1a=a_1b$.
Also recall that the classical left quotient ring $S^{-1}R$ exists iff $S$ is a left Ore set and the set $\bar{S} = \{s + ass(S) \in R/ass(S)\ |\  s \in S\}$ consists
of regular elements (\cite{MR}, Theorem 2.1.12), where $ass(S) := \{r \in  R \ |\  sr = 0 \text{ for some } s \in S\}$. In \cite[Theorem 4.1]{HKKL}), it is proved that $R$ is symmetric if and only if $Q(R)$ is symmetric. In the following theorem, we consider the classical left quotient rings of strongly $\sigma$-symmetric rings.

Let $R$ be a ring with the classical left quotient ring $Q(R)$. Then each automorphism $\sigma$ of $R$ can be extended to $Q(R)$ by setting $\bar\sigma(b^{-1}a)=\sigma(b)^{-1}\sigma(a)$, where $a, b\in R$ with $b$ regular.

\begin{theorem} \label{Quotientring}
Let $R$ be a   ring with an automorphism $\sigma$. If the classical left quotient ring $Q(R)$ of $R$ exists, then $R$ is strongly $\sigma$-symmetric if and only if $Q(R)$ is strongly $\bar\sigma$-symmetric.
\end{theorem}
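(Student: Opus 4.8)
The plan is to treat the two implications separately, as they are of very different difficulty. For the sufficiency $(\Leftarrow)$, I would argue by restriction. The ring $R$ embeds as a subring of $Q(R)$, and by construction $\bar\sigma$ restricts to $\sigma$ on $R$ (indeed $\bar\sigma(b^{-1}a)=\sigma(b)^{-1}\sigma(a)$ reduces to $\sigma$ when $b=1$). Consequently $R[x;\sigma]$ sits inside $Q(R)[x;\bar\sigma]$ as a subring closed under the twisting, since for $a\in R$ the rule $xa=\bar\sigma(a)x=\sigma(a)x$ agrees in both rings. Because the symmetric condition $fgh=0\Rightarrow fhg=0$ passes trivially to subrings (all three products are computed identically in the subring and the overring), the symmetry of $Q(R)[x;\bar\sigma]$ forces the symmetry of $R[x;\sigma]$. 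This is exactly the remark recorded earlier in Section~2 that a subring $S$ with $\sigma(S)\subseteq S$ of a strongly $\sigma$-symmetric ring is again strongly $\sigma$-symmetric, so $R$ is strongly $\sigma$-symmetric and nothing more is needed here.

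For the necessity $(\Rightarrow)$, write $T:=R[x;\sigma]$ and let $S$ denote the set of regular elements of $R$, regarded inside $T$. The strategy is to realize $Q(R)[x;\bar\sigma]$ as the left Ore localization $S^{-1}T$ and then transport the symmetry of $T$ (which holds because $R$ is strongly $\sigma$-symmetric) across this localization. First I would check that each $s\in S$ remains regular in $T$: if $sf=0$ or $fs=0$ for $f=\sum a_ix^i$, comparing coefficients gives $sa_i=0$ or $a_i\sigma^{i}(s)=0$, and since $\sigma$ is an automorphism every $\sigma^{i}(s)$ is again regular in $R$, so $f=0$. Next I would verify that $S$ is a left Ore set in $T$: given $f=\sum a_ix^i\in T$ and $s\in S$, the equation $s'f=f's$ reduces coefficientwise to $s'a_i=a_i'\sigma^{i}(s)$ with each $\sigma^{i}(s)$ regular, which is solvable by the left Ore condition of $R$, and a finite common left multiple of the resulting denominators yields a single $s'\in S$ valid for all $i$. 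Matching denominators by $x$-degree then identifies $S^{-1}T$ with $Q(R)[x;\bar\sigma]$.

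With this identification, suppose $p(x)q(x)r(x)=0$ in $Q(R)[x;\bar\sigma]=S^{-1}T$. Using common left denominators and the left Ore property repeatedly, I would collect all denominators on the far left, reducing the equation to $F=0$ for a suitable numerator element $F\in T$. Since $T$ is symmetric, the correspondingly permuted numerator product also vanishes, and undoing the denominator manipulations returns $p(x)r(x)q(x)=0$, giving the symmetry of $Q(R)[x;\bar\sigma]$. The main obstacle is the bookkeeping forced by the skew multiplication $x\,b^{-1}=\sigma(b)^{-1}x$: one must guarantee that clearing denominators never produces a denominator outside $S$, and that a single regular element can be used simultaneously for the three factors. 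This is precisely where the hypothesis that $\sigma$ is an \emph{automorphism} is essential, since it keeps every $\sigma^{i}(s)$ regular and lets the left Ore condition of $R$ propagate through all powers of $x$; the $\sigma$-compatibility of $R$ from Lemma~\ref{Lemma2-23.10.18} and the annihilator equivalences of Lemma~\ref{Lemma1-23.10.18}(3) are the tools that make the coefficientwise cancellations go through.
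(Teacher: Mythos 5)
Your sufficiency direction is fine and is exactly the paper's (the Section~2 remark that a $\sigma$-stable subring of a strongly $\sigma$-symmetric ring is strongly $\sigma$-symmetric), and your skeleton for necessity --- clear denominators to the left, obtain a numerator identity in $T=R[x;\sigma]$, apply symmetry of $T$, then pass back to fractions --- is also the paper's skeleton; your formal identification of $Q(R)[x;\bar\sigma]$ with an Ore localization of $T$ is harmless but not the crux (the paper simply writes each polynomial with a single common left denominator, $p(x)=\sum u_1^{-1}a_ix^i$, etc.). The genuine gap is the step you dispose of with ``undoing the denominator manipulations returns $p(x)r(x)q(x)=0$.'' That step is not an undoing, and it is where the whole difficulty of the theorem sits. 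Clearing denominators from $p(x)q(x)r(x)$ does \emph{not} produce the product of the three numerators: the left Ore condition replaces the middle and right coefficients by new elements (in the paper's notation, $a_i\sigma^i(v_1)^{-1}=v_2^{-1}a_i'$, $b_j\sigma^j(w_1)^{-1}=w_2^{-1}b_j'$, $a_i'\sigma^i(w_2)^{-1}=w_3^{-1}a_i''$), so the identity one reaches in $T$ is $\big(\sum a_i''x^i\big)\big(\sum b_j'x^j\big)\big(\sum c_kx^k\big)=0$. Symmetry of $T$ then gives $\big(\sum a_i''x^i\big)\big(\sum c_kx^k\big)\big(\sum b_j'x^j\big)=0$; but this is \emph{not} what clearing denominators from $p(x)r(x)q(x)$ would produce, because in $prq$ the denominators are interleaved in a different pattern ($w_1^{-1}$ now meets the $a_i$'s and $v_1^{-1}$ meets the $c_k$'s), so its numerators are different elements of $R$. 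If you try to bridge the two by pure Ore bookkeeping you end up with conjugates of the form $s^{-1}hs$ (a numerator conjugated by a non-central regular denominator) lodged in the middle of the product, which cannot be cancelled. The tools you name at this point --- $\sigma$-compatibility from Lemma~\ref{Lemma2-23.10.18} and the annihilator equivalences of Lemma~\ref{Lemma1-23.10.18} --- are coefficientwise statements and do not resolve this mismatch.

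What actually closes the gap, and is the heart of the paper's proof, is using the symmetry of $T$ in its weaker guises of IFP and reversibility to perform \emph{insertions of specific regular elements} into the vanishing numerator products, chosen so that subsequent left multiplication by inverses reconstitutes the original coefficients. Concretely: since $T$ has IFP, $\big(\sum a_i''x^i\big)\big(\sum c_kx^k\big)\big(\sum b_j'x^j\big)=0$ yields $\big(\sum a_i''\sigma^i(w_2v_1)x^i\big)\big(\sum c_kx^k\big)\big(\sum b_j'x^j\big)=0$, and left multiplication by $(w_3v_2)^{-1}$ turns the leading coefficients back into $a_i$; then reversibility of $T$ plus another IFP insertion of $w_1$, followed by left multiplication by $w_2^{-1}$, turns the $b_j'$ back into $b_j$; the same mechanism is used once more to reinstate $u_1^{-1}$, $w_1^{-1}$, $v_1^{-1}$ in the permuted order and conclude $p(x)r(x)q(x)=0$. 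So the proof is not ``symmetry of $T$ plus localization formalities'': it is symmetry of $T$ exploited three ways (as symmetry, as IFP, as reversibility), with insertions engineered precisely to make the inverses recombine. Your plan becomes correct once this mechanism is supplied, but as written the decisive idea is missing.
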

\begin{proof} It is enough to show that $Q(R)$ is strongly $\bar\sigma$-symmetric whenever $R$ is strongly $\sigma$-symmetric. Let $p(x)=\sum^{m}_{i=0}u_{1}^{-1}a_{i}x^{i}$, $q(x)=\sum^{n}_{j=0}v_{1}^{-1}b_{j}x^{j}$ and $r(x)=\sum^{l}_{k=0}w_{1}^{-1}c_{k}x^{k}\in Q(R)[x;\bar\sigma]$ such that $p(x)q(x)r(x)=0$, where $a_i, b_j, c_k\in R$ and $u, v, w$ are regular elements in $R$ for $0\leq i\leq m$, $0\leq j\leq n$ and $0\leq k\leq l$. Then we obtain
\begin{align}
0=&p(x)q(x)r(x)\notag \\
 =&u_{1}^{-1}\bigg(\sum^{m}_{i=0}a_{i}x^{i}v^{-1}_{1} \bigg)\bigg(\sum^{n}_{j=0}b_{j}x^{j}w^{-1}_{1} \bigg)\bigg(\sum^{l}_{k=0}c_{k}x^{k} \bigg)\notag \\
 =&\bigg(\sum^{m}_{i=0}a_{i}\sigma^{i}(v_{1})^{-1}x^{i}\bigg)\bigg(\sum^{n}_{j=0}b_{j}\sigma^{j}(w_{1})^{-1}x^{j}\bigg)\bigg(\sum^{l}_{k=0}c_{k}x^{k}\bigg)\notag.
\end{align}
There exist $a'_{i}, b'_{j}\in R$ and regular elements $v_{2}, w_{2}\in R$ such that
\begin{equation}
a_{i}\sigma^{i}(v_{1})^{-1}=v_{2}^{-1}a'_{i},\label{ai}
\end{equation}
\begin{equation}
b_{j}\sigma^{j}(w_{1})^{-1}=w_{2}^{-1}b'_{j}\label{bj}
\end{equation}
for $0\leq i\leq m$ and $0\leq j\leq n$. Thus we have
\begin{align}
0=&\bigg(\sum^{m}_{i=0}v_{2}^{-1}a'_{i}x^{i}\bigg)
             \bigg(\sum^{n}_{j=0}w_{2}^{-1}b'_{j}x^{j}\bigg)
             \bigg(\sum^{l}_{k=0}c_{k}x^{k}\bigg)\notag \\
 =&v_{2}^{-1}\bigg(\sum^{m}_{i=0}a'_{i}\sigma^{i}(w_{2})^{-1}x^{i}\bigg)\bigg(\sum^{n}_{j=0}b'_{j}x^{j}\bigg)\bigg(\sum^{l}_{k=0}c_{k}x^{k}\bigg).\notag
\end{align}
There exist $a''_{i}\in R$ and regular element $w_{3}\in R$ such that
\begin{equation}
a'_{i}\sigma^{i}(w_{2})^{-1}=w_{3}^{-1}a''_{i}\label{aii}
\end{equation}
for $0\leq i\leq m$. Then we have
%$$w_{3}^{-1}\bigg(\sum^{m}_{i=0}a''_{i}x^{i}\bigg)\bigg(\sum^{n}_{j=0}b'_{j}x^{j}\bigg)\bigg(\sum^{l}_{k=0}c_{k}x^{k}\bigg)=0$$
$\bigg(\sum^{m}_{i=0}a''_{i}x^{i}\bigg)\bigg(\sum^{n}_{j=0}b'_{j}x^{j}\bigg)\bigg(\sum^{l}_{k=0}c_{k}x^{k}\bigg)=0.$ Using strongly $\sigma$-symmetric property of $R$, we can deduce that
\begin{equation}
\bigg(\sum^{m}_{i=0}a''_{i}x^{i}\bigg)\bigg(\sum^{l}_{k=0}c_{k}x^{k}\bigg)\bigg(\sum^{n}_{j=0}b'_{j}x^{j}\bigg)=0.
\end{equation}
Since $R$ is strongly $\sigma$-IFP, we have
\begin{align}
0=&\bigg(\sum^{m}_{i=0}a''_{i}x^{i}\bigg)w_{2}v_{1}\bigg(\sum^{l}_{k=0}c_{k}x^{k}\bigg)\bigg(\sum^{n}_{j=0}b'_{j}x^{j}\bigg)\notag \\
 =&\bigg(\sum^{m}_{i=0}a''_{i}\sigma^{i}(w_{2}v_{1})x^{i}\bigg)\bigg(\sum^{l}_{k=0}c_{k}x^{k}\bigg)\bigg(\sum^{n}_{j=0}b'_{j}x^{j}\bigg).\label{prq}
\end{align}
By multiplying (\ref{prq}) on the left hand side by $(w_{3}v_{2})^{-1}$ and by using (\ref{ai}) and (\ref{aii}), we obtain
\begin{align}
0=&\bigg(\sum^{m}_{i=0}(w_{3}v_{2})^{-1}a''_{i}\sigma^{i}(w_{2}v_{1})x^{i}\bigg)\bigg(\sum^{l}_{k=0}c_{k}x^{k}\bigg)\bigg(\sum^{n}_{j=0}b'_{j}x^{j}\bigg)\notag \\
 =&\bigg(\sum^{m}_{i=0}a_{i}x^{i}\bigg)\bigg(\sum^{l}_{k=0}c_{k}x^{k}\bigg)\bigg(\sum^{n}_{j=0}b'_{j}x^{j}\bigg)\notag.
\end{align}
Thus we get
\begin{equation}\label{prq1}
\bigg(\sum^{n}_{j=0}b'_{j}x^{j}\bigg)w_{1}\bigg(\sum^{m}_{i=0}a_{i}x^{i}\bigg)\bigg(\sum^{l}_{k=0}c_{k}x^{k}\bigg)=0
\end{equation}
since $R$ is strongly $\sigma$-skew reversible and strongly $\sigma$-IFP.
If we multiply (\ref{prq1}) by $w_{2}^{-1}$ on the left hand side and use (\ref{bj}), then we have
\begin{align}
0=&w_{2}^{-1}\bigg(\sum^{n}_{j=0}b'_{j}\sigma^{j}(w_{1})x^{j}\bigg)\bigg(\sum^{m}_{i=0}a_{i}x^{i}\bigg)\bigg(\sum^{l}_{k=0}c_{k}x^{k}\bigg)\notag \\
 =&\bigg(\sum^{n}_{j=0}b_{j}x^{j}\bigg)\bigg(\sum^{m}_{i=0}a_{i}x^{i}\bigg)\bigg(\sum^{l}_{k=0}c_{k}x^{k}\bigg)\notag \\
 =&\bigg(\sum^{m}_{i=0}a_{i}x^{i}\bigg)\bigg(\sum^{l}_{k=0}c_{k}x^{k}\bigg)\bigg(\sum^{n}_{j=0}b_{j}x^{j}\bigg).\label{lasteq}
\end{align}
Therefore, we get $p(x)r(x)q(x)=0$ from (\ref{lasteq}) by using similar arguments above. Hence, $Q(R)$ is strongly $\bar\sigma$-symmetric.
\end{proof}

%\begin{corollary} (\cite[Theorem 4.1]{HKKL}) Let $R$ be a right Ore ring and $Q(R)$ be the classical right quotient ring of $R$. Then $R$ is symmetric if and only if so is $Q(R)$
%\end{corollary}

Let $R$ be a ring with an automorphism $\sigma$ and suppose that $\Delta$ is a multiplicatively closed subset of $R$ consisting of all central regular elements. Then the map $\bar{\sigma}:\Delta^{-1}R\rightarrow \Delta^{-1}R$ defined by $\bar{\sigma}(u^{-1}r)=\sigma(u)^{-1}\sigma(r)$ is also an automorphism, where $r\in R$ and $u\in R$ is a regular element.

Note that, in the following proposition, we obtain the results proved in \cite[Proposition 3.6]{BHKKL} and \cite[Proposition 3.8]{JKKL}  without the condition '$\sigma(u)=u$' for any central regular element $u$. Therefore, we obtain a generalization of \cite[Lemma 3.2]{HKKL} without any condition on $\sigma$.

\begin{proposition} \label{delta}
Let $R$ be a ring with an automorphism $\sigma$.  Then $R$ is strongly $\sigma$-symmetric if and only if $\Delta^{-1}R$ is strongly $\bar{\sigma}$-symmetric.
\end{proposition}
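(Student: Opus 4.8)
The plan is to prove both implications, with the reverse implication essentially immediate and the forward implication carrying all the weight. For the ``if'' direction, note that since every element of $\Delta$ is regular, the canonical map $R \to \Delta^{-1}R$, $a \mapsto 1^{-1}a$, is an injective ring homomorphism, so we may regard $R$ as a subring of $\Delta^{-1}R$. Moreover $\bar\sigma(a)=\bar\sigma(1^{-1}a)=\sigma(1)^{-1}\sigma(a)=\sigma(a)$, so $\bar\sigma|_R=\sigma$ and in particular $\bar\sigma(R)=\sigma(R)=R$. Thus $R[x;\sigma]$ is a $\bar\sigma$-invariant subring of $(\Delta^{-1}R)[x;\bar\sigma]$, and since strong symmetry passes to invariant subrings (as remarked after Definition~\ref{def}), $\Delta^{-1}R$ being strongly $\bar\sigma$-symmetric forces $R$ to be strongly $\sigma$-symmetric.

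For the ``only if'' direction I would first record the two structural facts that make the central localization tractable. Because $\sigma$ is an automorphism preserving centrality and regularity, $\sigma(\Delta)=\Delta$, so $\sigma^i(u)\in\Delta$ for every $u\in\Delta$ and every $i\ge 0$; this not only guarantees that $\bar\sigma$ is well defined but, more importantly, lets me move denominators across $x$ via $x^i u^{-1}=\sigma^i(u)^{-1}x^i$ while staying inside $\Delta$. Combined with centrality (so $u^{-1}a=au^{-1}$ for all $a$), this means any $p(x)=\sum_i (u^{-1}a_i)x^i\in(\Delta^{-1}R)[x;\bar\sigma]$ can, after choosing a common denominator, be written as $p(x)=u^{-1}P(x)$ with $u\in\Delta$ central regular and $P(x)\in R[x;\sigma]$.

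The core of the argument then mirrors the proof of Theorem~\ref{Quotientring}, but the centrality of $\Delta$ removes every Ore-condition step. Starting from $p(x)q(x)r(x)=0$, I would write $p=u^{-1}P$, $q=v^{-1}Q$, $r=w^{-1}R_0$ and migrate all denominators to the far left using $x^iu^{-1}=\sigma^i(u)^{-1}x^i$. Since each factor produced this way lies in $\Delta$ and is invertible in $\Delta^{-1}R$, this is a chain of equivalences ending in a genuine three-factor identity $P'(x)Q'(x)R_0(x)=0$ inside $R[x;\sigma]$, where $P',Q'$ arise from $P,Q$ by multiplying coefficients by central regular elements. Strong $\sigma$-symmetry of $R$, i.e. symmetry of $R[x;\sigma]$, now yields $P'(x)R_0(x)Q'(x)=0$, after which I would reverse the clearing, re-inserting the central regular denominators in the transposed positions and dividing back to reach $p(x)r(x)q(x)=0$.

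The delicate point, and the step I expect to be the main obstacle, is precisely this last reversal. When $Q$ and $R_0$ are transposed the denominators $v^{-1}$ and $w^{-1}$ end up in different positions relative to the remaining factors, and because central regular elements do not commute with $x$ the two cleared expressions differ by per-coefficient central regular weights that cannot simply be cancelled out of a sum. To handle this I would lean on the auxiliary properties that strong $\sigma$-symmetry supplies: $R$ is $\sigma$-compatible by Lemma~\ref{Lemma2-23.10.18} and, via the implication diagram, both strongly $\sigma$-IFP and strongly $\sigma$-skew reversible. As in the proof of Theorem~\ref{Quotientring}, strong $\sigma$-IFP lets me insert the missing central regular factors into $P'(x)R_0(x)Q'(x)=0$ at the correct spot, while strong $\sigma$-skew reversibility lets me reorder the resulting factors; together with the annihilator equivalences of Lemma~\ref{Lemma1-23.10.18}(3) these absorb the mismatched weights and reconstitute exactly $p(x)r(x)q(x)$, completing the proof.
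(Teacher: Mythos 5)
Your proposal is correct and takes essentially the same route as the paper's own proof: clear the denominators using centrality (with per-coefficient central regular weights) to obtain a genuine three-factor identity in $R[x;\sigma]$, apply symmetry of $R[x;\sigma]$, and then use strongly $\sigma$-IFP together with strong $\sigma$-skew reversibility to re-insert the central regular factors in the transposed positions and recover $p(x)r(x)q(x)=0$ --- exactly the mechanism of the paper's argument, which likewise mirrors Theorem \ref{Quotientring}. Your explicit subring argument for the converse also agrees with the paper's (implicit) treatment via the remark that strongly $\sigma$-symmetric passes to $\sigma$-invariant subrings.
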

\begin{proof}
Let $p(x)q(x)r(x)=0$, where $p(x)=\sum^{m}_{i=0}u^{-1}a_ix^{i}$, $q(x)=\sum^{n}_{j=0}v^{-1}b_jx^{j}$ and $r(x)=\sum^{l}_{k=0}w^{-1}c_kx^{k}\in \Delta^{-1}R[x;\bar\sigma]$ with  $a_i, b_j, c_k\in R$ and $u, v, w$ are central regular elements in $R$ for all $0\leq i\leq m$,  $0\leq j\leq n$ and $0\leq k\leq l$. Then we have
\begin{align}
0=&p(x)q(x)r(x)\notag \\
 =&\bigg(\sum^{m}_{i=0}u^{-1}a_ix^{i}\bigg)\bigg(\sum^{n}_{j=0}v^{-1}b_jx^{j}\bigg)\bigg(\sum^{l}_{k=0}w^{-1}c_kx^{k}\bigg)\notag \\
 =&u^{-1}\bigg(\sum^{m}_{i=0}a_{i}\sigma^{i}(v)^{-1}x^{i} \bigg)\bigg(\sum^{n}_{j=0}b_{j}\sigma^{j}(w)^{-1}x^{j} \bigg)\bigg(\sum^{l}_{k=0}c_{k}x^{k} \bigg)\notag \\
 =&u^{-1}\bigg(\sum^{m}_{i=0}\sigma^{i}(v)^{-1}a_{i}x^{i}\bigg)\bigg(\sum^{n}_{j=0}\sigma^{j}(w)^{-1}b_{j}x^{j}\bigg)\bigg(\sum^{l}_{k=0}c_{k}x^{k}\bigg)\notag.
\end{align}
There exist $a'_i, b'_j\in R$ and central regular elements $v_1, w_1\in R$ such that
\begin{equation}
\sigma^{i}(v)^{-1}a_{i}=v^{-1}_1a'_i \label{centraleq1}
\end{equation}
\begin{equation}
\sigma^{j}(w)^{-1}b_{j}=w^{-1}_1b'_j \label{centraleq2}
\end{equation}
for all $0\leq i\leq m$ and $0\leq j\leq n$. Then we have
\begin{align}
0=&u^{-1}v^{-1}_1\bigg(\sum^{m}_{i=0}a'_{i}\sigma^{i}(w_1)^{-1}x^{i}\bigg)\bigg(\sum^{n}_{j=0}b'_{j}x^{j}\bigg)\bigg(\sum^{l}_{k=0}c_{k}x^{k}\bigg)\notag \\
 =&u^{-1}v^{-1}_1\bigg(\sum^{m}_{i=0}\sigma^{i}(w_1)^{-1}a'_{i}x^{i}\bigg)\bigg(\sum^{n}_{j=0}b'_{j}x^{j}\bigg)\bigg(\sum^{l}_{k=0}c_{k}x^{k}\bigg)\notag.
\end{align}
There exist $a''_{i}\in R$ and central regular element $w_2\in R$ such that
\begin{equation}
\sigma^{i}(w_1)^{-1}a'_{i}=w^{-1}_2a''_{i}\label{centraleq3}
\end{equation}
for all $0\leq i\leq m$. Then we obtain
$$
0=u^{-1}v^{-1}_1w^{-1}_2\bigg(\sum^{m}_{i=0}a''_{i}x^{i}\bigg)\bigg(\sum^{n}_{j=0}b'_{j}x^{j}\bigg)\bigg(\sum^{l}_{k=0}c_{k}x^{k}\bigg).\notag \\
$$
Hence, we get
$\bigg(\sum^{m}_{i=0}a''_{i}x^{i}\bigg)\bigg(\sum^{n}_{j=0}b'_{j}x^{j}\bigg)\bigg(\sum^{l}_{k=0}c_{k}x^{k}\bigg)=0.$
Since $R$ is strongly $\sigma$-symmetric, we can deduce that
\begin{align*}
0=&\bigg(\sum^{m}_{i=0}a''_{i}x^{i}\bigg)\bigg(\sum^{l}_{k=0}c_{k}x^{k}\bigg)\bigg(\sum^{n}_{j=0}b'_{j}x^{j}\bigg).
\end{align*}
By using the fact that $R$ is strongly $\sigma$-IFP, we have \begin{align}
0=&\bigg(\sum^{m}_{i=0}a''_{i}x^{i}\bigg)vw_1\bigg(\sum^{l}_{k=0}c_{k}x^{k}\bigg)\bigg(\sum^{n}_{j=0}b'_{j}x^{j}\bigg).\label{delta1}
\end{align}
If we multiply (\ref{delta1}) by $(w_2v_1)^{-1}$ on the left hand side and if we use (\ref{centraleq1}) and (\ref{centraleq3}), we get
\begin{align}
0=&(w_2v_1)^{-1}\bigg(\sum^{m}_{i=0}a''_{i}\sigma^{i}(vw_1)x^{i}\bigg)\bigg(\sum^{l}_{k=0}c_{k}x^{k}\bigg)\bigg(\sum^{n}_{j=0}b'_{j}x^{j}\bigg)\notag\\
 =&\bigg(\sum^{m}_{i=0}a_{i}x^{i}\bigg)\bigg(\sum^{l}_{k=0}c_{k}x^{k}\bigg)\bigg(\sum^{n}_{j=0}b'_{j}x^{j}\bigg).\notag
\end{align}
Since $R$ is strongly $\sigma$-skew reversible and strongly $\sigma$-IFP, we obtain that
\begin{align}
0=&\bigg(\sum^{n}_{j=0}b'_{j}x^{j}\bigg)w\bigg(\sum^{m}_{i=0}a_{i}x^{i}\bigg)\bigg(\sum^{l}_{k=0}c_{k}x^{k}\bigg).\label{delta2}
\end{align}
By multiplying (\ref{delta2}) with $w^{-1}_1$ on the left hand side and using (\ref{centraleq2}), we get
\begin{align}
0=&w^{-1}_1\bigg(\sum^{n}_{j=0}b'_{j}x^{j}\bigg)w\bigg(\sum^{m}_{i=0}a_{i}x^{i}\bigg)\bigg(\sum^{l}_{k=0}c_{k}x^{k}\bigg)\notag \\
 =&\bigg(\sum^{n}_{j=0}w^{-1}_1b'_{j}\sigma^{j}(w)x^{j}\bigg)\bigg(\sum^{m}_{i=0}a_{i}x^{i}\bigg)\bigg(\sum^{l}_{k=0}c_{k}x^{k}\bigg)\notag \\
 =&\bigg(\sum^{n}_{j=0}b_{j}x^{j}\bigg)\bigg(\sum^{m}_{i=0}a_{i}x^{i}\bigg)\bigg(\sum^{l}_{k=0}c_{k}x^{k}\bigg).\notag
\end{align}
Then we have $\bigg(\sum^{m}_{i=0}a_{i}x^{i}\bigg)\bigg(\sum^{l}_{k=0}c_{k}x^{k}\bigg)\bigg(\sum^{n}_{j=0}b_{j}x^{j}\bigg)=0$. This result leads us  $p(x)r(x)q(x)=0$ by using similar arguments above. Therefore, $\Delta^{-1}R$ is strongly $\bar\sigma$-symmetric.
\end{proof}

The ring of Laurent polynomials in $x$ over $R$ is denoted by $R[x,x^{-1}]$ and consists of all formal sums $\sum^{n}_{i=k}a_{i}x^{i}$ with obvious addition and multiplication, where $a_{i}\in R$ and $k, n$ are (possibly negative) integers. Note that the map $\bar{\sigma}:R[x,x^{-1}]\rightarrow R[x,x^{-1}]$ defined by $\bar{\sigma}(\sum^{n}_{i=k}a_{i}x^{i})=\sum^{n}_{i=k}\sigma(a_{i})x^{i}$ is an endomorphism of $R[x,x^{-1}]$ which extends $\sigma$.

\begin{corollary}
Let $R$ be a ring and $\sigma$ be an endomorphism of $R$. Then $R[x]$ is strongly $\bar{\sigma}$-symmetric if and only if so is $R[x,x^{-1}]$.
\end{corollary}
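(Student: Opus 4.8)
The plan is to realize $R[x,x^{-1}]$ as the central localization $\Delta^{-1}R[x]$ of $R[x]$ at the multiplicatively closed set $\Delta=\{x^{n}:n\geq 0\}$, whose elements are central units (in particular central regular elements) of $R[x]$. The crucial observation is that $\bar\sigma$ fixes $x$: since $x=0+1\cdot x$ we get $\bar\sigma(x)=\sigma(1)x=x$, and therefore $yx=\bar\sigma(x)y=xy$ when $y$ denotes the skew variable. Hence $x$ is a central element of both skew polynomial rings $R[x][y;\bar\sigma]$ and $R[x,x^{-1}][y;\bar\sigma]$, and it is regular in each (indeed a unit in the latter). This is exactly what will let me move powers of $x$ freely past the skew variable and cancel them at the end.

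For the backward implication, I note that $R[x]$ is a subring of $R[x,x^{-1}]$ with $\bar\sigma(R[x])\subseteq R[x]$. By the remark that strong $\sigma$-symmetry is inherited by $\sigma$-invariant subrings, it follows immediately that if $R[x,x^{-1}]$ is strongly $\bar\sigma$-symmetric then so is $R[x]$.

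For the forward implication, I assume $R[x]$ is strongly $\bar\sigma$-symmetric, i.e. $R[x][y;\bar\sigma]$ is symmetric, and take $p(y),q(y),r(y)\in R[x,x^{-1}][y;\bar\sigma]$ with $p(y)q(y)r(y)=0$. Since the finitely many coefficients of $p,q,r$ are Laurent polynomials, I can choose $N\geq 0$ large enough that $x^{N}p(y)$, $x^{N}q(y)$ and $x^{N}r(y)$ all lie in $R[x][y;\bar\sigma]$. As $x$ is central in the skew polynomial ring, $x^{3N}p(y)q(y)r(y)=\bigl(x^{N}p(y)\bigr)\bigl(x^{N}q(y)\bigr)\bigl(x^{N}r(y)\bigr)=0$, and symmetry of $R[x][y;\bar\sigma]$ then gives $\bigl(x^{N}p(y)\bigr)\bigl(x^{N}r(y)\bigr)\bigl(x^{N}q(y)\bigr)=x^{3N}p(y)r(y)q(y)=0$. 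Since $x^{3N}$ is a central unit in $R[x,x^{-1}][y;\bar\sigma]$, cancelling it yields $p(y)r(y)q(y)=0$, as required.

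The step requiring the most care is the identity $\bar\sigma(x)=x$ together with its consequence that $x$ is central and regular in the skew polynomial rings; this single fact underlies both the clearing of denominators (pulling $x^{N}$ past $y$) and the final cancellation. I would also point out that one cannot simply invoke Proposition~\ref{delta} with $R$ replaced by $R[x]$, because that result requires $\bar\sigma$ to be an automorphism, whereas strong $\bar\sigma$-symmetry of $R[x]$ only forces $\bar\sigma$ to be a monomorphism by Lemma~\ref{Lemma1-23.10.18}(2); the direct localization argument above sidesteps this restriction and works for an arbitrary endomorphism $\sigma$.
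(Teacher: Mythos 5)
Your proof is correct, and it differs from the paper's in a useful way. The paper disposes of this corollary in one line: it writes $R[x,x^{-1}]=\Delta^{-1}R[x]$ for $\Delta=\{1,x,x^{2},\ldots\}$ and cites Proposition~\ref{delta}. You use the same localization picture, but instead of invoking that proposition you give a direct computation: since $\bar{\sigma}(x)=x$, the element $x$ is central and regular in $R[x][y;\bar{\sigma}]$ and a central unit in $R[x,x^{-1}][y;\bar{\sigma}]$, so you can clear denominators by $x^{N}$, apply symmetry of $R[x][y;\bar{\sigma}]$, and cancel the central unit $x^{3N}$; the converse follows, as in the paper's standing remark, because $R[x]$ is a $\bar{\sigma}$-invariant subring of $R[x,x^{-1}]$. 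Your route buys two real advantages. First, your closing observation pinpoints an actual hypothesis mismatch in the paper's own proof: Proposition~\ref{delta} is stated for an \emph{automorphism}, whereas $\bar{\sigma}$ on $R[x]$ is an automorphism only when $\sigma$ is one; the corollary assumes merely an endomorphism, and strong $\bar{\sigma}$-symmetry forces $\bar{\sigma}$ to be injective (Lemma~\ref{Lemma1-23.10.18}) but not surjective, so the citation does not literally apply in the stated generality, while your argument does. (One could also repair the paper's proof by noting that the proof of Proposition~\ref{delta} only needs $\bar{\sigma}$ to map $\Delta$ into invertible elements, which holds here since $\bar{\sigma}$ fixes $\Delta$ pointwise, but that requires reopening the proposition's proof.) Second, your argument is much lighter: because the elements of $\Delta$ are central units of the Laurent ring fixed by $\bar{\sigma}$, you avoid entirely the Ore-style common-denominator manipulations and the appeals to strongly $\sigma$-IFP and strongly $\sigma$-skew reversible properties that carry the proof of Proposition~\ref{delta}.
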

\begin{proof}
Let $\Delta=\{1, x, x^{2}, \cdots\}$. Then clearly $\Delta$ is a multiplicatively closed subset of $R[x]$ and $R[x,x^{-1}]=\Delta^{-1}R[x]$. By the Proposition \ref{delta}, it follows that $R[x, x^{-1}]$ is strongly $\bar{\sigma}$-symmetric.
\end{proof}

Therefore, we obtain generalizations of the following results.

\begin{corollary}(\cite[Lemma 3.2]{HKKL}) (1) Let $R$ be a ring and $\Delta$ be a multiplicatively closed subset of $R$ consisting of central regular elements. Then $R$ is symmetric if and only if so is $\Delta^{-1}R$.\\
(2) For a ring $R$, $R[x]$ is symmetric if and only if so is $R[x;x^{-1}]$.
\end{corollary}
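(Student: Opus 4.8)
The plan is to derive both parts from the fact that $\Delta$ consists of \emph{central} regular elements, which collapses the argument to something far simpler than the proof of Proposition~\ref{delta}: when denominators are central and regular they may be slid freely past numerators and cancelled, so none of the strongly $\sigma$-IFP or strongly $\sigma$-skew reversible ingredients are needed. I would first reduce part~(2) to part~(1). Taking $\Delta=\{1,x,x^{2},\dots\}$, a multiplicatively closed set of central regular elements of $R[x]$, one has $\Delta^{-1}R[x]=R[x,x^{-1}]$; hence part~(1), applied to the ring $R[x]$, yields that $R[x]$ is symmetric if and only if $R[x,x^{-1}]$ is symmetric. So it suffices to prove part~(1), exactly as the preceding corollary deduced its Laurent statement from Proposition~\ref{delta}.

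For the ``if'' direction of part~(1), I would observe that the localization map $R\to\Delta^{-1}R$, $r\mapsto u^{-1}r$ with $u=1$, is injective: if $r/1=0$ then $tr=0$ for some $t\in\Delta$, and regularity of $t$ forces $r=0$. Thus $R$ is (isomorphic to) a subring of $\Delta^{-1}R$, and since the symmetric condition $abc=0\Rightarrow acb=0$ is inherited by any subring, symmetry of $\Delta^{-1}R$ gives symmetry of $R$.

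For the ``only if'' direction, suppose $R$ is symmetric and take arbitrary elements $\alpha=u^{-1}a$, $\beta=v^{-1}b$, $\gamma=w^{-1}c$ of $\Delta^{-1}R$ with $a,b,c\in R$ and $u,v,w\in\Delta$. Because $u,v,w$ are central, their inverses are central in $\Delta^{-1}R$, so $\alpha\beta\gamma=(uvw)^{-1}abc$. As $uvw$ is regular, $\alpha\beta\gamma=0$ forces $abc=0$ in $R$ (via the injectivity above). Symmetry of $R$ then gives $acb=0$, whence $\alpha\gamma\beta=(uwv)^{-1}acb=(uvw)^{-1}acb=0$. Since every element of $\Delta^{-1}R$ has the stated form, $\Delta^{-1}R$ is symmetric.

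I do not expect a genuine obstacle here; the only points requiring care are the injectivity of the localization map, which is exactly where regularity of the elements of $\Delta$ enters, and the free commutation of denominators, which is exactly where centrality enters. This is precisely the degree-zero specialization of the manipulation carried out in the proof of Proposition~\ref{delta}, with the strongly $\sigma$-IFP and strongly $\sigma$-skew reversible steps rendered unnecessary by centrality; it thereby recovers \cite[Lemma 3.2]{HKKL}.
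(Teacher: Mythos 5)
Your proof is correct, and every step checks: injectivity of $r\mapsto r/1$ is exactly where regularity enters, symmetry passes to subrings, centrality gives $\alpha\beta\gamma=(uvw)^{-1}abc$, and part (2) follows from part (1) with $\Delta=\{1,x,x^{2},\dots\}$ since $\Delta^{-1}R[x]=R[x,x^{-1}]$. However, it is a genuinely different route from the paper's: the paper writes no proof for this corollary at all, but states it (with the citation \cite[Lemma 3.2]{HKKL}) as a known result which Proposition~\ref{delta} and the preceding Laurent corollary are said to generalize. It is worth noting that the corollary is not a formal specialization of Proposition~\ref{delta}: taking $\sigma$ to be the identity there yields ``$R[x]$ is symmetric if and only if $(\Delta^{-1}R)[x]$ is symmetric'', a statement about polynomial rings which is not literally statement (1), since symmetry of $R$ and symmetry of $R[x]$ are inequivalent in general by \cite[Example 3.1]{HKKL}. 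What is true is that the degree-zero portion of the proof of Proposition~\ref{delta}, with the strongly $\sigma$-IFP and strongly $\sigma$-skew reversible steps rendered vacuous, collapses to exactly your argument---as you yourself observe. So your elementary, self-contained proof supplies the derivation that the paper delegates to the citation; what the paper's framing buys instead is the presentation of the classical lemma as the commutative shadow of the skew-polynomial result, rather than an independent proof of it.
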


Let $R$ be a ring and $\sigma$ be a monomorphism of $R$. We consider the Jordan's construction of $R$ by $\sigma$, which is the minimal extension of $R$ to which $\sigma$ extends  as an automorphism. Let $A(R,\sigma)$ be the subset $\{x^{-i}rx^{i}\vert \ r\in R \mbox{ and } i\geq 0\}$ of the skew Laurent polynomial ring $R[x,x^{-1};\sigma]$, where $\sigma$ is a monomorphism of $R$.
Multiplication is subject to $xr=\sigma(r)x$ and $rx^{-1}=x^{-1}\sigma(r)$ for all $r\in R$. Also note that $x^{-i}rx^{i}=x^{-(i+j)}\sigma^{j}(r)x^{i+j}$ for each $j\geq 0$. It follows that $A(R,\sigma)$ forms a subring of $R[x,x^{-1};\sigma]$ with the following operations: $x^{-i}rx^{i}+x^{-j}sx^{j}=x^{-(i+j)}(\sigma^{j}(r)+\sigma^{i}(s))x^{(i+j)}$ and $(x^{-i}rx^{i})(x^{-j}sx^{j})=x^{-(i+j)}\sigma^{j}(r)\sigma^{i}(s)x^{(i+j)}$ for $r,s \in R$ and $i,j\geq 0$. Furthermore, $A(R,\sigma)$ is an extension of $R$ by $\sigma$ and the map $\sigma$ can be extended to an automorphism $\bar{\sigma}$ of $A(R,\sigma)$ defined by $\bar{\sigma}(x^{-i}rx^{i})=x^{-i}\sigma(r)x^{i}$. In \cite{J}, Jordan proved that for any pair of $(R,\sigma)$ such an extension always exists. $A(R,\sigma)$ is called {\em Jordan extension of $R$ by $\sigma$}.

\begin{proposition}\label{Jordanext}
Let $R$ be a ring and $\sigma$ be a monomorphism of the ring $R$. Then $R$ is strongly $\sigma$-symmetric if and only if $A(R,\sigma)$ is a strongly $\bar{\sigma}$-symmetric.
\end{proposition}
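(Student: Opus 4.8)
The plan is to exploit the fact that the Jordan extension $A(R,\sigma)$ is a directed union of isomorphic copies of the pair $(R,\sigma)$; this will reduce the symmetry of $A(R,\sigma)[y;\bar\sigma]$ to that of $R[y;\sigma]$, which we are given. For the easy implication, I would identify $R$ with $\{x^{0}rx^{0}\mid r\in R\}\subseteq A(R,\sigma)$ and note that this is a subring closed under $\bar\sigma$, since $\bar\sigma(R)=\sigma(R)\subseteq R$ and $\bar\sigma|_{R}=\sigma$. Because the strongly $\sigma$-symmetric property is inherited by subrings $S$ with $\sigma(S)\subseteq S$ (as recorded after Definition~\ref{def}), it follows at once that if $A(R,\sigma)$ is strongly $\bar\sigma$-symmetric then $R$ is strongly $\sigma$-symmetric.

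For the nontrivial implication, assume $R$ is strongly $\sigma$-symmetric. First I would set up, for each $i\geq 0$, the subset $R_{i}=\{x^{-i}rx^{i}\mid r\in R\}$ and check, using the given operations on $A(R,\sigma)$, that the map $\phi_{i}\colon R\to R_{i}$, $\phi_{i}(r)=x^{-i}rx^{i}$, is a ring isomorphism satisfying $\bar\sigma\circ\phi_{i}=\phi_{i}\circ\sigma$; in particular $(R_{i},\bar\sigma|_{R_{i}})\cong(R,\sigma)$ and $R_{i}$ is $\bar\sigma$-invariant. Using the identity $x^{-i}rx^{i}=x^{-(i+1)}\sigma(r)x^{i+1}$, these subrings form an ascending chain $R_{0}\subseteq R_{1}\subseteq\cdots$ whose union is exactly $A(R,\sigma)$.

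Now take $p(y),q(y),r(y)\in A(R,\sigma)[y;\bar\sigma]$ with $p(y)q(y)r(y)=0$. Since $p,q,r$ have only finitely many coefficients and each such coefficient lies in some $R_{i}$, the ascending-chain property places all of them in a single $R_{N}$ (with $N$ the maximum of the finitely many indices that occur). Hence $p,q,r\in R_{N}[y;\bar\sigma]$, and because $R_{N}$ is $\bar\sigma$-invariant this is a subring of $A(R,\sigma)[y;\bar\sigma]$ in which the products are computed identically. Since $(R_{N},\bar\sigma|_{R_{N}})\cong(R,\sigma)$ and $R$ is strongly $\sigma$-symmetric, $R_{N}$ is strongly $\bar\sigma|_{R_{N}}$-symmetric, so $R_{N}[y;\bar\sigma]$ is symmetric; therefore $p(y)r(y)q(y)=0$ in $R_{N}[y;\bar\sigma]$, and hence in $A(R,\sigma)[y;\bar\sigma]$, which is the claim.

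I expect the main obstacle to be purely organizational rather than computational: it lies in verifying carefully the \emph{local} description of $A(R,\sigma)$, namely that each $R_{i}$ is a $\bar\sigma$-stable subring isomorphic to $(R,\sigma)$, that the $R_{i}$ form an increasing chain with union $A(R,\sigma)$, and consequently that any finite family of coefficients is contained in one $R_{N}$. Once this directed-union structure is established, the transfer of symmetry is immediate and no genuinely hard estimate or computation remains.
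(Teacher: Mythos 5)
Your proof is correct and takes essentially the same approach as the paper: the paper likewise uses the identity $x^{-i}rx^{i}=x^{-(i+j)}\sigma^{j}(r)x^{i+j}$ to rewrite all (finitely many) coefficients of $p,q,r$ with a common exponent $\mu$, so that everything lives in the $\bar\sigma$-stable copy $x^{-\mu}Rx^{\mu}\cong(R,\sigma)$, applies strong $\sigma$-symmetry there, and transports the conclusion back, with the converse handled by the same subring-inheritance remark you cite. Your directed-union phrasing of the chain $R_{0}\subseteq R_{1}\subseteq\cdots$ is just a more structural packaging of that identical reduction.
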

\begin{proof}
Suppose that $R$ is strongly $\sigma$-symmetric and let $p(y)q(y)r(y)=0$, where $p(y)=\sum^{m}_{i=0}a_{i}y^{i}, q(y)=\sum^{n}_{j=0}b_{j}y^{j}$ and $r(y)=\sum^{l}_{k=0}c_{k}y^{k}$ in $A(R,\sigma)[y;\bar{\sigma}]$ such that $a_{i}=x^{-u_i}a'_{i}x^{u_i}$, $b_{j}=x^{-v_j}b'_{j}x^{v_j}$, $c_{k}=x^{-w_k}c'_{k}x^{w_k}$ for $a'_{i}, b'_{j}, c'_{k} \in R$ and $u_i, v_j,w_k\geq 0$ for all $0\leq i\leq m$, $0\leq j\leq n$ and $0\leq k\leq l$. Then we have
$$a_{i}=x^{-\mu}a''_{i}x^{\mu},~~b_{j}=x^{-\mu}b''_{j}x^{\mu}~~\mbox{and}~~c_{k}=x^{-\mu}c''_{k}x^{\mu}$$
for some $\mu\geq 0$, where $a''_{i}, b''_{j}, c''_{k} \in R$ for all $0\leq i\leq m$, $0\leq j\leq n$ and $0\leq k\leq l$. Since $p(y)q(y)r(y)=0$, we have
$$\sum^{m+n+l}_{t=0} \bigg( \sum_{t=s+k}\bigg(\sum_{s=i+j} a_{i}\bar\sigma^{i}(b_{j})\bar\sigma^{s}(c_{k})\bigg) \bigg) y^{t}=0.$$
Thus
\begin{equation}\label{jordan1}
\sum_{t=s+k}\sum_{s=i+j} a_{i}\bar\sigma^{i}(b_{j})\bar\sigma^{s}(c_{k})=\sum_{t=s+k}\sum_{s=i+j} x^{-\mu}a''_{i}\sigma^{i}(b''_{j})\sigma^{s}(c''_{k})x^{\mu}=0.
\end{equation}
Let $p'(y)=\sum^{m}_{i=0}a''_{i}y^{i},~q'(y)=\sum^{n}_{j=0}b''_{j}y^{j}$ and $r'(y)=\sum^{l}_{k=0}c''_{k}y^{k}\in R[x;\sigma]$. Then by (\ref{jordan1}), we get $p'(y)q'(y)r'(y)=0$ and so $p'(y)r'(y)q'(y)=0$ since $R$ is strongly $\sigma$-symmetric. Therefore, $p(y)r(y)q(y)=0$. The converse is obvious since $R$ is a subring of a strongly $\bar{\sigma}$-symmetric ring.
\end{proof}

Let $R$ be a ring with an ideal $I$ and $\sigma$ be an endomorphism of $R$. If $I$ is a $\sigma$-ideal of $R$ (i.e., $\sigma(I)\subseteq I$), then $\bar{\sigma}:R/I\rightarrow R/I$ defined by $\bar{\sigma}(a+I)=\sigma(a)+I$ for $a\in R$ is an endomorphism of $R/I$. Note that $R/I$ need not be a strongly $\bar{\sigma}$-symmetric ring for every ideal $I$ of a strongly $\sigma$-symmetric ring $R$. Indeed, if $R$ is the ring of quaternions with integer coefficients and $\sigma$ is a monomorphism of $R$, then $R$ is a domain and so strongly $\sigma$-symmetric; while for any odd prime integer $q$, we have $R/qR\cong Mat_{2}(\mathbb{Z}_{q})$ by the arguments in \cite[Exercise 3A]{GW}. Notice that $Mat_{2}(\mathbb{Z}_{q})$ is not strongly $\bar{\sigma}$-symmetric since it is not abelian and thus the factor ring $R/qR$ is not strongly $\bar{\sigma}$-symmetric.
But we have an affirmative answer for the following situation.

Recall first that for a subset $S$ of a ring $R$, the set $r_{R}(S)=\{c\in R\mid Sc=0\}$~(resp., $l_{R}(S)=\{c\in R\mid cS=0\}$) is called the \emph{right} (resp., \emph{left}) \emph{annihilator} of $S$ in $R$.

\begin{proposition}\label{factor}
Let $R$ be ring and $\sigma$ be an endomorphism of $R$. Suppose that $R$ is a strongly $\sigma$-symmetric ring. If $I$ is a one-sided annihilator in $R$ and $\sigma(I)\subseteq I$, then $R/I$ is strongly $\bar{\sigma}$-symmetric.
\end{proposition}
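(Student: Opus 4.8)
The plan is to reduce the whole statement to a single application of Lambek's permutation characterization of symmetric rings, applied not to $R$ but to the skew polynomial ring $R[x;\sigma]$, which is symmetric by the very definition of ``strongly $\sigma$-symmetric.'' First I would remove the left/right ambiguity in the hypothesis. Since $R$ is strongly $\sigma$-symmetric it is symmetric, hence reversible, so for any subset $S\subseteq R$ one has $l_R(S)=r_R(S)$ (for each $c$, $cs=0$ for all $s\in S$ iff $sc=0$ for all $s\in S$). Thus, whether $I$ is a left or a right annihilator, I may fix a set $S$ with $I=r_R(S)$, i.e.\ $sz=0$ for all $s\in S$ and all $z\in I$. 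The assumption $\sigma(I)\subseteq I$ is what makes $\bar\sigma$ a well-defined endomorphism of $R/I$, so that $(R/I)[x;\bar\sigma]$ is the ring whose symmetry I must establish.

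Next I would take $p(x)=\sum_i a_ix^i$, $q(x)=\sum_j b_jx^j$, $r(x)=\sum_k c_kx^k$ in $R[x;\sigma]$ with $\overline{p(x)}\,\overline{q(x)}\,\overline{r(x)}=\bar 0$ in $(R/I)[x;\bar\sigma]$; equivalently, every coefficient of $p(x)q(x)r(x)$ lies in $I$. Fixing $s\in S$ and viewing it as a constant in $R[x;\sigma]$, left multiplication by $s$ carries no $\sigma$-twist, so the coefficient of $x^t$ in $s\,p(x)q(x)r(x)$ is $s$ times the corresponding coefficient of $p(x)q(x)r(x)$, which is $0$ because that coefficient lies in $r_R(S)$. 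Hence I obtain the genuine equation $s\,p(x)\,q(x)\,r(x)=0$ in $R[x;\sigma]$. Since $R[x;\sigma]$ is symmetric, Lambek's result \cite[Proposition 1]{L} applies to the four factors $s,\,p(x),\,q(x),\,r(x)$: any permutation of them again multiplies to zero. Transposing the last two factors yields $s\,p(x)\,r(x)\,q(x)=0$ in $R[x;\sigma]$.

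Finally I would read off coefficients. The equation $s\,p(x)\,r(x)\,q(x)=0$ says that $s$ annihilates, on the left, every coefficient of $p(x)r(x)q(x)$. As this holds for every $s\in S$, each such coefficient lies in $r_R(S)=I$, whence $\overline{p(x)}\,\overline{r(x)}\,\overline{q(x)}=\bar 0$ in $(R/I)[x;\bar\sigma]$. This is exactly the symmetry condition required of $(R/I)[x;\bar\sigma]$, so $R/I$ is strongly $\bar\sigma$-symmetric.

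The appeal of this route is that it never expands a product in $R[x;\sigma]$: the noncommutative twist $xr=\sigma(r)x$ is handled ``for free,'' because I only invoke the abstract permutation property of the symmetric ring $R[x;\sigma]$ together with the annihilator structure of $I$. The one delicate point, and the main (if modest) obstacle, is the opening reduction via reversibility: one must pass to a single set $S$ that annihilates $I$ on a \emph{fixed} side, so that the same $S$ simultaneously produces the equation $s\,p\,q\,r=0$ by left multiplication and recognizes the resulting coefficients of $p\,r\,q$ as landing back in $I$. Were one to work directly with a left annihilator $I=l_R(S)$ and multiply on the right, the twist $\sigma^t(s)$ would intervene, and one would then need Lemma~\ref{Lemma1-23.10.18}(3) to convert $d_t s=0$ into $d_t\sigma^t(s)=0$; reducing to the right-annihilator case up front avoids this altogether.
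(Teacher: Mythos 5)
Your proof is correct and follows essentially the same route as the paper's: write $I=r_R(S)$, multiply $p(x)q(x)r(x)$ on the left by $s\in S$ (no $\sigma$-twist arises for a constant) to get a genuine zero product in the symmetric ring $R[x;\sigma]$, permute the last two of the four factors via Lambek's result, and read the coefficients of $p(x)r(x)q(x)$ back into $r_R(S)=I$. The only minor differences are that the paper treats the left-annihilator case ``similarly'' and invokes Shin's lemma (IFP implies one-sided annihilators are two-sided ideals) to justify that $R/I$ is a ring, whereas your reduction $l_R(S)=r_R(S)$ via reversibility handles both of these points at once.
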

\begin{proof}
Set $I=r_R(S)$ for some $S\subseteq R$. We write $\bar R=R/I$ and $\bar r=r+I$ for $r\in R$.
We have $R$ is symmetric and so has IFP since $R$ is strongly $\sigma$-symmetric. By \cite[Lemma 1.2]{S}, $I$ is an ideal of $R$. Let $\bar p(x)=\sum\limits_{i=0}^m \bar a_i x^i$, $\bar q(x)= \sum\limits_{j=0}^n\bar b_jx^j$ and $\bar r(x)=\sum\limits_{k=0}^l \bar c_k x^k\in \bar R[x;\bar\sigma]$ with $\bar p(x) \bar
q(x) \bar r(x)= \bar 0$. Then $p(x)q(x)r(x)\in I[x;\sigma]$ and hence, $Sp(x)q(x)r(x)=0$. Thus $Sp(x)r(x)q(x)=0$ since $R$ is strongly $\sigma$-symmetric and this implies that $\bar p(x) \bar r(x) \bar q(x) = \bar 0$. Therefore, $R/I$ is strongly $\bar\sigma$-symmetric.
The left annihilator case can be proved similarly.
\end{proof}

As a kind of converse of Proposition \ref{factor}, we obtain the following result.

\begin{proposition}\label{factor2}
Let $R$ be a ring with an endomorphism $\sigma$ and $I$ be a $\sigma$-ideal of $R$. If $R/I$ is a strongly $\bar\sigma$-symmetric ring and $I$ is a $\sigma$-rigid ring without identity, then $R$ is strongly $\sigma$-symmetric.
\end{proposition}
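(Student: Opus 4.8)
The plan is to verify the defining condition of Definition~\ref{def} for $R$, namely that $R[x;\sigma]$ is symmetric: given $p,q,r\in R[x;\sigma]$ (I suppress the variable for readability) with $pqr=0$, I must deduce $prq=0$. The first move is to reduce modulo $I$. The coefficientwise map $R[x;\sigma]\to (R/I)[x;\bar\sigma]$ is a ring epimorphism with kernel $I[x;\sigma]$, and by hypothesis $(R/I)[x;\bar\sigma]$ is symmetric (Definition~\ref{def}); hence $\overline{prq}=\bar 0$, that is, $prq\in I[x;\sigma]$. Since $I$ is $\sigma$-rigid, $I[x;\sigma]$ is reduced (the without-identity version of the equivalence recalled right after Definition~\ref{def} between $\sigma$-rigidity and reducedness of the skew polynomial ring; that argument does not use the identity). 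Consequently it will suffice to show that $prq$ is nilpotent, and I will in fact establish $(prq)^2=0$.

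Before attacking $(prq)^2$ I would first upgrade $R[x;\sigma]$ to a reversible, and then to an \emph{IFP}, ring, using the reduced subring $I[x;\sigma]$ as a lever. Because $(R/I)[x;\bar\sigma]$ is symmetric it is reversible and \emph{IFP}. For reversibility: if $fg=0$ in $R[x;\sigma]$, then $gf\in I[x;\sigma]$ by reversibility of the quotient, while $(gf)^2=g(fg)f=0$; as $I[x;\sigma]$ is reduced this gives $gf=0$. For the \emph{IFP}: if $fg=0$, then for every $h$ one has $fhg\in I[x;\sigma]$ by the \emph{IFP} of the quotient and $(fhg)^2=fh(gf)hg=0$ by the reversibility just established, whence $fhg=0$. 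Thus $R[x;\sigma]$ is \emph{IFP}.

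The crux is then a two-step insertion. Writing the hypothesis as $p\,(qr)=0$ and inserting $r$ via the \emph{IFP} gives $p\,r\,(qr)=prqr=0$, i.e. $(prq)\,r=0$. Applying the \emph{IFP} a second time, now to the product $(prq)\,r=0$, and inserting $p$, yields $(prq)\,p\,r=prqpr=0$. Grouping the six factors of $(prq)^2=prq\cdot p\,r\,q$ as $(prqpr)\,q$ then gives $(prq)^2=0$, and reducedness of $I[x;\sigma]$ forces $prq=0$, which is exactly the symmetry required of $R[x;\sigma]$.

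\textbf{Main obstacle.} The delicate point is that landing $prq$ inside $I[x;\sigma]$ is, by itself, not enough: reversibility and the \emph{IFP} lift freely through a reduced ideal, but from $pqr=0$ they only reproduce the \emph{cyclic} rearrangements $qrp,\ rpq$, and a symmetric ring is strictly stronger than a reversible \emph{IFP} ring. Concretely, the word $prqprq$ shares no contiguous block with any product that the quotient forces to vanish, so no single factor-insertion collapses $(prq)^2$ directly, and a naive ``reduced ideal plus symmetric quotient'' argument stalls here. The resolution, and the genuinely new step, is to promote $R[x;\sigma]$ to an \emph{IFP} ring first and then perform the two successive insertions above, which manufacture the factorization $(prqpr)\,q$ whose left factor already equals $0$; the remaining work is routine bookkeeping with the reduced subring $I[x;\sigma]$.
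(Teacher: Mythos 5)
Your proposal is correct, and it takes a genuinely different route through the middle of the argument than the paper does. Both proofs share the same frame: reduce modulo $I$ to get $prq\in I[x;\sigma]$, use $\sigma$-rigidity of $I$ to make $I[x;\sigma]$ reduced, and finish by exhibiting $prq$ as a nilpotent element of that reduced ring. But the paper never lifts reversibility or IFP to $R[x;\sigma]$: instead it invokes Lambek's permutation theorem (\cite[Proposition 1]{L}) in the symmetric quotient to place \emph{all} permutation products of $p,q,r$ inside $I[x;\sigma]$, gets $qrp=0$ from $(qrp)^2=qr(pqr)p=0$, and then runs a chain of multiplications and reversals inside $I[x;\sigma]$ that terminates in $(prq)^3=0$. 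Your argument replaces this computation with two structural lemmas --- $R[x;\sigma]$ is reversible, then IFP, each lifted from the quotient through the reduced kernel --- after which the two insertions $h=r$ and $h=p$ give $prqpr=0$, hence $(prq)^2=0$. Your route is more modular, reaches nilpotency exponent $2$ rather than $3$, and cleanly sidesteps a delicate point in the paper's computation: the paper at one stage flips a product $r\cdot s=0$ with $s\in I[x;\sigma]$ but $r\notin I[x;\sigma]$, which strictly requires exactly the $(sr)^2=s(rs)r$ trick that your lifting lemmas package once and for all. The paper's route, in exchange, works only with the three given polynomials and outsources the combinatorics to Lambek's theorem rather than proving auxiliary closure properties of $R[x;\sigma]$. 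Your diagnosis of the obstacle is also accurate: reversible plus IFP is strictly weaker than symmetric, so the reduced kernel must be invoked one final time, and both proofs do exactly that.
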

\begin{proof}
Let $p(x)q(x)r(x)=0$ for $p(x), q(x), r(x)\in R[x;\sigma]$. Then  $\bar p(x) \bar q(x) \bar r(x)= \bar 0$ and since $R/I$ is strongly $\bar{\sigma}$-symmetric, we have $\bar p(x) \bar r(x) \bar q(x)= \bar 0$. Thus $p(x)r(x)q(x) \in I[x,\sigma]$. By \cite[Proposition 3]{HKK}, we have that $I[x;\sigma]$ is reduced and hence, symmetric. Also, by \cite[Proposition 1]{L}, we get that
all the possible products of $p(x), q(x)$ and $r(x)$ is in $I[x;\sigma]$.
Then we obtain
$$(q(x)r(x)p(x))^{2}=q(x)r(x)[p(x)q(x)r(x)]p(x)=0$$
and so $q(x)r(x)p(x)=0$ since $I[x;\sigma]$ is reduced. Thus
\begin{align}
0=&p(x)r(x)[q(x)r(x)p(x)]r(x)q(x)\notag\\
 =&[p(x)r(x)q(x)][r(x)p(x)r(x)q(x)]\notag\\
 =&[r(x)p(x)r(x)q(x)][p(x)r(x)q(x)]\label{2}
\end{align}
by using the fact that $I[x;\sigma]$ is symmetric and so reversible.
If we multiply (\ref{2}) on the right hand side by $p(x)$, then we obtain
\begin{align}
0=&r(x)[p(x)r(x)q(x)p(x)r(x)q(x)p(x)]\notag\\
 =&[p(x)r(x)q(x)p(x)r(x)q(x)p(x)]r(x) \label{5}.
\end{align}
If we multiply (\ref{5}) on the right hand side by $q(x)$, then we get $(p(x)r(x)q(x))^{3}=0$ and hence, $p(x)r(x)q(x)=0$. Therefore, $R$ is strongly $\sigma$-symmetric.
\end{proof}

As a consequence of Propositon~\ref{factor} and Proposition~\ref{factor2}, we can give the following corollary.

\begin{corollary}
(1) \cite[Proposition 3.5]{HKKL} Let $R$ be a symmetric ring and $I$ be an ideal of $R$. If $I$ is an annihilator in $R$, then $R/I$ is symmetric.\\
(2) \cite[Proposition 3.6(1)]{HKKL} Let $R$ be a ring and $I$ be a proper ideal of $R$. If $R/I$ is symmetric and $I$ is reduced (as a ring without identity), then $R$ is symmetric.
\end{corollary}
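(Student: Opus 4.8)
The plan is to read the two assertions as the \emph{degree-zero} (constant-polynomial) instances of Propositions~\ref{factor} and \ref{factor2}, taken with $\sigma=1_R$. I want to flag immediately why this is not a blind substitution: ``strongly $1_R$-symmetric'' means that the \emph{polynomial} ring is symmetric, which is strictly stronger than symmetric, so I cannot simply quote the two propositions with $\sigma=1_R$. Instead I would rerun each proof, but only for constant $p,q,r\in R$, and observe that in that range every appeal to the strongly $\sigma$-symmetric hypothesis degenerates into an appeal to ordinary symmetry, which is exactly what is given.

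For part (1), write the annihilator ideal as $I=r_R(S)$ (the case $I=l_R(S)$ being identical). Starting from $a,b,c\in R$ with $\bar a\bar b\bar c=\bar 0$ in $R/I$, I would translate this to $abc\in I$, hence $s\,abc=0$ for every $s\in S$. Applying Lambek's permutation characterisation \cite[Proposition 1]{L} to the vanishing four-fold product $s\,a\,b\,c$ gives $s\,a\,c\,b=0$, so $acb\in r_R(S)=I$ and therefore $\bar a\bar c\bar b=\bar 0$. That is the symmetry of $R/I$, and nothing beyond the symmetry of $R$ has been used.

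For part (2), I would take $a,b,c\in R$ with $abc=0$ and aim at $acb=0$, transcribing the proof of Proposition~\ref{factor2} with $(p,q,r)$ replaced by $(a,b,c)$. Passing to $R/I$ and using the symmetry of $R/I$ together with \cite[Proposition 1]{L}, all six triple products of $a,b,c$ lie in $I$; in particular $acb,\,bca\in I$. Since $I$ is reduced (as a ring without identity) it is symmetric, hence reversible, and I can run the nilpotency cascade of Proposition~\ref{factor2}: from $abc=0$ one gets $(bca)^2=bc\,(abc)\,a=0$, so $bca=0$ by reducedness; substituting this for the vanishing factor $qrp$ yields $(acb)(cacb)=0$ with both factors in $I$, whence $(cacb)(acb)=0$ by reversibility of $I$; finally, multiplying on the right successively by $a$ and by $b$ produces $(acb)^3=0$, so $acb=0$ again by reducedness. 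Hence $R$ is symmetric.

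I expect the only real obstacle to be this last leg of part (2): upgrading the membership $acb\in I$ (all that the symmetry of $R/I$ delivers) to the exact equality $acb=0$. The bridge is the reducedness of $I$ applied to the nilpotency identities, and two bookkeeping points need care. First, the long triple products must be bracketed so that a known vanishing factor ($abc=0$ or $bca=0$) sits in the middle. Second, at a right-multiplication step a bare factor $c$ lies outside $I$, so reversibility of $I$ does not apply directly there; instead I would use the squaring observation that if $c\,y=0$ with $y\in I$, then $(yc)^2=y(c\,y)c=0$ and $yc\in I$, so $yc=0$ by reducedness. With these points handled, both parts are routine transcriptions of the two propositions to degree zero.
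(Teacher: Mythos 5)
Your proof is correct, but it is worth comparing with what the paper actually does, because the two differ in an instructive way. The paper's entire proof of this corollary is the sentence preceding it: the statement is declared a consequence of Proposition~\ref{factor} and Proposition~\ref{factor2}. As you rightly point out, that is not a literal substitution: taking $\sigma=1_R$ in those propositions produces hypotheses and conclusions about $R[x]$ being symmetric (``strongly $1_R$-symmetric''), which is strictly stronger than symmetry of $R$ --- indeed polynomial rings over symmetric rings need not be symmetric by \cite[Example 3.1]{HKKL} --- so the corollary is really a ``the same proof works in degree zero'' claim rather than a formal consequence of the two propositions. Your rerun of both arguments with constant polynomials is the rigorous way to cash that in, and both legs check out: in part (1), $s\,abc=0$ for all $s\in S$ plus Lambek's permutation property \cite[Proposition 1]{L} gives $s\,acb=0$, hence $acb\in r_R(S)=I$; in part (2), the cascade $(bca)^2=bc(abc)a=0$, then $(acb)(cacb)=0$ with both factors in $I$, then reversibility and reducedness of $I$, reproduces the proof of Proposition~\ref{factor2} at the ring level. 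Moreover, your squaring observation --- if $cy=0$ with $y\in I$, then $(yc)^2=y(cy)c=0$ and $yc\in I$, so $yc=0$ --- is not mere bookkeeping: it repairs a step that is also slippery in the paper's own proof of Proposition~\ref{factor2}, where the passage from (\ref{2}) to (\ref{5}) converts $r(x)\cdot y=0$ into $y\cdot r(x)=0$ although $r(x)$ need not lie in $I[x;\sigma]$, so reversibility of $I[x;\sigma]$ does not apply verbatim there either. In short: same underlying mathematics, but your version is self-contained and closes a gap that the paper's citation-style derivation leaves open.
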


Note that Proposition \ref{factor2} is false without the assumption ''$I$ is a $\sigma$-rigid ring without identity'' by the following example.

\begin{example}
{\rm  Consider a ring $R=U_2(F)$, where $F$ is a division ring and $\sigma$ is an automorphism of $R$
defined by  $\sigma \left( \left(\begin{array}{rr} a
& b\\0 & c\end{array} \right) \right) = \left(\begin{array}{rr}a & -b\\
0 & c\end{array} \right).$ Then  $R$ is not Abelian and so $R$ is not strongly $\sigma$-symmetric. The $\sigma$-ideal
$ I= \left(\begin{array}{rr}F& F\\0 & 0 \end{array} \right)$ of $R$ is not $\sigma$-rigid as a ring without identity. Indeed, $A\sigma(A)=0$, but $A\neq0$ for $A=\left( \begin{array}{rr}0& 1\\0 & 0\end{array} \right) \in I$. Also, note that the factor ring $R/I\cong F$ is reduced and $\bar{\sigma}$ is the identity map on $R/I$. Thus $R/I$ is  $\bar{\sigma}$-rigid and hence, strongly $\bar{\sigma}$-symmetric.  }
\end{example}

Let $R$ be an algebra over a commutative ring $S$. Following \cite{DO}, the Dorroh extension of $R$ by $S$ is the abelian group $D=R\oplus S$ with multiplication given by $(r_1,s_1)(r_2,s_2)=(r_1r_2+s_1r_2+s_2r_1,s_1s_2)$, where $r_{i}\in R$ and $s_{i}\in S$. For an $S$-algebra homomorphism $\sigma$ of $R$, $\sigma$ can be extended to an $S$-algebra homomorphism  $\bar{\sigma}:D\rightarrow D$ defined by $\bar{\sigma}((r,s))=(\sigma(r),s)$.

\begin{theorem}\label{Dorroh}
Let $R$ be an algebra over a commutative domain $S$ and $\sigma$ be an $S$-algebra homomorphism of $R$. Then $R$ is strongly $\sigma$-symmetric if and only if the Dorroh extension $D$ of $R$ by $S$ is strongly $\bar{\sigma}$-symmetric.
\end{theorem}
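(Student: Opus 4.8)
The plan is to embed $D$, together with the endomorphism $\bar\sigma$, into the direct product $R\times S$ equipped with $\sigma\times\mathrm{id}_S$, and then to transfer the symmetric property of the associated skew polynomial ring along this embedding, rather than to grind through the Dorroh multiplication coefficient by coefficient. The converse direction is immediate: the assignment $r\mapsto(r,0)$ identifies $R$ with the subring $\{(r,0)\mid r\in R\}$ of $D$, which is closed under $\bar\sigma$ and on which $\bar\sigma$ restricts to $\sigma$; hence $R[x;\sigma]$ is (isomorphic to) a subring of $D[x;\bar\sigma]$, and since the symmetric property passes to subrings, strong $\bar\sigma$-symmetry of $D$ forces strong $\sigma$-symmetry of $R$.

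For the forward direction I would introduce two maps out of $D$. Let $\phi:D\rightarrow S$ be the projection $\phi((a,s))=s$, which is a unital ring homomorphism, and let $\Psi:D\rightarrow R$ be defined by $\Psi((a,s))=a+s\cdot 1_R$, where $s\cdot 1_R$ denotes the image of $s$ under the structure map $S\rightarrow Z(R)$. The first key verification is that $\Psi$ is a unital ring homomorphism: using that $s\cdot 1_R$ is central in $R$ and that the Dorroh product is $(a,s)(b,t)=(ab+sb+ta,st)$, both $\Psi((a,s)(b,t))$ and $\Psi((a,s))\Psi((b,t))$ reduce to $ab+(s\cdot 1_R)b+(t\cdot 1_R)a+(st)\cdot 1_R$. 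Because $\sigma$ is an $S$-algebra homomorphism, we have $\sigma(s\cdot 1_R)=s\cdot 1_R$, and one checks that both maps intertwine the endomorphisms, namely $\Psi\circ\bar\sigma=\sigma\circ\Psi$ and $\phi\circ\bar\sigma=\phi$.

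Next I would combine these into $(\Psi,\phi):D\rightarrow R\times S$, which is injective, since $(a+s\cdot 1_R,s)=(0,0)$ forces $s=0$ and then $a=0$. As $(\Psi,\phi)$ is a ring homomorphism intertwining $\bar\sigma$ with $\sigma\times\mathrm{id}_S$, it extends coefficientwise to an injective ring homomorphism $D[x;\bar\sigma]\hookrightarrow(R\times S)[x;\sigma\times\mathrm{id}_S]\cong R[x;\sigma]\times S[x]$, where the last identification simply records that the skew rule $x(a,s)=(\sigma(a),s)x$ splits across the two factors. Now $R$ is strongly $\sigma$-symmetric by hypothesis, so $R[x;\sigma]$ is symmetric, while $S$ is a commutative domain, so $S[x]$ is a domain and hence symmetric; therefore $R[x;\sigma]\times S[x]$ is symmetric by Lemma~\ref{product} (or directly, since a finite direct product of symmetric rings is symmetric). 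Since the symmetric property is inherited by subrings, $D[x;\bar\sigma]$ is symmetric, which is exactly the assertion that $D$ is strongly $\bar\sigma$-symmetric.

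The main obstacle I anticipate is verificational rather than conceptual: confirming that $\Psi$ is multiplicative and unital, and that $(\Psi,\phi)$ genuinely commutes with the endomorphisms, depends squarely on $s\cdot 1_R$ being central in $R$ and on $\sigma$ being $S$-linear with $\sigma(1_R)=1_R$; one must also be careful that the identification $(R\times S)[x;\sigma\times\mathrm{id}_S]\cong R[x;\sigma]\times S[x]$ respects the skew multiplication. Once these routine checks are settled, the transfer of symmetry along the injective $\bar\sigma$-equivariant embedding is automatic, and the proof avoids any direct manipulation of triple products in $D[x;\bar\sigma]$.
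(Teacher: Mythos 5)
Your proof is correct under the paper's standing convention that $R$ has an identity, and it takes a genuinely different route from the paper's. The paper argues computationally: it expands the triple product $p(x)q(x)r(x)=0$ in $D[x;\bar\sigma]$, projects the coefficients onto $S$ to get $p'(x)q'(x)r'(x)=0$ in $S[x]$, invokes the hypothesis that $S$ is a domain to conclude one of $p',q',r'$ vanishes, and then (say when $p'=0$) rewrites the Dorroh products via the $S$-action as a single identity $p''(x)q''(x)r''(x)=0$ in $R[x;\sigma]$ with $q''=\sum(c_j+d_j)x^j$, $r''=\sum(e_k+f_k)x^k$, applies strong $\sigma$-symmetry of $R$, and translates back. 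You instead exploit the structural fact that, because $1_R$ exists, the map $(a,s)\mapsto(a+s\cdot 1_R,s)$ gives a $\bar\sigma$-equivariant embedding (in fact an isomorphism) $D\hookrightarrow R\times S$ intertwining $\bar\sigma$ with $\sigma\times\mathrm{id}_S$, so that $D[x;\bar\sigma]$ sits inside $R[x;\sigma]\times S[x]$, and symmetry follows from closure of the symmetric property under finite products and subrings (Lemma~\ref{product} plus the paper's remark on $\sigma$-stable subrings); your verifications of multiplicativity, equivariance, and the splitting of the skew structure are all sound. What each approach buys: yours is shorter, conceptual, and actually shows the domain hypothesis on $S$ is superfluous in this unital setting ($S[x]$ is commutative, hence symmetric, whether or not $S$ is a domain), whereas the paper's coefficientwise argument genuinely uses the domain hypothesis but does not rely on the splitting $D\cong R\times S$ at all --- so it is the proof that survives in the traditional Dorroh setting where $R$ need not have an identity (the situation the construction was invented for), where your map $\Psi$ is undefined and $D$ is not a direct product. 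It is worth flagging this dependence on $1_R$ explicitly if you present your argument as a replacement for the paper's.
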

\begin{proof} It is enough to show that the Dorroh extension $D$ is strongly $\bar{\sigma}$-symmetric when $R$ is strongly $\sigma$-symmetric. Let $p(x)q(x)r(x)=0$, where $p(x)=\sum\limits_{i=0}^m (a_i,b_i)x^i$, $q(x)= \sum\limits_{j=0}^n(c_j,d_j)x^j$ and $r(x)=\sum\limits_{k=0}^l (e_k,f_k)x^k\in D[x;\bar\sigma]$. Then we have
\begin{align}\label{DorrohEq1}
0 = & \bigg( \sum\limits_{s=0}^{m+n}\bigg( \sum\limits_{s=i+j}(a_i,b_i)\bar{\sigma}^{i}(c_j,d_j)x^{s}\bigg)\bigg)\bigg(\sum\limits_{k=0}^{l}(e_k,f_k)x^k\bigg)\notag \\
=&\sum\limits_{t=0}^{m+n+l}\bigg(\sum\limits_{t=s+k}\bigg(\sum\limits_{s=i+j}(a_i,b_i)\bar{\sigma}^{i}(c_j,d_j)\bar{\sigma}^{s}(e_k,f_k)\bigg)\bigg)x^t \notag\\
= & \sum\limits_{t=0}^{m+n+l}\bigg(\sum\limits_{t=s+k}\bigg(\sum\limits_{s=i+j}(a_i,b_i)(\sigma^{i}(c_j),d_j)(\sigma^{s}(e_k),f_k)\bigg)\bigg)x^t.
\end{align}
Let $p'(x)=\sum\limits_{i=0}^{m}b_i x^i$, $q'(x)= \sum\limits_{j=0}^n d_j x^j$ and $r'(x)=\sum\limits_{k=0}^l f_k x^k \in S[x]$. Then by (\ref{DorrohEq1}), we get $p'(x)q'(x)r'(x)=0$. Since $S[x]$ is a domain, either $p'(x)=0$ or $q'(x)=0$ or $r'(x)=0$.
Let $p'(x)=0$. If we use the facts that $R$ is an $S$-algebra and $\sigma^{i}(s)=s$ for each $s\in S$ and $i\in \mathbb{N}$, then (\ref{DorrohEq1}) becomes
\begin{center}
$
\begin{array}
[c]{lll}%
0&= & \sum\limits_{t=0}^{m+n+l}\bigg(\sum\limits_{t=s+k}\bigg(\sum\limits_{s=i+j}(a_{i},0)(\sigma^{i}%
(c_{j}),d_{j})(\sigma^{s}(e_{k}),f_{k})\bigg)\bigg)x^{t}\\
&= & \sum\limits_{t=0}^{m+n+l}\bigg(\sum\limits_{t=s+k}\bigg(\sum\limits_{s=i+j}(a_{i}\sigma^{i}%
(c_{j})+d_{j}a_{i},0)(\sigma^{s}(e_{k}),f_{k})\bigg)\bigg)x^{t}\\
&= & \sum\limits_{t=0}^{m+n+l}\bigg(\sum\limits_{t=s+k}\bigg(\sum\limits_{s=i+j}(a_{i}\sigma^{i}%
(c_{j})\sigma^{s}(e_{k})+d_{j}a_{i}\sigma^{s}(e_{k})+f_{k}a_{i}\sigma
^{i}(c_{j})+f_{k}d_{j}a_{i},0)\bigg)\bigg)x^{t}\\
&= & \sum\limits_{t=0}^{m+n+l}\bigg(\sum\limits_{t=s+k}\bigg(\sum\limits_{s=i+j}(a_{i}\sigma^{i}%
(c_{j})\sigma^{s}(e_{k})+a_{i}\sigma^{i}(d_{j})\sigma^{s}(e_{k}%
)+a_{i}\sigma^{i}(c_{j})\sigma^{s}(f_{k})+a_{i}\sigma^{i}(d_{j})\sigma^{s}(f_{k}),0)\bigg)\bigg)x^{t}.
\end{array}
$
\end{center}
Let $p''(x)=\sum\limits_{i=0}^{m}a_i x^i$, $q''(x)= \sum\limits_{j=0}^n (c_{j}+d_j)x^j$ and $r''(x)=\sum\limits_{k=0}^l (e_k+f_k)x^k \in R[x;\sigma]$. Then $p''(x)q''(x)r''(x)=0$. Since $R$ is strongly $\sigma$-symmetric, we have $p''(x)r''(x)q''(x)=0$ and this implies that $p(x)r(x)q(x)=0$. For the cases $q'(x)=0$ and $r'(x)=0$, the proof can be seen by using similar arguments.
\end{proof}

By Theorem~\ref{Dorroh}, we obtain a generalization of the following corollary.

\begin{corollary}(\cite[Proposition 4.2 (2)]{HKKL})
Let $R$ be an algebra over a commutative ring $S$, and $D$ be the Dorroh extension of $R$ by $S$. If $R$ is symmetric and $S$ is a domain, then $D$ is symmetric.
\end{corollary}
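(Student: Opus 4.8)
The plan is to run the argument of Theorem~\ref{Dorroh} in the degenerate case of constant (degree-zero) polynomials, with $\sigma=1_R$. At degree zero the only property of the base ring that actually gets invoked is the element-level swap $abc=0\Rightarrow acb=0$, i.e.\ ordinary symmetry of $R$; the full ``strongly symmetric'' hypothesis of Theorem~\ref{Dorroh} is not needed, and the cited \cite[Proposition 4.2 (2)]{HKKL} drops out as the degree-zero shadow of that theorem.

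Concretely, I would take $\alpha=(a_1,b_1)$, $\beta=(a_2,b_2)$, $\gamma=(a_3,b_3)\in D$ with $\alpha\beta\gamma=0$ and aim to show $\alpha\gamma\beta=0$. The first step is to read off the $S$-component of $\alpha\beta\gamma$, namely $b_1b_2b_3$; since $S$ is a commutative domain, $b_1b_2b_3=0$ forces one of $b_1,b_2,b_3$ to vanish, which splits the argument into three cases. The $S$-component of the target product is $b_1b_3b_2=b_1b_2b_3=0$ automatically, so it remains only to control the $R$-component.

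The second step is the key reduction. In the case $b_1=0$, expanding $\alpha\beta\gamma=0$ and using that $S$ is central in the $S$-algebra $R$ (so each scalar commutes past every factor and scalars multiply commutatively), the $R$-component of $\alpha\beta\gamma$ is exactly $a_1\,(a_2+b_2)\,(a_3+b_3)$, where $a_2+b_2$ and $a_3+b_3$ are viewed through the $S$-action. Thus $a_1(a_2+b_2)(a_3+b_3)=0$, and symmetry of $R$ yields $a_1(a_3+b_3)(a_2+b_2)=0$, which unwinds to precisely the $R$-component of $\alpha\gamma\beta$. The cases $b_2=0$ and $b_3=0$ are handled identically after absorbing the surviving scalar into the appropriate factor (forming $a_1+b_1$ and $a_3+b_3$, respectively $a_1+b_1$ and $a_2+b_2$). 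Combined with the first step, this gives $\alpha\gamma\beta=0$, so $D$ is symmetric.

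The only real work, and the step I expect to be the main obstacle, is identifying in each of the three cases the correct ``combined'' elements of $R$ and verifying the $S$-algebra bookkeeping that repackages the expanded $R$-component as a single triple product to which element-level symmetry of $R$ can be applied; one must also take care that the scalars enter only through the $S$-module structure, since $R$ need not be unital.
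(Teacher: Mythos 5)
Your proof is correct and is essentially the paper's own approach: the paper obtains this corollary as the degree-zero instance of Theorem~\ref{Dorroh}, whose proof is precisely your argument (split into cases according to which $S$-component vanishes, rewrite the $R$-component as a triple product such as $a_1(a_2+b_2 1_R)(a_3+b_3 1_R)$, and apply symmetry of $R$). Your closing remarks are also exactly right --- since ``symmetric'' does not imply ``strongly $\sigma$-symmetric'', the corollary really comes from the theorem's proof specialized to constant polynomials rather than from its bare statement, and the paper's blanket assumption that $R$ has an identity is what makes the combined elements $a_i+b_i 1_R$ genuine elements of $R$ to which symmetry can be applied.
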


In the following result, we give a criteria for transfer strongly $\sigma$-symmetric property from one ring to another.

\begin{proposition}
Let $\alpha:R\rightarrow S$ be a ring isomorphism. Then $R$ is a strongly $\sigma$-symmetric ring if and only if $S$ is a strongly $\alpha\sigma\alpha^{-1}$-symmetric ring.
\end{proposition}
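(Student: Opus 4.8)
The plan is to reduce the statement to the fact that the property ``symmetric'' is invariant under ring isomorphisms, by lifting $\alpha$ to an isomorphism between the two relevant skew polynomial rings. Write $\tau=\alpha\sigma\alpha^{-1}$; since $\alpha$ is a ring isomorphism and $\sigma$ is an endomorphism of $R$, the map $\tau$ is an endomorphism of $S$, so the skew polynomial ring $S[x;\tau]$ is defined and the notion ``strongly $\tau$-symmetric'' makes sense. By Definition~\ref{def}, $R$ is strongly $\sigma$-symmetric precisely when $R[x;\sigma]$ is symmetric, and $S$ is strongly $\tau$-symmetric precisely when $S[x;\tau]$ is symmetric. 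Thus it suffices to produce a ring isomorphism $R[x;\sigma]\cong S[x;\tau]$.

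First I would define $\bar\alpha:R[x;\sigma]\to S[x;\tau]$ coefficientwise by $\bar\alpha\big(\sum_i a_i x^i\big)=\sum_i \alpha(a_i)x^i$. Additivity and bijectivity are immediate: $\bar\alpha$ is clearly additive, and it is invertible with inverse given coefficientwise by $\alpha^{-1}$ (which lifts the endomorphism relation in the same way, since $\alpha^{-1}\tau\alpha=\sigma$). The one point requiring verification is that $\bar\alpha$ is multiplicative, and by distributivity this reduces to checking compatibility with the skew commutation relation on generators.

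The main (and only genuine) step is this check. In $R[x;\sigma]$ the defining relation is $xr=\sigma(r)x$, while in $S[x;\tau]$ it is $xs=\tau(s)x$. Applying $\bar\alpha$ to the left-hand side gives $\bar\alpha(xr)=\bar\alpha(\sigma(r)x)=\alpha(\sigma(r))x$, whereas $\bar\alpha(x)\bar\alpha(r)=x\,\alpha(r)=\tau(\alpha(r))x=\alpha\sigma\alpha^{-1}(\alpha(r))x=\alpha(\sigma(r))x$. The two expressions agree, so $\bar\alpha$ respects the twisted multiplication and is therefore a ring isomorphism. I expect this compatibility computation to be the crux of the argument, though it is short; the conjugation $\tau=\alpha\sigma\alpha^{-1}$ is exactly what is needed to make the relation transport correctly.

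Finally I would invoke the elementary fact that symmetry is preserved under ring isomorphism: if $abc=0$ implies $acb=0$ in one ring, the same holds in any isomorphic ring, since an isomorphism transports the identity $\bar\alpha(a)\bar\alpha(b)\bar\alpha(c)=\bar\alpha(abc)$ and preserves zero. Hence $R[x;\sigma]$ is symmetric if and only if $S[x;\tau]$ is symmetric, which by Definition~\ref{def} is exactly the assertion that $R$ is strongly $\sigma$-symmetric if and only if $S$ is strongly $\alpha\sigma\alpha^{-1}$-symmetric, completing the proof.
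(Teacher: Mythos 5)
Your proof is correct and takes essentially the same approach as the paper: the paper's entire proof is the one-line observation that $\alpha$ induces a ring isomorphism from $R[x;\sigma]$ to $S[x;\alpha\sigma\alpha^{-1}]$, which is exactly the map $\bar\alpha$ you construct. Your write-up simply makes explicit what the paper leaves implicit — the coefficientwise definition, the check of the commutation relation $xr=\sigma(r)x$ under conjugation, and the transport of the symmetric property along the isomorphism.
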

\begin{proof}
It follows from the fact that an isomorphism $\alpha:R\rightarrow S$ induces an isomorphism from $R[x;\sigma]$ to $S[x;\alpha\sigma\alpha^{-1}]$.
\end{proof}

We denote the $n\times n$ full matrix ring over $R$ (resp., upper triangular matrix ring) by $Mat_{n}(R)$~(resp., $U_{n}(R)$) for $n\geq 2$. Also consider the following subrings of $Mat_n(R)$
$$D_n(R)=\left \{\left( \begin{array}{ccccc} a & a_{12} &a_{13}&\cdots& a_{1n}\\
0 & a&a_{23}&\cdots &a_{2n}\\ 0 & 0&a&\cdots &a_{3n}\\ \vdots
&\vdots&\vdots&\ddots&\vdots\\ 0&0&0&\cdots &a\end{array}  \right)
\mid a, a_{ij}\in R \text{ , } i=1,\ldots,n \text{ and } j=2,\ldots, n \right\}$$ and  $$V_n(R)=\{(a_{ij})\in
D_n(R)\mid a_{ij}=a_{(i+1)(j+1)}\text{ for }i=1, \ldots, n-2\text{
and }j=2, \ldots, n-1\}.$$ Note that $V_n(R) \cong R[x]/(x^n)$, where  $(x^n)$  is the ideal  of $R[x]$ generated by $x^n$.
Let $\sigma$ be an endomorphism of $R$. Then $\sigma$ can be extended to an endomorphism of $D_{n}(R)$ (resp., $V_{n}(R)$) defined by $\bar{\sigma}((a_{ij}))=(\sigma(a_{ij}))$. We use the same notation $\bar \sigma$ for the extension endomorphism of $D_{n}(R)$ and $V_{n}(R)$.

It is known that $Mat_{n}(R)$ and $U_{n}(R)$ do not have IFP for $n\geq 2$ since $U_2(R)$ is not Abelian. Also note that by \cite[Example 1.3]{KL}, $D_{n}(R)$ does not have IFP for $n\geq 4$. Thus $Mat_{n}(R)$ and $U_{n}(R)$  are not strongly $\bar{\sigma}$-IFP and so are not strongly $\bar{\sigma}$-symmetric for $n\geq 2$. Similarly, $D_{n}(R)$ is not strongly $\bar{\sigma}$-symmetric for $n\geq 4$. In \cite[Proposition 3.7]{BHKKL}, it is proved that $D_2(R)$ and $D_3(R)$ are strongly $\bar{\sigma}$-IFP whenever $R$ is a $\sigma$-rigid ring. So, it is natural to ask whether $D_2(R)$ and $D_3(R)$ are strongly $\bar{\sigma}$-symmetric whenever $R$ is a $\sigma$-rigid ring.
In the following example, we eliminate the case for $D_3(R)$.
\begin{example}\rm Note that $D_{3}(\mathbb{R})[x;\bar{\sigma}]\cong D_{3}(\mathbb{R}[x;\sigma])$.
Consider $p=\begin{pmatrix}
1 & x^3 & 0 \\
0&  1 & 0 \\
0 & 0 & 1
\end{pmatrix}$, $q=\begin{pmatrix}
0 & 3x^4 & x^5 \\
0&  0 & x \\
0 & 0 & 0
\end{pmatrix}$ and $r=\begin{pmatrix}
0 & 2x^2 & 3x \\
0&  0 & 0 \\
0 & 0 & 0
\end{pmatrix}$ in $D_{3}(\mathbb{R})[x;\bar{\sigma}]$. Then we have $pqr=0$, but $prq \neq 0$. Thus $D_{3}(\mathbb{R})$ is not strongly $\bar{\sigma}$-symmetric.
\end{example}

In the next theorem, we prove that there exists a subring of $n\times n$ upper triangular matrix ring over a strongly $\sigma$-symmetric ring which is strongly $\bar{\sigma}$-symmetric.
\begin{theorem}\label{mrxextension}
Let $R$ be a ring and $\sigma$ be an endomorphism of $R$. If $R$ is $\sigma$-rigid, then $V_{n}(R)$ is strongly $\bar{\sigma}$-symmetric.
\end{theorem}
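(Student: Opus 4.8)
The plan is to peel off the skew, matrix and truncation features one at a time, ending with a coefficientwise Armendariz-type computation over a reduced ring. First I would record the ring isomorphism
$$V_{n}(R)[y;\bar\sigma]\cong V_{n}\big(R[y;\sigma]\big),$$
which is the $n$-dimensional analogue of the isomorphism $D_{3}(\mathbb{R})[x;\bar\sigma]\cong D_{3}(\mathbb{R}[x;\sigma])$ used in the example above: a skew polynomial $\sum_i A_i y^i$ is sent to the matrix whose $(p,q)$-entry is $\sum_i (A_i)_{pq}y^i\in R[y;\sigma]$, and since $\bar\sigma$ acts entrywise by $\sigma$ and $ya=\sigma(a)y$, checking that products match is routine. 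Writing $T:=R[y;\sigma]$, the ring $T$ is reduced because $R$ is $\sigma$-rigid, and reduced rings are symmetric. As $V_{n}(T)\cong T[x]/(x^{n})$, the theorem is reduced to proving that \emph{$T[x]/(x^{n})$ is symmetric for every reduced ring $T$}.

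Next I would translate the symmetry of $T[x]/(x^{n})$ into a statement about coefficients. Given $A=\sum_i a_ix^i$, $B=\sum_j b_jx^j$ and $C=\sum_k c_kx^k$ in $T[x]/(x^{n})$ with $ABC=0$, vanishing in each degree says
\begin{equation}\label{convolution}
\sum_{i+j+k=m}a_ib_jc_k=0 \qquad (0\le m\le n-1).
\end{equation}
What we want is the companion family $\sum_{i+j+k=m}a_ic_kb_j=0$, i.e. $ACB=0$. Because $T$ is reduced and hence symmetric, each single relation $a_ib_jc_k=0$ already forces $a_ic_kb_j=0$; so it is enough to prove the truncated three-factor Armendariz property: if $ABC=0$, then $a_ib_jc_k=0$ for every $i+j+k\le n-1$.

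I would establish this by induction on the total degree $s=i+j+k$, using the standard arsenal of a reduced ring: by Lambek's characterization of symmetric rings \cite{L} every permutation of a zero product is again zero; the insertion-of-factors property gives $abc=0\Rightarrow asbtc=0$ for all $s,t$; and $z^{2}=0\Rightarrow z=0$. The base case $s=0$ is just $a_0b_0c_0=0$. For the inductive step, to isolate a fixed product $a_{i_0}b_{j_0}c_{k_0}$ with $i_0+j_0+k_0=s$, I would multiply the degree-$s$ instance of \eqref{convolution} on the left by $c_{k_0}$ and on the right by $a_{i_0}b_{j_0}$: a summand with $k>k_0$ then contains the three-coefficient sub-product $c_{k_0}a_ib_j$ (a permutation of $a_ib_jc_{k_0}$, of total index $<s$) and a summand with $k<k_0$ contains $c_ka_{i_0}b_{j_0}$ (a permutation of $a_{i_0}b_{j_0}c_k$, of total index $<s$); both vanish by the inductive hypothesis together with permutation-invariance, leaving only the terms with $k=k_0$. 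Iterating with multipliers adapted to the remaining indices then pins down $i_0$ and $j_0$ as well, after which $z^{2}=0\Rightarrow z=0$ yields $a_{i_0}b_{j_0}c_{k_0}=0$. Once all these products vanish, symmetry of $T$ gives $ACB=0$, so $T[x]/(x^{n})\cong V_{n}(T)$ is symmetric and the theorem follows.

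The main obstacle is precisely this extraction step. A single left/right multiplication pins down only one of the three indices, so one must organize a nested choice of multipliers and verify that \emph{every} discarded summand genuinely acquires a three-coefficient sub-product of total index below $s$ that the induction kills; managing this combinatorial bookkeeping cleanly — rather than the invocations of reducedness, which are entirely standard — is where the real work lies.
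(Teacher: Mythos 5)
You follow essentially the same route as the paper: both arguments pass through the isomorphism $V_n(R)[y;\bar\sigma]\cong V_n(R[y;\sigma])$, use $\sigma$-rigidity only through the fact that $T=R[y;\sigma]$ is reduced (hence symmetric, reversible and IFP), and reduce the theorem to showing that a zero triple product in $T[x]/(x^n)$ forces all coefficient products $a_ib_jc_k$ with $i+j+k\le n-1$ to vanish; the paper's displayed system of equations for the tuples $(p_1,\ldots,p_n)$ is exactly your convolution family. The difference is that the paper carries the extraction out in full --- right-multiplying each equation by suitable $q_jr_k$ and peeling off summands in decreasing order of the first index, using reducedness to kill everything else --- whereas your proposal stops at the decisive point: after eliminating the summands with $k\neq k_0$ you appeal to ``iterating with multipliers adapted to the remaining indices'' and explicitly leave that bookkeeping unverified. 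As written, the inductive step is asserted rather than proved, so there is a genuine gap at the heart of the argument.

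The gap does close, and in fact with your own multipliers in a single stroke, so your diagnosis that ``a single left/right multiplication pins down only one of the three indices'' is mistaken. Consider a surviving summand $c_{k_0}a_ib_jc_{k_0}a_{i_0}b_{j_0}$ with $i+j=s-k_0$ and $(i,j)\neq(i_0,j_0)$. If $i>i_0$, then $j<j_0$, and the consecutive block $b_jc_{k_0}a_{i_0}$ is a permutation of $a_{i_0}b_jc_{k_0}$, which is zero by the inductive hypothesis (total index $i_0+j+k_0<s$) and \cite[Proposition 1]{L}; hence the summand vanishes. If $i<i_0$, the inductive hypothesis gives $a_ib_{j_0}c_{k_0}=0$ (total index $i+j_0+k_0<s$), hence $c_{k_0}a_ib_{j_0}=0$ by permutation, and IFP (inserting $b_jc_{k_0}a_{i_0}$ between $c_{k_0}a_i$ and $b_{j_0}$) kills the summand. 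So only the target term survives, giving $(c_{k_0}a_{i_0}b_{j_0})^2=0$, whence $c_{k_0}a_{i_0}b_{j_0}=0$ by reducedness and then $a_{i_0}b_{j_0}c_{k_0}=0$ by reversibility. With this paragraph inserted your proof is complete, and it is essentially the paper's proof with two-sided instead of one-sided multipliers. Alternatively, observe that your reduction says precisely that the theorem follows from the statement ``$T[x]/(x^n)$ is symmetric whenever $T$ is reduced'', which is the known result \cite[Theorem 2.3]{HKKL}; citing it would finish your argument at once, at the cost of hiding the computation the paper wants to exhibit.
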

\begin{proof}
Firstly, note that $V_{n}(R)[x;\bar{\sigma}]\cong V_{n}(R[x;\sigma])$ and so   every $p\in V_{n}(R)[x;\bar{\sigma}]$ can be expressed in the form $p=(p_1,p_2,\ldots,p_n)$ for some $p_i \in R[x;\sigma]$.
% Then every polynomial $p(x)=\sum\limits_{u=0}^{m}A_ux^u \in
%V_n(R)[x;\bar\alpha]$ can be expressed by the form of
%$$  (p_{1},p_{2}, \ldots, p_{n})$$
%where  $A_u=(a_{i}^u)\in V_n(R)$ for any $0\leq u\leq m$ and $p_{i}=\sum\limits_{u=0}^m a_{i}^u x^u \in R[x;\alpha]$ for any $1\leq i \leq n$.
Suppose that $p(x)q(x)r(x)=0$, where
$p(x)=\sum\limits_{i=0}^{m}A_ix^i = (p_{1},p_{2}, \ldots, p_{n}),
  ~q(x)=\sum\limits_{j=0}^{n}B_jx^j = (q_{1},q_{2},\ldots, q_{n})$ and $r(x)=\sum\limits_{k=0}^{l}C_kx^k = (r_{1},r_{2},\ldots, r_{n})\in V_n(R)[x;\bar\alpha]$,
for $A_i=(a_{st}^{(i)})$, $B_j=(b_{uv}^{(j)})$, $C_{k}=(c_{zw}^{(k)}) \in V_n(R)$ for $1\leq u,s,t,u,v,z,w \leq n$.
Then we have the following equalities in $R[x;\sigma]$:
\begin{align}
p_1q_1r_1=&0 \label{mrxss1} \\
p_1q_1r_2+p_1q_2r_1+p_2q_1r_1=&0 \label{mrxss2}\\
p_1q_1r_3+p_1q_2r_2+p_1q_3r_1+p_2q_1r_2+p_2q_2r_1+p_3q_1r_1=&0 \label{mrxss3}\\
\vdots & \notag \\
p_1q_1r_{n-1}+p_1q_{2}r_{n-2}+\ldots+p_1q_{n-1}r_{1}+\ldots+p_{n-1}q_{1}r_1=&0 \label{mrxss4} \\
p_1q_{1}r_{n}+p_1q_2r_{n-1}+\ldots+p_1q_{n}r_1+\ldots+p_{n}q_{1}r_1=&0 \label{mrxss5}
\end{align}
Since $R$ is $\sigma$-rigid, $R[x;\sigma]$ is reduced and so $p(x)q(x)=0$ implies $p(x)R[x;\sigma]q(x)=0$ and $q(x)p(x)=0$. Also $p(x)q(x)^2=0$ implies $p(x)q(x)=0$ for any $p(x),q(x)\in R[x;\sigma]$. We will use these facts in the following procedure without any reference.
By (\ref{mrxss1}), we have $p_1r_1q_1=0$ and $r_1p_1q_1=0$.
If we multiply (\ref{mrxss2}) on the right hand side by $q_1r_1$, then we get $p_2q_1r_1=0$ and so $p_2r_1q_1=0$. Now, (\ref{mrxss2}) becomes
\begin{equation}
\begin{array}[c]{l}
p_1q_1r_2+p_1q_2r_1=0.
\end{array}
\label{mrxss6}
\end{equation}
If we multiply (\ref{mrxss6}) on the right hand side by $q_2r_1$, we have $p_1q_2r_1=0$ and thus $p_1q_1r_2=0$. Hence, we obtain $p_1r_1q_2=0$ and $p_1r_2q_1=0$.
Similarly, if we multiply (\ref{mrxss3}) on the right hand side by $q_1r_1$, then we get $p_3q_1r_1=0$ and $p_3r_1q_1=0$. So, (\ref{mrxss3}) becomes
\begin{equation}
\begin{array}[c]{l}
p_1q_1r_3+p_1q_2r_2+p_1q_3r_1+p_2q_1r_2+p_2q_2r_1=0.
\end{array}
\label{mrxss7}
\end{equation}
If we multiply (\ref{mrxss7}) on the right hand side by $q_2r_1$, then we get $p_2q_2r_1=0$ and $p_2r_1q_2=0$.
Hence, (\ref{mrxss7}) becomes
\begin{equation}
\begin{array}[c]{l}
p_1q_1r_3+p_1q_2r_2+p_1q_3r_1+p_2q_1r_2=0.
\end{array}
\label{mrxss8}
\end{equation}
If we multiply (\ref{mrxss8}) on the right hand side by $q_1r_2$, $q_3r_1$, $q_2r_2$ respectively, we obtain $p_2q_1r_2=0$, $p_1q_3r_1=0$, $p_1q_2r_2=0$ and $p_1q_1r_3=0$.
Inductively, assume that $p_iq_jr_k=0$, where $1\leq i, j, k \leq n-1$ and $i+j+k=n+1$ for $n\geq 2$. If we multiply (\ref{mrxss5}) on the right hand side by $q_1r_1$, we get $p_nq_1r_1=0$ and $p_nr_1q_1=0$. Then (\ref{mrxss5}) becomes
\begin{equation}
\begin{array}[c]{l}
p_1q_{1}r_{n}+p_1q_2r_{n-1}+\ldots+p_1q_{n}r_1+\ldots +p_{n-1}q_2r_1=0.
\end{array}
\label{mrxss9}
\end{equation}
If we multiply (\ref{mrxss9}) on the right hand side by $q_2r_1$, we have $p_{n-1}q_2r_1=0$ and $p_{n-1}r_1q_2=0$.
Continuing this procedure, by multiplying the equation by the appropriate $q_jr_k$ on the right hand side, we get $p_iq_jr_k=0$ and hence, $p_ir_kq_j=0$, where $1\leq i, j, k \leq n$ such that $i+j+k=n+2$.
Consequently, we get $p(x)r(x)q(x)=0$ and therefore, $V_{n}(R)$ is strongly $\bar\sigma$-symmetric.
\end{proof}

By Theorem~\ref{mrxextension}, one may conjecture that if a ring $R$ is strongly $\sigma$-symmetric, then $V_{n}(R)$ is strongly $\bar\sigma$-symmetric. But the following example eliminates this possibility.

\begin{example} \rm \label{trrnotsym}
Consider the ring
$$R=\left\{ \left(\begin{array}{cc} a & \bar{b} \\ 0 & a \\ \end{array} \right)~\mid~a\in \mathbb{Z},\ \bar{b}\in \mathbb{Z}_{4}\right\}$$
and let $\sigma$ be an endomorphism defined by
$\sigma\left(\left(\begin{array}{cc} a & \bar{b}\\ 0 & a \\\end{array}\right)\right)=\left(\begin{array}{cc} a & -\bar{b}\\0 & a\\\end{array}\right)$. By Example~\ref{nonidentity}, we know that $R$ is strongly $\sigma$-symmetric. For
$$A=\left( \begin{array}{cc}
    \left(\begin{array}{cc} 1 & \bar0 \\ 0 & 1 \\ \end{array} \right)
   &\left(\begin{array}{cc} 0 & \bar0 \\ 0 & 0 \\ \end{array} \right)\\
    \left(\begin{array}{cc} 0 & 0 \\ 0 & 0 \\ \end{array} \right)
   &\left(\begin{array}{cc} 1 & \bar0 \\ 0 & 1 \\ \end{array} \right)
    \end{array}\right),
  B=\left( \begin{array}{cc}
    \left(\begin{array}{cc} 0 & \bar1 \\ 0 & 0 \\ \end{array} \right)
   &\left(\begin{array}{cc} -1 & \bar1 \\ 0 & -1 \\ \end{array} \right)\\
    \left(\begin{array}{cc} 0 & 0 \\ 0 & 0 \\ \end{array} \right)
   &\left(\begin{array}{cc} 0 & \bar1 \\ 0 & 0 \\ \end{array} \right)
    \end{array}\right),
  C=\left( \begin{array}{cc}
    \left(\begin{array}{cc} 0 & \bar1 \\ 0 & 0 \\ \end{array} \right)
   &\left(\begin{array}{cc} 1 & \bar1 \\ 0 & 1 \\ \end{array} \right)\\
    \left(\begin{array}{cc} 0 & 0 \\ 0 & 0 \\ \end{array} \right)
   &\left(\begin{array}{cc} 0 & \bar1 \\ 0 & 0 \\ \end{array} \right)
    \end{array}\right)x$$
in $V_2(R)[x;\bar\sigma]$ we have $ABC=0$, but
$$ACB=\left( \begin{array}{cc}
    \left(\begin{array}{cc} 0 & \bar0 \\ 0 & 0 \\ \end{array} \right)
   &\left(\begin{array}{cc} 0 & \bar2 \\ 0 & 0 \\ \end{array} \right)\\
    \left(\begin{array}{cc} 0 & 0 \\ 0 & 0 \\ \end{array} \right)
   &\left(\begin{array}{cc} 0 & \bar0 \\ 0 & 0 \\ \end{array} \right)
    \end{array}\right)\neq 0.$$
Thus $V_{2}(R)$ is not strongly $\bar\sigma$-symmetric.
\end{example}

\begin{corollary}
Let $R$ be a ring and $\sigma$ be an endomorphism of $R$. If $R$ is $\sigma$-rigid, then $R[x]/(x^n)$ is strongly $\bar{\sigma}$-symmetric, where $(x^n)$ denotes the ideal of $R[x]$ generated by $x^n$.
\end{corollary}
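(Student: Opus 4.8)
The plan is to reduce this corollary to Theorem~\ref{mrxextension} by means of the ring isomorphism $V_{n}(R)\cong R[x]/(x^{n})$ already recorded in the text preceding the definitions of $D_{n}(R)$ and $V_{n}(R)$. First I would recall this isomorphism explicitly: to a matrix in $V_{n}(R)$ with constant diagonal entry $a_{0}$, first superdiagonal entry $a_{1}$, and so on up to the top-right corner entry $a_{n-1}$, associate the coset $a_{0}+a_{1}x+\cdots+a_{n-1}x^{n-1}+(x^{n})$. This is a ring isomorphism because the Toeplitz-type multiplication in $V_{n}(R)$ mirrors polynomial multiplication modulo $x^{n}$.

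Next I would verify that this isomorphism intertwines the two endomorphisms both denoted $\bar{\sigma}$: on $V_{n}(R)$ the map $\bar{\sigma}$ applies $\sigma$ to every matrix entry, while on $R[x]/(x^{n})$ it is induced by the coefficient-wise extension $\bar{\sigma}(\sum a_{i}x^{i})=\sum\sigma(a_{i})x^{i}$ of $\sigma$ to $R[x]$. Since both send each coefficient $a_{i}$ to $\sigma(a_{i})$, the isomorphism $\alpha\colon V_{n}(R)\to R[x]/(x^{n})$ satisfies $\alpha\bar{\sigma}=\bar{\sigma}\alpha$, that is, $\alpha\bar{\sigma}\alpha^{-1}=\bar{\sigma}$.

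Finally I would invoke Theorem~\ref{mrxextension}: since $R$ is $\sigma$-rigid, $V_{n}(R)$ is strongly $\bar{\sigma}$-symmetric. Combining this with the earlier proposition that a ring isomorphism $\alpha$ carries a strongly $\sigma$-symmetric ring to a strongly $\alpha\sigma\alpha^{-1}$-symmetric ring, and using the equivariance $\alpha\bar{\sigma}\alpha^{-1}=\bar{\sigma}$, I conclude that $R[x]/(x^{n})$ is strongly $\bar{\sigma}$-symmetric. The only point requiring care, and the nearest thing to an obstacle, is precisely the $\bar{\sigma}$-equivariance of $\alpha$: without it the transfer proposition would merely yield the strongly $\alpha\bar{\sigma}\alpha^{-1}$-symmetric property for a conjugated endomorphism rather than for $\bar{\sigma}$ itself. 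This equivariance, however, is immediate from the coefficient-wise description of both maps, so the corollary follows at once.
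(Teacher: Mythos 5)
Your proposal is correct and is essentially the paper's own argument: the paper proves the corollary in one line by citing the isomorphism $R[x]/(x^n)\cong V_n(R)$ together with Theorem~\ref{mrxextension}. Your additional verification that the isomorphism intertwines the two endomorphisms denoted $\bar{\sigma}$ (so that the transfer proposition yields strong $\bar{\sigma}$-symmetry rather than symmetry for a conjugate endomorphism) merely makes explicit what the paper treats as clear.
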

\begin{proof}
It is clear since $R[x]/(x^n)\cong V_n(R)$.
\end{proof}

For a given ring $R$ and an $(R,R)$-bimodule $M$, {\em the trivial extension} of $R$ by $M$ is the ring $T(R,M)=R\oplus M$ with the usual addition and the multiplication
\begin{equation*}
(r_1,m_1)(r_2,m_2)=(r_1r_2, r_1m_2+m_1r_2).
\end{equation*}
This is isomorphic to the ring of all matrices
$\begin{pmatrix} r & m\\ 0 & r \end{pmatrix}$, where $r\in R$ and $m\in M$ with the usual matrix operations. Let $\sigma$ be an endomorphism of $R$. We can extend $\sigma$ to an endomorphism $\bar{\sigma}: T(R,R)\rightarrow T(R,R)$ defined by $\bar\sigma\bigg(\begin{pmatrix}
a & b\\ 0& a \end{pmatrix}\bigg)=\bigg(\begin{pmatrix}
\sigma(a) & \sigma(b)\\ 0&\sigma(a) \end{pmatrix}\bigg)$ for $a,b\in R$. Also note that $T(R,0)\cong R$.

\begin{corollary} \label{trr}
Let $R$ be a ring and $\sigma$ be an endomorphism of $R$. If $R$ is $\sigma$-rigid, then the trivial extension $T(R,R)$ of $R$ is strongly $\bar\sigma$- symmetric.
\end{corollary}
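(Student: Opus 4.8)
The plan is to realize the trivial extension $T(R,R)$ as the ring $V_2(R)$ and then invoke Theorem~\ref{mrxextension}. First I would recall that $T(R,R)$ is, by definition, isomorphic to the ring of all matrices $\begin{pmatrix} r & m \\ 0 & r \end{pmatrix}$ with $r,m\in R$, under the usual matrix operations. This is exactly $D_2(R)$, the ring of upper triangular $2\times 2$ matrices with equal diagonal entries. On the other hand, in the definition of $V_n(R)$ the constraint $a_{ij}=a_{(i+1)(j+1)}$ is imposed only for the index range $i=1,\ldots,n-2$, which is vacuous when $n=2$; hence $V_2(R)=D_2(R)$. Therefore $T(R,R)\cong V_2(R)$ as rings.

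Next I would check that this isomorphism is compatible with the endomorphisms. The extension $\bar\sigma$ on $T(R,R)$ sends $\begin{pmatrix} a & b \\ 0 & a \end{pmatrix}$ to $\begin{pmatrix} \sigma(a) & \sigma(b) \\ 0 & \sigma(a) \end{pmatrix}$, which is precisely the entrywise action $(a_{ij})\mapsto(\sigma(a_{ij}))$ that defines $\bar\sigma$ on $V_2(R)$. Thus the isomorphism $T(R,R)\cong V_2(R)$ intertwines the two copies of $\bar\sigma$, so that the transfer result for ring isomorphisms established above carries the strongly $\bar\sigma$-symmetric property from one ring to the other.

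Finally, since $R$ is $\sigma$-rigid by hypothesis, Theorem~\ref{mrxextension} applied with $n=2$ shows that $V_2(R)$ is strongly $\bar\sigma$-symmetric, and hence so is $T(R,R)$. I do not expect any real obstacle here: the entire argument reduces to the identification $T(R,R)\cong V_2(R)$ together with the observation that the two endomorphism extensions agree under it, after which the conclusion is immediate from Theorem~\ref{mrxextension}.
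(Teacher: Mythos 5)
Your proposal is correct and follows exactly the paper's own argument: the paper proves this corollary in one line by noting $T(R,R)\cong V_2(R)$ and invoking Theorem~\ref{mrxextension}. Your additional checks—that the constraint defining $V_n(R)$ is vacuous for $n=2$ and that the isomorphism intertwines the two extensions of $\sigma$—are just explicit verifications of what the paper leaves implicit.
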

\begin{proof}
It is clear by Theorem~\ref{mrxextension}, since $T(R,R)\cong V_2(R).$
\end{proof}

Note that the converse of Corollary~\ref{trr} is not true by Example~\ref{skewarm}(2). By Theorem \ref{mrxextension}, we obtain a generalization of the following results.

\begin{corollary}\cite[Theorem 2.3]{HKKL} If $R$ is a reduced ring, then $R[x]/(x^n)$ is symmetric, where $(x^n)$ denotes the ideal of $R[x]$ generated by $x^n$.
\end{corollary}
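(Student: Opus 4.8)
The plan is to obtain this statement as an immediate specialization of Theorem~\ref{mrxextension} (equivalently, of the preceding corollary, which uses the isomorphism $R[x]/(x^n)\cong V_n(R)$) to the identity endomorphism $\sigma=1_R$. So I would not reprove anything from scratch; I would simply feed a reduced ring into the $\sigma$-rigid hypothesis with the right choice of $\sigma$ and then strip off the ``$\bar\sigma$'' decoration.

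First I would observe that a reduced ring is $1_R$-rigid. By definition, $\sigma$-rigidity for $\sigma=1_R$ requires that $a\,1_R(a)=a^2=0$ force $a=0$, and this is precisely the absence of nonzero nilpotent elements. Hence $R$ reduced yields that $R$ is $1_R$-rigid. Next I would apply Theorem~\ref{mrxextension} with $\sigma=1_R$ to conclude that $V_n(R)$ is strongly $\bar{1_R}$-symmetric. The point to notice here is that, since $\sigma=1_R$, the extended endomorphism $\bar\sigma$ of $V_n(R)$ defined by $\bar\sigma((a_{ij}))=(\sigma(a_{ij}))=(a_{ij})$ is itself the identity map of $V_n(R)$; consequently the skew polynomial ring $V_n(R)[y;\bar\sigma]$ coincides with the ordinary polynomial ring $V_n(R)[y]$, and strong $\bar{1_R}$-symmetry simply means $V_n(R)[y]$ is symmetric.

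Finally, since every strongly $\sigma$-symmetric ring is symmetric (as recorded immediately after Definition~\ref{def}), I would deduce that $V_n(R)$ is symmetric, and then invoke $R[x]/(x^n)\cong V_n(R)$ to transport symmetry back to $R[x]/(x^n)$. The main obstacle is essentially nonexistent: all the combinatorial work has already been done inside the annihilator-cancellation argument of Theorem~\ref{mrxextension}, and the only genuine checks are the harmless facts that a reduced ring is $1_R$-rigid and that the induced endomorphism $\bar\sigma$ degenerates to the identity, so that ``strongly $\bar\sigma$-symmetric'' collapses to ordinary symmetry.
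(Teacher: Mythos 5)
Your proposal is correct and is exactly the route the paper intends: it presents this corollary as an immediate consequence of Theorem~\ref{mrxextension} (together with $R[x]/(x^n)\cong V_n(R)$), obtained by specializing to $\sigma=1_R$, noting that reduced is precisely $1_R$-rigid, and then weakening ``strongly symmetric'' to ``symmetric.'' No gaps; your observations that rigidity for the identity endomorphism is equivalent to reducedness and that $\bar\sigma$ collapses to the identity are the only checks needed, and both are valid.
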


\begin{corollary}\cite[Corollary 2.4]{HKKL} Let $R$ be a reduced ring, then $T(R,R)$ is symmetric.

\end{corollary}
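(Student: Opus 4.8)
The plan is to obtain this corollary as a direct specialization of Corollary~\ref{trr} to the identity endomorphism $1_R$ of $R$. The first step is to record the elementary fact that a ring is reduced if and only if it is $1_R$-rigid. Indeed, $1_R$-rigidity asks that $a\,1_R(a)=a^2=0$ imply $a=0$; a reduced ring satisfies this trivially, and conversely the implication $a^2=0\Rightarrow a=0$ forces reducedness, since if $a^n=0$ with $n\geq 2$ taken minimal, then $b=a^{\lceil n/2\rceil}$ satisfies $b^2=a^{2\lceil n/2\rceil}=0$ while $b\neq 0$ (as $\lceil n/2\rceil<n$), contradicting $1_R$-rigidity. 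Thus a reduced ring $R$ is $\sigma$-rigid for $\sigma=1_R$.

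Next I would apply Corollary~\ref{trr} with this choice of $\sigma$: since $R$ is $1_R$-rigid, the trivial extension $T(R,R)$ is strongly $\overline{1_R}$-symmetric. Here $\overline{1_R}$ is the endomorphism of $T(R,R)$ extending $1_R$, and by its defining formula it fixes every matrix, so $\overline{1_R}$ is simply the identity endomorphism of $T(R,R)$. Then, since every strongly $\sigma$-symmetric ring is symmetric (noted immediately after Definition~\ref{def}, and again in Proposition~\ref{sym}), we conclude that $T(R,R)$ is symmetric, as claimed.

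The argument carries no real obstacle; the only genuine content is the observation that ``reduced'' coincides with ``$1_R$-rigid,'' which lets the rigidity hypothesis of Corollary~\ref{trr} absorb the reducedness hypothesis here. One should note only that the paper's running convention that $\sigma$ be non-identity is relaxed for this application, as we are taking $\sigma=1_R$; the notion of strongly $1_R$-symmetric ring is perfectly meaningful, and via the isomorphism $T(R,R)\cong V_2(R)$ it yields exactly the present statement.
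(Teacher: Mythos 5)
Your proposal is correct and follows exactly the route the paper intends: specialize Corollary~\ref{trr} (equivalently Theorem~\ref{mrxextension} with $n=2$) to $\sigma=1_R$, using that reduced coincides with $1_R$-rigid, and then pass from strongly symmetric to symmetric. Your explicit verification that reducedness is equivalent to $1_R$-rigidity, and your remark about relaxing the non-identity convention on $\sigma$, merely make precise what the paper leaves implicit.
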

\newpage

\noindent Fatma Kaynarca \\
Department of Mathematics, Afyon Kocatepe University,\\ Afyonkarahisar 03200,
Turkey \\ e-mail: {\tt fkaynarca@aku.edu.tr} \\

\noindent Melis Tekin Akcin \\
Department of Mathematics, Hacettepe University, \\ Ankara 06800,
Turkey \\ e-mail: {\tt hmtekin@hacettepe.edu.tr}

\end{document}